\documentclass{article}
\usepackage{fullpage}
\usepackage{amsmath, amssymb, amsthm}
\usepackage{mathtools,stmaryrd}
\usepackage{ifthen}
\usepackage[capitalise]{cleveref}
\usepackage{todonotes}
\usepackage{url}
\usepackage{subcaption}
\usepackage{physics2}
\usephysicsmodule{ab}
\usepackage{tikz}
\usetikzlibrary{calc,arrows,positioning}

\theoremstyle{definition}
\newtheorem{theorem}{Theorem}[section]

\newtheorem{lemma}[theorem]{Lemma}

\newtheorem{proposition}[theorem]{Proposition}
\newtheorem{definition}[theorem]{Definition}

\newtheorem{example}[theorem]{Example}

\newtheorem{remark}[theorem]{Remark}

\def\cF{{\cal F}}

\def\cM{{\cal M}}

\def\cR{{\cal R}}

\def\cU{{\cal U}}

\def\cX{{\cal X}}
\def\cY{{\cal Y}}
\def\cZ{{\cal Z}}
\def\bbZ{{\mathbb Z}}
\def\bfC{{\mathbf C}}

\def\bfK{{\mathbf K}}

\def\bfn{{\mathbf n}}

\newcommand{\timesdots}{\times\dots\times}

\DeclareMathOperator{\im}{im}
\DeclareMathOperator{\rank}{rank}
\newcommand{\Hom}{\mathrm{Hom}}

\DeclareMathOperator{\Tor}{Tor}

\newcommand{\Mor}{\mathrm{Mor}}
\newcommand{\Term}{\mathrm{Term}}

\newcommand{\Id}{\mathrm{id}}
\newcommand{\Ob}{\mathrm{Ob}}
\newcommand{\Op}{\mathrm{op}}

\newcommand{\cod}{\mathrm{cod}}
\newcommand{\catab}{\mathbf{Ab}}
\newcommand{\catset}{\mathbf{Set}}

\newcommand{\catfam}{\mathbf{Fam}}

\newcommand{\cattheories}{\mathbf{Law}}

\newcommand{\In}{\mathrm{in}}
\newcommand{\Pos}{\mathrm{Pos}}

\newcommand{\Var}{\mathrm{Var}}

\newcommand{\varlaw}{\mathbf{L}}

\newcommand{\varsorts}{S}

\newcommand{\varobj}[1][0]{%
  {\ifthenelse{\equal{#1}{0}}{\vec{X}}{%
    \ifthenelse{\equal{#1}{1}}{\vec{Y}}{\vec{Z}}%
  }}%
}
\newcommand{\varsort}[1][0]{%
  {\ifthenelse{\equal{#1}{0}}{{X}}{%
    \ifthenelse{\equal{#1}{1}}{{Y}}{{Z}}%
  }}%
}

\newcommand{\varsign}{\Sigma}

\newcommand{\legensub}{\preceq_{\mathrm{gs}}}
\newcommand{\ltgensub}{\prec_{\mathrm{gs}}}

\newcommand{\kahlerdiffs}{\Omega}

\makeatletter
\newcommand*{\relrelbarsep}{.386ex}
\newcommand*{\relrelbar}{%
  \mathrel{%
    \mathpalette\@relrelbar\relrelbarsep
  }%
}
\newcommand*{\@relrelbar}[2]{%
  \raise#2\hbox to 0pt{$\m@th#1\relbar$\hss}%
  \lower#2\hbox{$\m@th#1\relbar$}%
}
\providecommand*{\rightrightarrowsfill@}{%
  \arrowfill@\relrelbar\relrelbar\rightrightarrows
}
\providecommand*{\leftleftarrowsfill@}{%
  \arrowfill@\leftleftarrows\relrelbar\relrelbar
}
\providecommand*{\xrightrightarrows}[2][]{%
  \ext@arrow 0359\rightrightarrowsfill@{#1}{#2}%
}
\providecommand*{\xleftleftarrows}[2][]{%
  \ext@arrow 3095\leftleftarrowsfill@{#1}{#2}%
}
\makeatother
\newcommand{\Cr}{\mathrm{Cr}}
\newcommand{\syn}[1]{\mathrm{Syn}\ab(#1)}
\newcommand{\bfp}{\mathbf{p}}

\title{Anick Resolution for Lawvere Theories from\\ Algebraic Discrete Morse Theory}
\author{Mirai Ikebuchi}

\begin{document}

\maketitle
\begin{abstract}
Inspired by Brown's collapsing method (or discrete Morse theory) to obtain a free resolution of $\bbZ$ over the monoid ring $\bbZ M$, we apply algebraic discrete Morse theory to compute the homology groups of Lawvere theories, which is defined as Tor of a certain module. We reinterpret known partial free resolutions arising from complete term rewriting systems in terms of collapsing of the normalized bar resolution. This perspective yields homological inequalities that bound the number of equational axioms in presentations and recovers classical results, such as lower bounds for group axiomatizations. Our main contribution is to extend these resolutions to higher dimensions.
\end{abstract}

\section{Introduction}
In this paper, we apply algebraic discrete Morse theory to compute the (co)homology groups of Lawvere theories.
Before stating our results, we explain the background and motivation of this work.

For a monoid (or a group) \(M\) with a presentation \((\Gamma, R)\), there exists a partial free resolution of \(\bbZ\) as a left module over the monoid ring \(\bbZ M\):
\[
\bigoplus_R \bbZ M \to \bigoplus_\Gamma \bbZ M \to \bbZ M \to \bbZ \to 0.
\]
This implies the inequality
\begin{equation}\label{eqn:monoid_ineq}
\# R - \# \Gamma \ge s(\Tor_2^{\bbZ M}(\bbZ, \bbZ)) - \rank(\Tor_1^{\bbZ M}(\bbZ, \bbZ))
\end{equation}
where \(s(H)\) is the minimum number of generators of an abelian group \(H\), and \(\rank(H)\) is the torsion-free rank of \(H\).

Note that \(\bbZ M\) can be written as \(\bbZ\Gamma^*/I\) where \(\Gamma^*\) is the free monoid generated by \(\Gamma\) and \(I\) is the ideal of \(\bbZ \Gamma^*\) generated by \(U=\{u - v \mid (u,v) \in R\}\).
In \cite{anick86}, Anick showed that, given a minimal (noncommutative) Gr\"obner basis \(G\) of \(I\), there is a resolution of \(\bbZ\)
\[
\dots \to F_n \to \dots \to F_2 \to F_1 \to F_0 \to \bbZ \to 0
\]
where each \(F_n\) is generated by \emph{n-chains}, which are certain \(n\)-tuples of words over \(\Gamma\), and there are one-to-one correspondences between the set of \(0\)-chains and a singleton set, and the set of \(1\)-chains and \(\Gamma\), and the set of \(2\)-chains and \(G\).
If \(\Gamma\) and \(G\) are finite, for each \(n\), the set of \(n\)-chains is known to be finite, so, \(F_n\) is finitely generated.
Brown observed that if \(U\) is a minimal Gr\"obner basis, it provides a scheme to collapse cells of the normalized bar resolution without changing the homotopy type, and the collapsed complex induces Anick's resolution \cite{brown92}.
Brown's collapsing method is now known as \emph{discrete Morse theory} for CW complexes \cite{FORMAN98} and generalized to chain complexes of modules over arbitrary rings \cite{joellenbeck05,skoldberg06}.
From this viewpoint, we can call \eqref{eqn:monoid_ineq} a Morse inequality.

We consider an analogue of this story for Lawvere theories.
A (single-sorted) Lawvere theory is a small category with finite products such that every object is a finite power of a distinguished object.
The notion of Lawvere theories gives an abstract framework for universal algebra \cite{lawvere63},
and every Lawvere theory can be ``presented'' by \((\varsign, E)\) where \(\varsign\) is a set of operation symbols and \(E\) is a set of equational axioms built from symbols in \(\varsign\) and symbols for variables.
For two sets \(E,E'\) of equational axioms over \(\varsign\), if \(E\) and \(E'\) are logically equivalent, then \((\varsign, E)\) and \((\varsign, E')\) present isomorphic Lawvere theories.
(Details are given in \cref{sec:lawvere}.)

In \cite{jp06}, Jibladze and Pirashvili studied cohomology theory for Lawvere theories.
They showed that the two cohomology theories of Lawvere theories coincide: the Quillen cohomology, which is defined for general algebraic structures \cite{quillen2006homotopical}, and the Baues--Wirsching cohomology, which is defined for small categories \cite{baues85}, with a certain class of coefficients.
Moreover, they showed that for any Lawvere theory \(\varlaw\), there exists a ringoid \(\cU_\varlaw\) and a left \(\cU_\varlaw\)-module \(\kahlerdiffs_\varlaw\) such that the Quillen and Baues--Wirsching cohomology of \(\varlaw\) coincides with Ext of \(\kahlerdiffs_\varlaw\) over left \(\cU_\varlaw\)-modules.

After a decade, Malbos and Mimram showed that, if \(\varlaw\) has a presentation \((\varsign,E)\) such that \(E\) is a \emph{complete term rewriting system}, there is a partial free resolution
\begin{equation}\label{eqn:partial_res}
\cF_3 \to \cF_2 \to \cF_1 \to \cF_0 \to \kahlerdiffs_\varlaw \to 0
\end{equation}
where each \(\cF_i\) is free, \(\cF_1\) is generated by \(\varsign\), \(\cF_2\) is generated by \(E\), and \(\cF_3\) is generated by a finite set if \(\varsign,E\) are finite \cite{mm16}.

Also, the author showed \cite{ikebuchi22} that under a condition on the \emph{degree} \(d\) of \(E\), there exists a right \(\cU_\varlaw\)-module \(\cZ_d\) such that we have the inequality
\begin{equation}\label{eqn:main_inequality}
\# E - \# \varsign + 1 \ge s(H_2(\varlaw; \cZ_d)) - \rank(H_1(\varlaw; \cZ_d)) + \rank(H_0(\varlaw; \cZ_d)) \quad (H_n(\varlaw; \cZ_d) = \Tor_n^{\cU_\varlaw}(\cZ_d, \kahlerdiffs_\varlaw)).
\end{equation}
This inequality can be used to bound by below the sizes of sets of equational axioms.
For example, consider the Lawvere theory \(\varlaw = \varlaw_{\mathrm{grp}}\) presented by the set \(\varsign_0 = \{\_ \cdot\_, e, \_^{-1}\}\) of symbols and the set \(E_0\) of group axioms
\begin{align*}
x \cdot e \approx x, \quad e \cdot x \approx x, \quad x \cdot x^{-1} \approx e, \quad x^{-1} \cdot x \approx e,\quad (x \cdot y) \cdot z \approx x \cdot (y \cdot z).
\end{align*}
It is known that there is a complete term rewriting system \(E_1\) over \(\varsign_0\) equivalent to \(E_0\) \cite{Knuth83}.
By computing the lower dimensional homology groups of \(\varlaw_\mathrm{grp}\) using the resolution \eqref{eqn:partial_res}, we obtain (RHS of \eqref{eqn:main_inequality}) = 0.
Since the homology groups do not depend on the choice of presentations, we conclude that \(\# E \ge \# \varsign_0 - 1 = 2\) holds for any set \(E\) of equational axioms over \(\varsign_0\) equivalent to \(E_0\).
This is a new proof that there is no single equational axiom for groups over \(\varsign_0\), which is a classical result originally proved in some ways specific to the theory of groups \cite{t68,n86,k92}.

The goal of this paper is to extend the resolution \eqref{eqn:partial_res} to higher dimensions using algebraic discrete Morse theory.
The outline is as follows.
\begin{itemize}
	\item In Section 2, we review the definitions and basic properties of Lawvere theories and equational presentations.
	\item In Section 3, after an introduction of ringoids and modules over ringoids, we explain algebraic discrete Morse theory over ringoids.
	\item In Section 4, we first define the enveloping ringoid \(\cU_\varlaw\) of a Lawvere theory \(\varlaw\) and the \(\cU_\varlaw\)-module of K\"ahler differentials \(\kahlerdiffs_\varlaw\) following Jibladze and Pirashvili \cite{jp06}. Then, we construct the unnormalized bar resolution of \(\kahlerdiffs\), and define the normalized bar resolution using algebraic discrete Morse theory.
	\item In Section 5, we apply algebraic discrete Morse theory to the normalized bar resolution to obtain Anick resolution for Lawvere theories, which is our main result.
	\item In Section 6, we show Morse inequalities for Lawvere theories as an application of Anick resolution.
\end{itemize}

\section{Lawvere theories}\label{sec:lawvere}
%\begin{definition}
%Let $\varlaw$ be a category with finite products.
%A \emph{model} of $\varlaw$ is a product-preserving functor $\varlaw \to \catset$, and a \emph{morphism of models} is a natural transformation.
%We write $\Alg\varlaw$ for the category of models of $\varlaw$.
%\end{definition}
%
%\begin{definition}
%If a category $\varcat$ is equivalent to $\Alg\varlaw$ for some Lawvere theory $\varlaw$, we say that $\varcat$ is \emph{algebraic}.
%\end{definition}

%Let $\bfS^{\Op}$ be a full subcategory of $\catset$ with objects $\bfn = \ab\{1,\dots,n\}$ for $n=0,1,\dots$.
%(This $\bfS$ is called the \emph{Lawvere theory of sets}.)
%
Let $\bfn$ be the set $\ab\{1,\dots,n\}$ for $n=0,1,\dots$.
For a set $\varsorts$, let $\catfam_\varsorts^\Op$ be the full subcategory of $\catset/\varsorts$ with objects $f: \bfn \to \varsorts$ for $n=0,1,\dots$.
Note that $\catfam_\varsorts^\Op$ has finite coproducts $f_1 + f_2 : \bfn_1 + \bfn_2 \to \varsorts$ for $f_i : \bfn_i \to \varsorts$ ($i=1,2$), so $\catfam_\varsorts = (\catfam_\varsorts^\Op)^\Op$ has finite products.
We write $\varsorts^*$ for $\Ob(\catfam_\varsorts)$.

Note that any object $f : \bfn \to S$ can be thought of as a string $X_1\dots X_n$ over $S$ for $f(i) = X_i$, and the product of $X_1\dots X_{n_1}$ and $Y_1\dots Y_{n_2}$ is the concatenation $X_1\dots X_{n_1} Y_1\dots Y_{n_2}$.
A morphism $X_1\dots X_n \to Y_1\dots Y_m$ is a function $u : \mathbf{m} \to \mathbf{n}$ such that $X_{u(i)} = Y_i$ for each $i\in \mathbf{m}$.
Also, we can check that $X_1\dots X_n$ is isomorphic to any permutation $X_{\sigma(1)}\dots X_{\sigma(n)}$ where $\sigma : \bfn \to \bfn$ is a bijection.

\begin{definition}
\begin{itemize}
	\item For a set $\varsorts$, an $\varsorts$-\emph{sorted Lawvere theory} is a small category $\varlaw$ that has finite products together with a morphism $\iota : \catfam_\varsorts \to \varlaw$ of Lawvere theories such that the objects of $\varlaw$ are functions $\bfn \to \varsorts$ and $\iota$ is identity on objects.
We often call $\varlaw$ an $\varsorts$-sorted Lawvere theory without mentioning $\iota$.
	\item A morphism between $\varsorts$-sorted Lawvere theories $\iota : \catfam_\varsorts \to \varlaw$, $\iota' : \catfam_\varsorts \to \varlaw'$ is a functor $F : \varlaw \to \varlaw'$ that identity on objects, preserves products, and satisfy $F \circ \iota = \iota'$.
	\item The category of $\varsorts$-sorted Lawvere theories is denoted by $\cattheories_\varsorts$.
\end{itemize}
\end{definition}

Notice that giving \(\iota : \catfam_\varsorts \to \varlaw\) is the same as choosing, for each \(\vec X = X_1 \timesdots X_n \in S^*\), a product diagram in \(\varlaw\) whose factors are \(X_1,\dots,X_n\).
So, a category $\varlaw$ is an $\varsorts$-sorted Lawvere theory if $\Ob(\varlaw) = \Ob(\catfam_\varsorts)$ and if 
a product diagram \(\ab\{\pi^{\vec X}_i : \vec X\to C_i \}_{i \in \{1,\dots,n\}}\) in \(\varlaw\) is chosen for each \(\vec X = X_1 \timesdots X_k \in S^*\).
When we work on an \(\varsorts\)-sorted Lawvere theory \(\varlaw\), \(\pi^{\vec X}_i\) or just \(\pi_i\) denotes the projection in the chosen product diagram.
A functor between $\varsorts$-sorted Lawvere theories $\varlaw \to \varlaw'$ is a morphism of $\varsorts$-sorted Lawvere theories if it is a morphism of Lawvere theories and maps the chosen product diagrams in $\varlaw$ to those in $\varlaw'$.

\begin{remark}
For objects in an $S$-sorted Lawvere theory, we write $X,Y,\dots$ when they are sorts (elements in $S$, not $S^*$) and write $\vec X,\vec Y,\dots$ for general elements in $S^*$.
Also, for morphisms, we write $f,g,\dots$ when their codomains are sorts (i.e., $\cod(f),\cod(g)\in S$) and write $\vec f,\vec g,\dots$ for general morphisms.
\end{remark}

We now introduce the notion of equational presentations of an \(\varsorts\)-sorted Lawvere theory.

\begin{definition}
Let $\varsorts$ be a set of sorts and $V_X$ be an infinite set of \emph{variable symbols of sort $X$} for each sort $X \in \varsorts$.
Let $V = \coprod_{X \in \varsorts} V_X$.
\begin{itemize}
	\item An $\varsorts$-\emph{sorted signature} is a set $\varsign$ (of \emph{operation symbols}) together with a function $\alpha : \varsign \to \varsorts^* \times \varsorts$.
If $\alpha(f) = (X_1\dots X_n,X)$ for $f \in \varsign$, we write $f : X_1\timesdots X_n \to X$.
	\item A \emph{term of sort} $X$ over $\varsign$ and $V$ is defined inductively as follows. (i) Any variable symbol $x \in V$ of sort $X$ is a term of sort $X$. (ii) If $f : X_1\timesdots X_n \to X$ and $t_1,\dots,t_n$ are terms of sorts $X_1,\dots,X_n$, respectively, then the formal expression $f(t_1,\dots,t_n)$ is a term of sort $X$.
	\item $\Term_\varsign(V)$ denotes the set of all terms over $\varsign$ and $V$, and \(\Term_\varsign^X(V)\) denotes the set of all terms of sort \(X\) over \(\varsign\) and \(V\).
	\item For a term \(t\), \(\Var(t)\) is the set of variables occurring in \(t\).
	\item A finite list of distinct variables $x_1,\dots ,x_n$ is called a \emph{context} (or \emph{sorting context}). We often write a context as $x_1:X_1,\dots,x_n:X_n$ where $X_i$ is a sort of $x_i$.
	\item For a term $t$ and a context $x_1:X_1,\dots,x_n:X_n$ such that $\Var(t)\subseteq \{x_1,\dots,x_n\}$, we call the formal expression $x_1:X_1,\dots,x_n:X_n \vdash t$ a \emph{term-in-context}.
	\item For a term \(t\) whose variables are \(x_1:\varsort_1,\dots,x_n:\varsort_n\) and terms \(t_1,\dots,t_n\) of sorts \(\varsort_1,\dots,\varsort_n\), respectively, we write \(t[t_1/x_1,\dots,t_n/x_n]\) for the term obtained from \(t\) by replacing each variable \(x_i\) with \(t_i\).
	\item For a sort \(\varsort \in \varsorts\), a term \(C \in \Term_\varsign(V\cup\{\square_{\varsort}\})\) is called a \emph{rewrite context} if \(\square_{\varsort}\) occurs exactly once in \(C\) where \(\square_{\varsort}\) is a fresh symbol called the \emph{hole} and has sort \(\varsort\).
	\item For a rewrite context \(C\) with hole \(\square_X\) of sort \(X\) and a term \(t\) of sort \(X\), we write \(C[t]\) for the term obtained from \(C\) by replacing \(\square_X\) with \(t\).
	\item An \emph{equation} is a pair $(t_1,t_2)$ of two terms of a same sort. An equation is written as $t_1 \approx t_2$.
	\item A pair $(\varsign, E)$ of a signature and a set of equations is called an \emph{($\varsorts$-sorted) equational presentation}.
\end{itemize}
\end{definition}
\begin{example}[Abelian groups]\label{ex:abelian}
Let $\varsorts$ be a singleton set $\{X\}$ and $\varsign$ be $\{0,-,+\}$ with $\alpha(0) = (\epsilon, X)$, $\alpha(-) = (X,X)$, $\alpha(+) = (XX, X)$.
We write $t_1+t_2$ for the term $+(t_1,t_2)$.
The following set $E$ of equations presents the theory of abelian groups:
\[
x + 0 \approx x,\quad x+ (-x) \approx 0, \quad (x_1+ x_2)+ x_3 \approx x_1+ (x_2+ x_3), \quad x_1 + x_2 \approx x_2 + x_1
\]
where $x_1,x_2,x_3 \in V = V_X$.
\end{example}
\begin{example}[Left modules over a monoid]\label{ex:module}
Let $\varsorts' = \{X,Y\}$ and $\varsign' = \{0,-,+,1,\circ,\cdot\}$ with $\alpha'(1) = (\epsilon,Y)$ $\alpha'(\circ) = (YY,Y)$, $\alpha'(\cdot) = (YX,X)$, and $\alpha'$ is defined in the same way as in the previous example for $0,-,+$.
We write $t_1\circ t_2$ for $\circ(t_1,t_2)$ and $t_1\cdot t_2$ for the term $\cdot(t_1,t_2)$.
Then, the following set $E'$ of equations together with the equations in the previous example presents the theory of left modules over a monoid:
\begin{align*}
&1 \circ y \approx y,\quad y \circ 1 \approx y,\quad (y_1 \circ y_2)\circ y_3 \approx y_1 \circ (y_2 \circ y_3),\\
&y \cdot 0 \approx 0,\quad 1 \cdot x \approx x,\quad (y_1 \circ y_2)\cdot x \approx y_1\cdot (y_2\cdot x), \quad y\cdot (x_1 + x_2) \approx y\cdot x_1 + y\cdot x_2
\end{align*}
where $x,x_i \in V_X$ and $y,y_i \in V_Y$.
That is, the equations in the first line say that $\circ$ is the monoid multiplication with the unit $1$, and the equations in the second line are the laws for the scalar multiplication $y \cdot x$.
\end{example}

\begin{definition}\label{def:congruence}
We define the equivalence relation \(t \approx_E s\) between terms \(t,s \in \Term_\varsign(V)\) of the same sort as the smallest equivalence relation satisfying the following conditions:
\begin{enumerate}
	\item For any $t \approx s \in E$, $t \approx_E s$.
	\item Let $t$, $s$ be terms, $x_1,\dots,x_n$ be the variables that occur in $t$ or $s$ whose sorts are $X_1,\dots,X_n$, and $t_1,\dots,t_n$ be terms of sorts $X_1,\dots,X_n$, respectively.
If $t\approx_E s$, then $t[t_1/x_1,\dots,t_n/x_n] \approx_E s[t_1/x_1,\dots,t_n/x_n]$.
	\item For any rewrite context \(C\) with hole of sort \(X\) and terms \(t,t'\) of sort \(X\), if \(t \approx_E t'\), then \(C[t] \approx_E C[t']\).
\end{enumerate}
\end{definition}

We extend \(\approx_E\) to term-in-contexts as \(\Gamma \vdash t \approx_E \Delta \vdash s\) iff \(\Gamma = \Delta\) and \(t \approx_E s\).

Let \(\sim_\alpha\) be the equivalence relation between \(m\)-tuples of terms defined as follows: \((t_1,\dots,t_m) \sim_\alpha (t_1',\dots,t_m')\) iff, for \(\{x_1,\dots,x_n\} = \Var(t_1)\cup \dots \cup \Var(t_m)\) and \(\{y_1,\dots,y_k\} = \Var(t_1')\cup \dots \cup \Var(t_m')\), we have \(k = n\) and \(t_i[y_1/x_1,\dots,y_n/x_n] = t_{i}'\) for each \(i=1,\dots,m\).%
\footnote{The symbol \(\alpha\) comes from \emph{alpha equivalence} in theoretical computer science which means that the two expressions are identical up to renaming of variables.}

We define \(\sim_\alpha\) for \(m\)-tuple of term-in-contexts \((\Gamma \vdash t_1,\dots,\Gamma\vdash t_m)\) and \((\Delta\vdash s_1,\dots \Delta\vdash s_m)\) as
\((\Gamma \vdash t_1,\dots,\Gamma \vdash t_m ) \sim_\alpha (\Delta \vdash s_1,\dots,\Delta\vdash s_m) \) iff
\(\Gamma = x_1:X_1,\dots,x_n:X_n\), \(\Delta = y_1:X_1,\dots,y_n:X_n\), and \(t_i[y_1/x_1,\dots,y_n/x_n] = s_i\) for \(i=1,\dots,m\).
Also, we define an equivalence relation \(\approx_{E,\alpha}\) as \((\Gamma \vdash t_1,\dots,\Gamma \vdash t_m) \approx_{E,\alpha} (\Delta \vdash s_1,\dots,\Delta \vdash s_m)\) iff \(\Gamma = x_1:X_1,\dots,x_n:X_n\), \(\Delta = y_1:X_1,\dots,y_n:X_n\), \(t_i[y_1/x_1,\dots,y_n/x_n] \approx_E s_i\) for \(i=1,\dots,m\).
We write \(\Gamma \mid_E (t_1,\dots,t_m)\) for the equivalence class of \((\Gamma \vdash t_1,\dots,\Gamma \vdash t_m)\) with respect to \(\approx_{E,\alpha}\).

\begin{definition}
Given \((\varsign,E)\), we can construct an \(\varsorts\)-sorted Lawvere theory \(\syn{\varsign,E}\) as follows:
\begin{itemize}
	\item \(\Ob(\syn{\varsign,E}) = \varsorts^*\),
	\item \(\Hom_{\syn{\varsign,E}}(X_1\dots X_n, Y_1\dots Y_m)\) is the set of equivalence classes \(\Gamma \mid_E (t_1,\dots,t_n)\) where \(\Gamma = x_1:X_1,\dots,x_n:X_n\) and \(\Gamma \vdash t_i : Y_i\) for \(i=1,\dots,m\),
	\item \((\Delta \mid_E (s_1,\dots,s_l)) \circ (\Gamma \mid_E (t_1,\dots,t_m)) = (\Gamma \mid_E (s_1[t_1/y_1,\dots,t_m/y_m],\dots, s_l[t_1/y_1,\dots,t_m/y_m]))\) for any \(\Delta = y_1:Y_1,\dots,y_m:Y_m\), \(\Delta\vdash s_i : Z_i\), and \(\Gamma\), \(t_i\) as above.
\end{itemize}	
We call \(\syn{\varsign,E}\) the \(\varsorts\)-\emph{sorted Lawvere theory presented by \((\varsign,E)\)}.
We write \(\syn{\varsign}\) for \(\syn{\varsign,\emptyset}\).
\end{definition}

We have the \emph{forgetful} functor $U_{\cattheories_\varsorts}: \cattheories_\varsorts \to \catset^{\varsorts^* \times \varsorts}$ where $\catset^{\varsorts^*\times \varsorts}$ is the functor category from the discrete category $\varsorts^*\times \varsorts$ to $\catset$ defined as
\begin{align*}
U_{\cattheories_\varsorts}(\iota:\catfam_\varsorts \to \varlaw)(X,Y) &= \Hom_{\varlaw}(X,Y)\\
U_{\cattheories_\varsorts}(f : \varlaw \to \varlaw')_{(X,Y)} &= f|_{\Hom_\varlaw(X,Y)} : \Hom_{\varlaw}(X,Y) \to \Hom_{\varlaw'}(X,Y).
\end{align*}

By identifying a signature \(\varsign\) with the functor \(\varsorts^*\times \varsorts \to \catset\), \((X_1\dots X_k, X) \mapsto \{f : X_1\timesdots X_k \to X\mid f \in \varsign\}\), \(\syn{-} : \catset^{\varsorts^*\times \varsorts} \to \cattheories_\varsorts\) is left adjoint to the forgetful functor \(U_{\cattheories_\varsorts}\).
So, we call \(\syn{\varsign}\) the \emph{free} \(\varsorts\)-sorted Lawvere theory generated by \(\varsign\).

In the rest of this section, we shall see a correspondence between terms and morphisms in \(\syn{\varsign}\).
\begin{definition}
Let \(\varlaw\) be an \(\varsorts\)-sorted Lawvere theory.
\begin{itemize}
	\item Let \(\vec\pi: \vec X \to \vec Y\) in \(\varlaw\)  be a morphism from \(\vec X = X_1\timesdots X_n\) to \(\vec Y = Y_1\timesdots Y_m\) with \(m \le n\).
	We call \(\vec\pi\) a \emph{partial permutation} if there exists an injection \(u : \{1,\dots,m\} \to \{1,\dots,n\}\) such that \(X_{u(i)} = Y_i\) and \(\pi_i^{\vec Y}  \vec\pi = \pi_{u(i)}^{\vec X}\) for any \(i=1,\dots,m\).
A partial permutation is a \emph{permutation} if \(m=n\), i.e., \(u\) is a bijection.
	\item An \emph{efficient} morphism is a morphism \(\vec\omega : \vec X \to \vec Z\) in \(\varlaw\) satisfying the following: for any pair of a partial permutation \(\vec \pi : \vec X \to \vec Y\) and a morphism \(\vec\omega' : \vec Y \to \vec Z\), if \(\vec\omega = \vec\omega'\vec\pi\), then \(\vec\pi\) is a permutation.
\end{itemize}
\end{definition}
In \(\syn{\varsign,E}\), any partial permutation \(\vec\pi\) can be written as \((x_1,\dots,x_n \mid_E (x_{u(1)},\dots,x_{u(m)}))\) for some injection \(u : \{1,\dots,m\} \to \{1,\dots,n\}\), and \(u\) is a bijection iff \(\vec\pi\) is a permutation.
Also, any permutation is an isomorphism.

\begin{lemma}\label{lem:term_effmor}
For sorts \(X_1,\dots,X_m\in \varsorts\), 
let \(H = \{\vec\omega \in \Mor(\syn{\varsign}) \mid \text{\(\vec\omega\) is efficient and its codomain is } X_1\timesdots X_m\}\).
We define an equivalence relation \(\sim_\Pi\) on \(H\) as follows:
\(\vec\omega \sim_\Pi \vec\omega'\) iff there exists a permutation \(\vec\pi\) such that \(\vec\omega = \vec\omega' \vec\pi\).
Then, the following map is a well-defined bijection:
\begin{align*}
\phi : H/{\sim_\Pi} &\xrightarrow{\cong}\Term_\varsign^{X_1}(V)\timesdots \Term_\varsign^{X_m}(V)/{\sim_\alpha},\\
[\vec\omega]_{\sim_\Pi} &\mapsto [(t_1,\dots,t_m)]_{\sim_\alpha}\quad (\vec\omega = (\Gamma \mid_\emptyset (t_1,\dots,t_m))).
\end{align*}
Moreover, for any \([\vec\omega]_{\sim_\Pi} \in H_{\sim_\Pi}\) with \(\phi([\vec\omega]_{\sim_\Pi}) = [(t_1,\dots,t_m)]_{\sim_\alpha}\), \(\vec\omega\) is a permutation iff \((t_1,\dots,t_m)\) is an \(m\)-tuple of distinct variables.
\end{lemma}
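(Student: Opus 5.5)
The plan is to first characterise efficient morphisms of \(\syn{\varsign}\) in terms of the variables that occur, and then read off the bijection. Since \(E=\emptyset\), the relation \(\approx_\emptyset\) is syntactic equality. So a morphism \(\vec X\to X_1\timesdots X_m\) is uniquely a tuple \((t_1,\dots,t_m)\) of terms in the fixed context \(\Gamma=x_1:X_1',\dots,x_n:X_n'\), where we regard the context variables as canonical representatives.

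\textbf{Step 1 (characterisation).} We show that \(\vec\omega=(\Gamma\mid_\emptyset(t_1,\dots,t_m))\) is efficient iff every context variable \(x_j\) occurs in some \(t_i\), i.e.\ \(\Var(t_1)\cup\dots\cup\Var(t_m)=\{x_1,\dots,x_n\}\).
\begin{itemize}
\item Suppose some \(x_j\) does not occur in any \(t_i\). Let \(\vec\pi\) be the partial permutation dropping \(x_j\), with \(u\) the injection omitting \(j\). Then \(\vec\omega=\vec\omega'\vec\pi\), where \(\vec\omega'\) is the same tuple read in the smaller context. Since \(\vec\pi\) is not a permutation, \(\vec\omega\) is not efficient.
\item Conversely, suppose \(\vec\omega=\vec\omega'\vec\pi\) with \(\vec\pi=(x_1,\dots,x_n\mid(x_{u(1)},\dots,x_{u(k)}))\). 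Composition is substitution, so each \(t_i\) has the form \(s_i[x_{u(1)}/y_1,\dots]\). Hence every variable occurring in \(t_i\) lies in \(\im u\). If all variables occur, then \(u\) is surjective, hence bijective, and \(\vec\pi\) is a permutation.
\end{itemize}
This step is the crux, and it relies on the uniqueness of term representatives because \(E=\emptyset\).

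\textbf{Step 2 (well-definedness).} Suppose \(\vec\omega=\vec\omega'\vec\pi\) with \(\vec\pi\) a permutation given by a bijection \(u\). Then the tuple for \(\vec\omega\) is obtained from that of \(\vec\omega'\) by the injective renaming \(y_i\mapsto x_{u(i)}\). By Step 1 all context variables occur, so the set of occurring variables is mapped bijectively. Hence the two tuples are \(\sim_\alpha\)-equivalent. One should also note that the class of \((t_1,\dots,t_m)\) does not depend on the name chosen for the context, because of how \(\approx_{E,\alpha}\) identifies contexts.

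\textbf{Step 3 (bijectivity).}
\begin{itemize}
\item \emph{Surjectivity.} Given \((t_1,\dots,t_m)\), list its occurring variables \(\Var(t_1)\cup\dots\cup\Var(t_m)\) as a context \(\Gamma\). The morphism \((\Gamma\mid(t_1,\dots,t_m))\) is efficient by Step 1.
\item \emph{Injectivity.} Suppose two efficient morphisms give \(\sim_\alpha\)-equivalent tuples. Their contexts both consist exactly of the occurring variables, so the two contexts have the same length \(n\). The \(\alpha\)-renaming then induces a bijection \(u\) of \(\{1,\dots,n\}\) that preserves sorts, because a variable's sort is fixed by where it occurs. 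So the morphisms differ by precomposition with the permutation determined by \(u\).
\end{itemize}

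\textbf{Step 4 (the ``moreover'' clause).} If \(\vec\omega\) is a permutation, its tuple is \((x_{u(1)},\dots,x_{u(m)})\) with \(u\) bijective, so the entries are distinct variables. Conversely, if the \(t_i\) are distinct variables and \(\vec\omega\) is efficient, then by Step 1 these variables exhaust the context. Hence \(m=n\), and \(\vec\omega\) is a permutation. This property is invariant under \(\sim_\Pi\) and under \(\sim_\alpha\), so the clause is well posed.
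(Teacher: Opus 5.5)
Your proof is correct and follows essentially the same route as the paper. Both rest on the observation that, since \(E=\emptyset\), any factorization \(\vec\omega=\vec\omega'\vec\pi\) through a partial permutation forces every occurring variable into the image of \(u\), so a morphism whose context is exactly its set of occurring variables is efficient; well-definedness is the same renaming argument in both. Your explicit converse in Step 1 (an unused context variable gives a non-permutation factorization) and your direct injectivity/surjectivity argument just spell out what the paper compresses into ``\(\phi\) and \(\psi\) are inverses of each other''.
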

\begin{proof}
We first show that \(\phi\) is well-defined.
Let \(\vec\omega = (\Gamma \mid_\emptyset (t_1,\dots,t_m)) \in H\) and \(\Gamma = y_1,\dots,y_n\).
For a permutation \(\vec\pi = (x_1,\dots,x_n \mid_\emptyset (x_{u(1)},\dots,x_{u(m)}))\), \(\vec\omega\vec\pi=(x_1,\dots,x_n \mid_\emptyset (t_1',\dots,t_m'))\) where \(t_i' = t_i[x_{u(1)}/y_1,\dots,x_{u(n)}/y_n]\).
Then, \((t_1,\dots,t_m)\sim_\alpha (t_1',\dots,t_m')\) by definition.

We construct an inverse \(\psi\) of \(\phi\).
Given terms \(t_1,\dots,t_m\) of sorts \(X_1,\dots,X_m\) respectively, let \(\{x_1,\dots,x_n\} = \Var(t_1)\cup\dots \cup \Var(t_m)\) and \(\vec\omega = (x_1,\dots,x_n \mid_\emptyset (t_1,\dots,t_m))\).
To show that \(\vec\omega\) is efficient, assume that \(\vec\omega = \vec\omega' \vec\pi\) for a morphism \(\vec\omega'\) and a partial permutation \(\vec\pi\).
Since \(\vec\pi\) can be written as \(\vec\pi = (x_1,\dots,x_n \mid_\emptyset (x_{u(1)},\dots,x_{u(n')}))\) for some injection \(u : \{1,\dots,n'\} \to \{1,\dots,n\}\) and \(\vec\omega' = (y_1,\dots,y_{n'} \mid_\emptyset (t_1',\dots,t_m'))\) for some terms \(t_1',\dots,t_m'\),
we have \(t_i = t_i'[x_{u(1)}/y_1,\dots,x_{u(n')}/y_{n'}]\) for any \(i=1,\dots,m\).
Since \(\{x_1,\dots,x_n\} = \Var(t_1)\cup\dots \cup \Var(t_m)\), \(u\) is a bijection, so \(\vec\pi\) is a permutation.
Therefore, \(\vec\omega\) is efficient.

Let \(\psi([(t_1,\dots,t_m)]_{\sim_\alpha}) = [\vec\omega]_{\sim_\Pi}\).
By the definition of \((\Gamma \mid_E (t_1,\dots,t_m))\),
we can see that \(\psi\) is well-defined.
Also, it is straightforward to check that \(\phi\) and \(\psi\) are inverses of each other.

The second statement of the lemma is obvious from the construction of \(\phi\).
\end{proof}

\section{Ringoids, modules, and algebraic discrete Morse theory}
In this section, we review the basic definitions and facts on ringoids and modules over ringoids which are developed in \cite{m72}.
Then, we introduce algebraic discrete Morse theory for chain complexes of modules over ringoids.
\subsection{Preliminaries on ringoids and modules}

A \emph{ringoid} is a small \(\catab\)-enriched category.
A \emph{two-sided ideal} of a ringoid \(\cR\) is a subobject of the Hom functor \(\cR(-,-) = \Hom_{\cR}(-,-) : \cR^\Op \times \cR \to \catab\) in the category of additive functors from \(\cR^\Op \times \cR\) to \(\catab\).
For a set \(U \subset \Mor(\cR)\), we write \((U)\) for the two-sided ideal
\[
(U)(X,Y) = \ab\{\sum_{r_1, r_2} r_1 u r_2 ~\middle|~ u \in U \cap \cR(i',j'), r_1 \in \cR(j',j), r_2 \in \cR(i,i')\}
\]
where the sum runs over finite combinations of \(r_1,r_2\).
For a ringoid \(\cR\) and a two-sided ideal \(I\) of \(\cR\), the \emph{quotient ringoid} is the ringoid \(\cR/I\) such that \(\Ob(\cR/I) = \Ob(\cR)\) and \(({\cR/I})(i,j) = \cR(i,j)/I(i,j)\) for any \(i,j \in \Ob(\cR)\).

For a quiver (that is, a directed multigraph) \(Q = (Q_0,Q_1)\), the \emph{ringoid freely generated} by \(Q\) is the ringoid \(\bbZ Q\) such that
\(\Ob(\bbZ Q) = Q_0\), and for any \(v_1,v_2 \in Q_0\), \({\bbZ Q}(v_1,v_2)\) is the free abelian group generated by the set of paths from \(v_1\) to \(v_2\) in \(Q\).
Also, for a set \(U \subset \Mor(\bbZ Q)\), the \emph{ringoid presented} by \((Q,U)\) is the quotient ringoid \(\bbZ Q/(U)\).

A \emph{(left) \(\cR\)-module} for a ringoid \(\cR\) is an additive functor \(\cR \to \catab\).
A \emph{right \(\cR\)-module}  is an additive functor \(\cR^\Op \to \catab\).
We often just say \(\cR\)-module for a left \(\cR\)-module in this paper.
An \(\cR\)-linear map between (either left or right) \(\cR\)-modules \(M,N\) is a natural transformation \(M \to N\).
\begin{proposition}\cite{m72}
The category of (either left or right) \(\cR\)-modules is an abelian category with enough projectives.
\end{proposition}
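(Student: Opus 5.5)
The plan is to derive everything from the abelian structure of $\catab$ and then write down explicit projectives. Let $\cR$ be a ringoid. A left $\cR$-module is an additive functor $\cR \to \catab$, and the category of such functors is a full subcategory of the functor category $[\cR,\catab]$. The opposite case is covered by replacing $\cR$ with $\cR^\Op$, which is again a ringoid, so it suffices to treat left modules.

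\textbf{Step 1 (abelian structure).} I will check that this category is abelian by doing everything pointwise.
\begin{itemize}
\item Hom sets of natural transformations are abelian groups under pointwise addition, and composition is bilinear.
\item The zero functor is a zero object, and $(M\oplus N)(i)=M(i)\oplus N(i)$ is a biproduct. This functor is additive because the direct sum of additive maps is additive.
\item For $\alpha:M\to N$, define $\ker\alpha(i)=\ker\alpha_i$ and $\mathrm{coker}\,\alpha(i)=\mathrm{coker}\,\alpha_i$. The action on morphisms is restriction (respectively, induced maps), and these are additive because they are restrictions or quotients of additive maps. They satisfy the universal properties because limits and colimits in functor categories are computed objectwise, and the additive functors are closed under these particular ones.
\item The canonical map $\mathrm{coim}\,\alpha\to\im\alpha$ is an isomorphism at each object $i$, since $\catab$ is abelian. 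Hence it is a natural isomorphism.
\end{itemize}
A morphism is mono (respectively, epi) iff it is so pointwise. Sufficiency holds because kernels are pointwise. Necessity is shown by testing against the kernel computed pointwise.

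\textbf{Step 2 (representables are projective).} For each object $j$, consider the representable module $\cR(j,-)$, which is additive because composition is bilinear. The enriched Yoneda lemma gives
\[
\Hom(\cR(j,-),M)\cong M(j),\qquad \alpha\mapsto \alpha_j(\Id_j).
\]
Surjectivity uses $m\mapsto (r\mapsto M(r)m)$, which is natural and additive, and injectivity follows because $\alpha_k(r)=M(r)\alpha_j(\Id_j)$. Under this isomorphism, $\Hom(\cR(j,-),-)$ becomes evaluation at $j$. Evaluation is exact because exactness is pointwise, so $\cR(j,-)$ is projective. Direct sums of projectives are projective, since a direct sum of representables represents a product of evaluations.

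\textbf{Step 3 (enough projectives).} Given a module $M$, set
\[
P=\bigoplus_{j\in\Ob\cR}\ \bigoplus_{m\in M(j)}\cR(j,-),
\]
and let $P\to M$ send the generator $\Id_j$ in the summand indexed by $m$ to $m$. At each object $k$, every $m\in M(k)$ is the image of $\Id_k$ in its own summand, so the map is pointwise surjective, hence epi. This requires $\Ob\cR$ and each $M(j)$ to be sets, which holds because $\cR$ is small.

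The main point needing care is Step 1. We must confirm that the pointwise kernels and cokernels remain additive functors and are genuinely kernels and cokernels in the additive-functor subcategory, not merely in the category of all functors. Each of these facts is a routine pointwise verification.
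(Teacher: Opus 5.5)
Your proof is correct. Pointwise kernels, cokernels and biproducts make additive functors $\cR\to\catab$ an abelian category, and the enriched Yoneda isomorphism $\Hom(\cR(j,-),M)\cong M(j)$ shows that direct sums of representables are projective and map onto every module; the paper gives no argument of its own and simply cites Mitchell, whose treatment is this same standard route.
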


\begin{definition}
Let $\mathcal{R}$ be a ringoid and let $\cX = (\cX_i)_{i\in\Ob(\mathcal{R})}$ be a
family of sets. The \emph{free left $\mathcal{R}$–module on $\cX$} is the
left $\cR$-module $F(\cX)$ defined by
\[
  F(\cX)(i) \;=\; \bigoplus_{j\in\Ob(\mathcal{R})} \;
               \bigoplus_{x\in \cX_j} \mathcal{R}(i,j)\,{x},
\]
the direct sum of copies of $\mathcal{R}(i,j)$ indexed by generators
$x\in \cX_j$. A morphism of left $\cR$-modules $F(\cX)\to M$ is determined uniquely by
the images of the elements ${x}\in F(\cX)(j)$ for $x\in \cX_j$.
\end{definition}

\begin{proposition}
For any left $\cR$-module $M$, there is an exact sequence of \(\cR\)-modules
\[
  F(\cY) \xrightarrow{\;\phi\;} F(\cX) \longrightarrow M \longrightarrow 0
\]
where:
\begin{itemize}
  \item $\cX = (\cX_i)$ is a family of sets of \emph{generators} $x\in \cX_i$
    for $M(i)$;
  \item $\cY = (\cY_i)$ is a family of sets of \emph{relations}, where each
    $r\in \cY_i$ is a finite $\mathbb{Z}$-linear combination of elements of
    the form $u\cdot x$ with $u\in\mathcal{R}(i,j)$, $x\in \cX_j$;
  \item the map $\phi$ encodes these relations, and $M$ is, up to isomorphism,
    the unique module generated by the $\cX_i$ subject to the relations $\cY_i$.
\end{itemize}
\end{proposition}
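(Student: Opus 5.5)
The plan is to imitate the classical construction of a presentation for a module over a ring, working objectwise in the ringoid. First, for each object \(i \in \Ob(\cR)\), take \(\cX_i\) to be the underlying set of \(M(i)\), or any subset that generates \(M\) as an \(\cR\)-module. By the universal property of \(F(\cX)\) stated in the preceding definition, a map \(\varepsilon : F(\cX) \to M\) is determined by sending the generator \(x \in F(\cX)(j)\), for \(x \in \cX_j\), to the element \(x \in M(j)\). Concretely, \(\varepsilon_i(\sum u\,x) = \sum M(u)(x)\) for \(u \in \cR(i,j)\). This is additive, and it is natural in \(i\) because \(M\) is a functor: for \(r \in \cR(i,i')\), postcomposition \(u \mapsto r u\) matches \(M(r)\,M(u) = M(ru)\). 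With \(\cX_i = M(i)\), each \(\varepsilon_i\) is surjective, since \(x = \varepsilon_i(\Id_i\,x)\). Hence \(\varepsilon\) is an epimorphism of \(\cR\)-modules, because exactness in the functor category is checked objectwise.

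Next, let \(K = \ker \varepsilon\). It is a left \(\cR\)-module, being a subfunctor of \(F(\cX)\); kernels exist by the cited proposition that \(\cR\)-modules form an abelian category. Apply the same construction to \(K\): set \(\cY_i\) to be the underlying set of \(K(i)\). Each element of \(K(i)\subseteq F(\cX)(i)\) is by definition a finite \(\bbZ\)-linear combination of terms \(u\cdot x\) with \(u \in \cR(i,j)\) and \(x\in\cX_j\), as the statement requires. This gives an epimorphism \(F(\cY) \to K\). Define \(\phi\) as its composite with the inclusion \(K \hookrightarrow F(\cX)\). Then \(\im\phi = K = \ker\varepsilon\), so the sequence \(F(\cY)\to F(\cX)\to M\to 0\) is exact.

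Finally, the uniqueness clause follows from exactness, which gives \(M \cong \operatorname{coker}\phi\). Any module generated by the \(\cX_i\) subject to the relations \(\cY_i\) is, by definition, the cokernel of the map \(F(\cY)\to F(\cX)\) sending each relation to its combination. Cokernels are unique up to isomorphism.

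The only step that needs real care is checking naturality of \(\varepsilon\), and of the map \(F(\cY)\to K\). This means verifying that the free-module universal property is genuinely Yoneda-type: \(\Hom(\cR(-,j),M)\cong M(j)\), the additive Yoneda lemma. I would prove that correspondence explicitly first. Everything else is formal.
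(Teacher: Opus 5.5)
The paper gives no proof of this proposition. It is quoted as standard background on modules over ringoids from \cite{m72}, so the only comparison is with the statement itself. Your argument is the standard one and is correct: cover $M$ by the free module on all its elements, take the kernel objectwise, cover the kernel the same way, and read off $M\cong\operatorname{coker}\phi$. It works because kernels, images and exactness for additive functors into $\catab$ are all computed objectwise.

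One point needs fixing. A left module is a covariant additive functor, so the free module on a generator at $j$ is the representable $\cR(j,-)$. The Yoneda correspondence you need is therefore $\Hom(\cR(j,-),M)\cong M(j)$, not $\Hom(\cR(-,j),M)$.

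Accordingly, the coefficients $u$ attached to $x\in\cX_j$ in $F(\cX)(i)$ must be morphisms $j\to i$. Only then do $M(u)(x)\in M(i)$ and the composite $ru$ make sense; the paper's formula with $\cR(i,j)$ has to be read this way. Your naturality check then becomes: for $r\colon i\to i'$ and $u\colon j\to i$,
\[
F(\cX)(r)(u\,x)=(ru)\,x\mapsto M(ru)(x)=M(r)\bigl(M(u)(x)\bigr).
\]
With that reading, the rest of your proof goes through unchanged.
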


Concretely, one often writes a presentation of a left $\mathcal{R}$-module $M$
by listing:
\begin{itemize}
  \item the generators $x\in \cX_j$, each labelled by the object $j$ at which it
    lives;
  \item the relations, which are formal $\mathbb{Z}$-linear combinations
    \[
      \sum_{k} u_k\cdot x_k = 0
    \]
    with $u_k\in \mathcal{R}(i,j_k)$ and $x_k\in \cX_{j_k}$.
\end{itemize}

\begin{definition}
The \emph{tensor product} of a right $\mathcal{R}$-module $M$ and a left
$\mathcal{R}$-module $N$ is the coend
\[
  M \otimes_{\mathcal{R}} N
  \;:=\;
  \int^{i\in\mathcal{R}} M(i) \otimes_{\mathbb{Z}} N(i),
\]
that is, the cokernel of the pair of maps
\[
  \bigoplus_{r\colon i\to j} 
    M(j)\otimes_{\mathbb{Z}} N(i)
  \rightrightarrows
  \bigoplus_{i\in\Ob(\mathcal{R})}
    M(i)\otimes_{\mathbb{Z}} N(i),
\]
where the two arrows are given on a generator
$m\otimes n\in M(j)\otimes N(i)$ by
\[
  M(r)(m)\otimes n
  \qquad\text{and}\qquad
  m\otimes N(r)(n),
\]
for each morphism $r\colon i\to j$ in $\mathcal{R}$.
\end{definition}

\subsection{Algebraic discrete Morse theory for ringoids}
This subsection is based on \cite{skoldberg06} but extended to modules over ringoids.

Let \(\cR\) be a ringoid and \(\bfK = (K_\bullet,\delta_\bullet)\) be a chain complex of left \(\cR\)-modules.
Suppose that each \(K_n\) is a free \(\cR\)-module generated by a family of sets \(I_n = \{I_{n,x}\}_{x\in \Ob(\cR)}\), and also suppose \(I_{n,x} \cap I_{m,y} = \emptyset \) unless \(n=m\), \(x=y\).
Then, each \(\delta_n : K_n \to K_{n-1}\) can be written in the form
\[
\delta_n(\alpha) = \sum_{\beta \in \bigcup_x I_{n-1,x}} [\alpha:\beta]_\delta{\beta}
\]
where \([\alpha:\beta]_\delta\) is a morphism from \(x\) to \(y\) in \(\cR\) for each \(\alpha \in I_{n,y}\), \(\beta \in I_{n-1,x}\).
We ommit the subscript \(\delta\) when there is no confusion.

Let \(G_\bfK = (V_\bfK, E_\bfK)\) be the directed (simple) graph whose set of vertices \(V_\bfK\) is the set \(\bigcup_{n,x} I_{n,x}\) and the set of edges \(E_\bfK\) consists of  \(\alpha \to \beta\) such that \([\alpha:\beta]\) is defined and nonzero.

\begin{definition}
A \emph{partial matching} \(\cM\) is a subset of \(E_\bfK\) such that no vertex is incident to more than one edge in \(\cM\).
For a partial matching \(\cM\), define \(G_\bfK^\cM = (V_\bfK^\cM, E_\bfK^\cM)\) as \(V_\bfK^\cM = V_\bfK\), \(E_\bfK^\cM = (E_\bfK \setminus \cM) \cup \{(\beta \to \alpha) \mid (\alpha \to \beta) \in \cM\}\).
\end{definition}

For a partial matching \(\cM\), a vertex \(\alpha\) is \(\cM\)-\emph{critical} or just \emph{critical} if there is no edge in \(\cM\) incident to \(\alpha\).
We write \(\Cr_n(\cM)\) for the set of critical vertices.
Also, a vertex \(\alpha\) is \(\cM\)-\emph{collapsible} or just \emph{collapsible} if there exists \(\beta\) such that \(\alpha \to \beta \in \cM\), and \(\alpha\) is \(\cM\)-\emph{redundant} or just \emph{redundant} if there exists \(\beta\) such that \(\beta \to \alpha \in \cM\).

We write \(\alpha \xrightarrow{+}_\cM \beta\) for the edge \(\beta \to \alpha\) in \(\cM\) and \(\alpha \xrightarrow{-}_\cM \beta\) for the edge \(\alpha \to \beta\) in \(E_\bfK \setminus \cM\).
Also, write \(\alpha \xrightarrow{+-}_\cM \beta\) (resp. \(\alpha \xrightarrow{-+}_\cM \beta\)) for a path \(\alpha \xrightarrow{+}_\cM \gamma \xrightarrow{-}_\cM \beta\) (resp. \(\alpha \xrightarrow{-}_\cM \gamma \xrightarrow{+}_\cM \beta\)) for some \(\gamma\).
%For each \(n\), let \(\succ_{\cM,n}\) be the partial order on \(I_n\) such that, for \(\alpha,\gamma \in I_n\), \(\alpha \succ_{\cM,n} \gamma\) if and only if there exists a path \(\alpha \to \beta \to \gamma\) in \(G_\bfK^\cM\).
%We often write just \(\succ_\cM\) instead of \(\succ_{\cM,n}\).

\begin{definition}
A partial matching \(\cM\) with the following conditions is called a \emph{Morse matching}:
(i) for any \(\alpha \to \beta \in \cM\), if \(\alpha \in I_{n,x}\), then \(\beta \in I_{n-1,x}\) and \([\alpha:\beta] = \pm 1_x\), and
(ii) there is no infinite path \(\alpha_1 \xrightarrow{+-}_{\cM} \alpha_2 \xrightarrow{+-}_{\cM} \dots\) in \(G_{\bfK}^\cM\).
\end{definition}

For \(\alpha \in \bigcup_x I_{n,x}\), \(\alpha' \in \bigcup_x I_{n-1,x}\), let \(\cM(\alpha,\alpha')\) be the set of paths \(p = \alpha = \alpha_0 \xrightarrow{-}_\cM \alpha_1 \xrightarrow{+}_\cM \dots \xrightarrow{-}_\cM \alpha_{2k+1} = \alpha'\) in \(G_\bfK^\cM\),
i.e., \(\alpha_0 \xrightarrow{-+}_\cM \alpha_2 \xrightarrow{-+}_\cM \alpha_4 \xrightarrow{-+}_\cM \dots \xrightarrow{-+}_\cM \alpha_{2k} \xrightarrow{-}_\cM \alpha_{2k+1}\).

\begin{theorem}
For a Morse matching $\cM$, let \(K_n^\cM = \bigoplus_{\alpha \in \Cr_n(\cM)} K_{n,\alpha}\).
Define \(\delta_n^\cM\) as
\begin{align*}
\delta_n^\cM &: K_n^\cM \to K_{n-1}^\cM\\
\delta_n^\cM(\alpha) &= \sum_{\alpha' \in \Cr_{n-1}(\cM)}\sum_{p \in \cM(\alpha,\alpha')}w(p),\\
w(p) &= (-1)^{k} 
[\alpha_{2k}:\alpha_{2k+1}][\alpha_{2k}:\alpha_{2k-1}]\dots[\alpha_2:\alpha_3][\alpha_2:\alpha_1][\alpha_0:\alpha_1] \alpha'
\\
&(p \text{ is the path }\alpha = \alpha_0 \to \dots \to \alpha_{2k+1} = \alpha' \text{ in } \cM(\alpha,\alpha')).
\end{align*}
Then, the sum in \(\delta_n^\cM\) is finite and \(\bfK^\cM = (K_\bullet^\cM, \delta_\bullet^\cM)\) is a chain complex that is homotopy equivalent to \(\bfK\).
\end{theorem}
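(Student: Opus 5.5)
Following Sköldberg's argument, I will prove the theorem by explicit chain maps and a homotopy, with each step checked objectwise in $\cR$ so that everything is $\cR$-linear. Throughout, a collapsible $\alpha$ is matched to a redundant $\beta$ with $[\alpha:\beta]=\pm 1_x$; this invertibility is what the construction relies on.

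\textbf{Finiteness.} The sum in $\delta_n^\cM$ is finite because each $\delta_m(\gamma)$ has finitely many nonzero coefficients, so every vertex of $G_\bfK^\cM$ has finite out-degree. Along a path in $\cM(\alpha,\alpha')$, each step $\alpha_{2i}\xrightarrow{-+}_\cM\alpha_{2i+2}$ is determined by a choice of $\alpha_{2i+1}$ among finitely many, and the next vertex is then unique since $\cM$ is a matching. By König's lemma, the set of such paths is finite provided there is no infinite path. Such an infinite path would give an infinite sequence $\alpha_2\xrightarrow{+-}\dots$ of the kind excluded by condition (ii).

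\textbf{Reduction to one matched edge.} The main structural step is to split $K_n$ (for each $n$) into three parts: $K_n=\bigoplus_{\Cr}\oplus\bigoplus_{\text{collapsible}}\oplus\bigoplus_{\text{redundant}}$. I expect the cleanest route is the general "Gaussian elimination" lemma. Take a complex of free $\cR$-modules and a basis element $\alpha\in I_{n,x}$ with $[\alpha:\beta]=u$ invertible. Then $\bfK$ is homotopy equivalent to the complex obtained by deleting $\alpha,\beta$ and replacing each coefficient by $[\gamma:\beta']-[\gamma:\beta]u^{-1}[\alpha:\beta']$. This is proved by writing $\delta_n$ as a $2\times2$ block matrix around $\alpha,\beta$, just as for rings. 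One checks that the evident projection and inclusion maps, together with the homotopy $h(\beta)=u^{-1}\alpha$, satisfy the homotopy identities. Since all entries are morphisms in $\cR$ composed in the correct order, the computation is identical to the ring case.

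\textbf{Iterating the reduction.} In general $\cM$ may be infinite, so the reduction cannot simply be iterated finitely many times. Instead I will define the maps directly. Let $\phi:K_n^\cM\to K_n$ be the inclusion corrected by the sum over zig-zag paths ending at redundant cells. Let $\psi:K_n\to K_n^\cM$ be given by the sum over all alternating paths ending at critical cells. Let the homotopy $\chi:K_n\to K_{n+1}$ be given by paths ending with a $+$ edge, with signs $(-1)^k$ and weights as in $w(p)$. Each of these sums is finite by the same König argument.

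\textbf{Verifying the identities, and the main obstacle.} I then have to verify four identities:
\begin{itemize}
\item $\delta^\cM\delta^\cM=0$;
\item $\phi$ and $\psi$ are chain maps;
\item $\psi\phi=\Id$;
\item $\Id-\phi\psi=\delta\chi+\chi\delta$.
\end{itemize}
Each reduces to a path-counting identity: concatenating a path with an edge either produces a longer path or cancels against $\delta\delta=0$, using $[\alpha:\beta][\alpha:\beta]^{-1}$ cancellation at matched edges. This bookkeeping of signs and cancellations over possibly infinite families of paths is the main obstacle. Condition (ii) supplies a well-founded order on cells of each degree: put $\alpha\succ\gamma$ if there is a path $\alpha\xrightarrow{+-}\cdots\gamma$. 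To handle the bookkeeping, I would argue by well-founded induction along this order, which is Sköldberg's approach. Alternatively, I would exhibit $\bfK^\cM$ as the result of transfinitely applying the one-edge reduction. Either way, every formula is simply the ring-theoretic formula with coefficients read as composable morphisms of $\cR$.
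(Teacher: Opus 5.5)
Your proposal is correct and follows essentially the paper's route: the paper's proof is only the remark that Sk\"oldberg's proof of his Theorem 1 carries over to ringoids, and you transplant exactly Sk\"oldberg's path-sum maps $\phi,\psi,\chi$ and well-founded induction, rightly noting that the argument uses only composability of coefficients in $\cR$ and invertibility of the matched entries $\pm 1_x$. Some minor slips:
\begin{itemize}
\item An infinite branch $\alpha_0\xrightarrow{-}\alpha_1\xrightarrow{+}\alpha_2\xrightarrow{-}\cdots$ contradicts (ii) via $\alpha_1\xrightarrow{+-}\alpha_3\xrightarrow{+-}\cdots$, not via a path starting at $\alpha_2$.
\item The correction terms of $\phi$ land on the upper cells of matched pairs, which the paper calls collapsible, not redundant.
\item In the one-edge reduction, the comparison maps are not the bare projection and inclusion. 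The projection is a chain map only if $[\alpha:\beta']=0$ for all $\beta'\neq\beta$, so both maps need the usual correction terms; your path sums $\phi,\psi$ supply these anyway.
\end{itemize}
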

The proof can be done in the same way as in \cite[Theorem 1]{skoldberg06}.

\section{Bar resolution for Lawvere theories}\label{sec:bar_resolution}

In the spirit of Beck \cite{beck67}, Barr \cite{barr02}, and Quillen \cite{quillen1970co,quillen2006homotopical}, a \emph{module} over an object \(C\) in an algebraic category \(\bfC\) is defined as an abelian group object in the slice category \(\bfC/C\).
Jibladze and Pirashvili \cite{jp06} showed that, for any \(\varsorts\)-sorted Lawvere theory \(\varlaw\), the category of modules over \(\varlaw\) is equivalent to the category of left modules over a certain ringoid \(\cU_\varlaw\) called the \emph{enveloping ringoid} of \(\varlaw\).
We give a construction of \(\cU_\varlaw\) below.

Let \(\varobj \in \Ob(\varlaw)\).
Define the ringoid \(\cR_\varlaw^{\varobj}\) by the following presentation \(\ab(Q^{\vec X}, U^{\vec X})\).
The set of vertices of \(Q^{\vec X}\) is \(\bigcup_{Y \in \varsorts}\varlaw(\vec X, Y)\).
The quiver \(Q^{\vec X}\) has an edge denoted by \(\partial_i(\omega)_{\vec\sigma}\) from \(\sigma_i\) to \(\omega\vec\sigma\) for each \(\vec\sigma = \ab<\sigma_1,\dots,\sigma_n> : \vec X \to \vec Y\), \(\omega : \vec Y \to Z\), and \(i=1,\dots,n\).
The set of relations \(U^{\vec X}\) contains
\[
\partial_i (\pi_i)_{\vec\sigma} = 1_{\sigma_i},\quad
\partial_i (\pi_j)_{\vec\sigma} = 0 \quad (i \neq j)
\]
for each $\vec\sigma = \ab<\sigma_1,\dots,\sigma_n>$
and 
\[
\partial_i(\omega\circ \ab<\omega_1',\dots,\omega_k'>)_{\vec\sigma}
- \sum_{j=1}^k \partial_j(\omega)_{\ab<\omega_1'\circ \vec\sigma,\dots,\omega_k'\circ \vec\sigma>} \partial_i(\omega_j')_{\vec\sigma}
\]
for each \(\omega : \varobj[1]'' = \varsort[1]''_1 \timesdots\varsort[1]''_k \to \varsort[2]\), \(\omega_j' : \varobj[1]' \to \varobj[1]''\) (\(j=1,\dots,k\)), \(\vec\sigma : \varobj[1] \to \varobj[1]'\).

For \(\vec\alpha : \varobj \to \varobj'\), define an additive functor \(\cR_\varlaw^{\vec\alpha} : \cR_\varlaw^{\varobj'} \to \cR_\varlaw^{\varobj}\) by
\begin{align*}
\cR_\varlaw^{\vec\alpha}(\vec\sigma) &= \vec\sigma \vec\alpha\quad \text{for an object $\vec \sigma$ in $\cR^{\vec X'}_\varlaw$},\\
\cR_\varlaw^{\vec\alpha} \partial_i(\omega)_{\vec\sigma} &= \partial_i(\omega)_{\vec\sigma \vec\alpha} \quad \text{for a morphism $\partial_i(\omega)_{\vec\sigma}$ in $\cR^{\vec X'}_\varlaw$}.
\end{align*}

\begin{definition}
The enveloping ringoid \(\cU_\varlaw\) of \(\varlaw\) is defined as follows.
\begin{itemize}
	\item The objects of \(\cU_\varlaw\) are the morphisms in \(\varlaw\) whose codomains are in \(\varsorts\).
	\item For \(f : \varobj' \to \varsort[1]\), \(g : \varobj \to \varsort[2]\) in \(\varlaw\),
\[
{\cU_\varlaw}(f, g) = \bigoplus_{\vec\alpha : \varobj \to \varobj'} {\cR_\varlaw^{\varobj}}(f\circ \vec\alpha,g)
\]
	\item The composition is given as follows. For \(r : f \to g\), $r' : g \to h$, (\(f : \varobj \to \varsort[1]\), \(g : \varobj' \to \varsort[1]'\), \(h : \varobj'' \to \varsort[1]''\)), the $\vec\beta$-th component of \(r' \circ r\) (\(\vec\beta : \varobj'' \to \varobj\)) is given by
\[
(r'\circ r)_{\vec\beta} = \sum_{\vec\alpha : \varobj' \to \varobj, \vec\alpha' : \varobj'' \to \varobj', \vec\alpha \circ \vec\alpha' = \vec\beta} r'_{\vec\alpha'} \circ \cR_\varlaw^{\vec\alpha'}r_{\vec\alpha}.
\]
\end{itemize}
\end{definition}

\begin{remark}
In \cite[3.3]{jp06}, the enveloping ringoid of \(\varlaw\) is called the \emph{total ringoid} \(\cR_\varlaw[\varlaw^\Op]\) of the ringoid-valued functor \(\varobj \mapsto \cR_\varlaw^{\varobj}\).
\end{remark}

We write \(\partial_{i_1}(\omega_1)_{\vec\sigma_1}\dots \partial_{i_k}(\omega_k)_{\vec\sigma_k}\) instead of \(\In_{(1_{\varobj})}(\partial_{i_1}(\omega_1)_{\vec\sigma_1}\dots \partial_{i_k}(\omega_k)_{\vec\sigma_k})\).
Also, we write \(\vec\alpha^*\) instead of \(\In_{\vec\alpha}(1_{\omega\vec\alpha})\) for any \(\omega\).
Then, the following is immediate by definition.
\begin{lemma}\label{eqn:alpha_star}
\begin{enumerate}
	\item \(\In_{\vec\alpha}(\partial_1(\omega_1)_{\vec\sigma_1}\dots\partial_{i_k}(\omega_k)_{\vec\sigma_k}) = \partial_{i_1}(\omega_1)_{\vec\sigma_1}\dots \partial_{i_k}(\omega_k)_{\vec\sigma_k}\vec\alpha^* \),
	\item \(\vec\alpha^* \partial_i(\omega)_{\vec\sigma} = \In_{\vec\alpha}(\partial_i(\omega)_{\vec\sigma \vec\alpha}) = \partial_i(\omega)_{\vec\sigma\vec\alpha}\vec\alpha^*\).
\end{enumerate}
\end{lemma}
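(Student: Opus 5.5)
Both identities come from unwinding the composition formula in \(\cU_\varlaw\). The only inputs are the definition of the functors \(\cR_\varlaw^{\vec\alpha}\) and the abbreviations \(\In_{(1_{\varobj})}(\cdot)\) and \(\vec\alpha^* = \In_{\vec\alpha}(1_{\omega\vec\alpha})\).

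For a composite \(r'\circ r\), the formula gives the \(\vec\beta\)-component as a sum over factorizations \(\vec\beta = \vec\alpha\circ\vec\alpha'\). When one factor is concentrated in a single component (an \(\In\)-element), this sum collapses to one term. So in each case I will write both factors as \(\In\)-elements, find the unique nonzero component, and compute it.

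For (1), put \(r = \vec\alpha^* = \In_{\vec\alpha}(1_{\omega\vec\alpha})\), a morphism \(\omega \to \omega\vec\alpha\) in \(\cU_\varlaw\). Put \(r' = \In_{1}(\rho)\), where \(\rho = \partial_{i_1}(\omega_1)_{\vec\sigma_1}\cdots\partial_{i_k}(\omega_k)_{\vec\sigma_k}\) is a morphism out of \(\omega\vec\alpha\) in \(\cR_\varlaw^{\varobj}\).
\begin{itemize}
\item In the composition formula, only \(\vec\alpha'=1\) contributes for \(r'\), and only the component \(\vec\alpha\) contributes for \(r\).
\item Hence \(r'\circ r\) is concentrated in component \(\vec\alpha\circ 1=\vec\alpha\), with value \(\rho\circ\cR_\varlaw^{1}(1_{\omega\vec\alpha}) = \rho\).
\item That is exactly \(\In_{\vec\alpha}(\rho)\), which proves (1).
\end{itemize}

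For (2), compose in the other order: \(\vec\alpha^*\circ\In_1(\partial_i(\omega)_{\vec\sigma})\).
\begin{itemize}
\item Now \(r=\In_1(\partial_i(\omega)_{\vec\sigma})\) and \(r'=\In_{\vec\alpha}(1)\).
\item The only contributing pair is \(\vec\alpha'=\vec\alpha\) for \(r'\) and component \(1\) for \(r\), giving the \(\vec\alpha\)-component
\[
1\circ\cR_\varlaw^{\vec\alpha}(\partial_i(\omega)_{\vec\sigma}) = \partial_i(\omega)_{\vec\sigma\vec\alpha}.
\]
\item The last equality is the definition of \(\cR_\varlaw^{\vec\alpha}\) on generators.
\item So the composite is \(\In_{\vec\alpha}(\partial_i(\omega)_{\vec\sigma\vec\alpha})\), and by part (1) this equals \(\partial_i(\omega)_{\vec\sigma\vec\alpha}\vec\alpha^*\).
\end{itemize}

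The main obstacle is bookkeeping rather than ideas. I must keep track of which object each \(\In\) lives at, and confirm that the direction conventions in the composition formula match the claimed order of factors. In particular, \(\vec\alpha^*\) must be read with the correct domain object, namely the \(\omega\) whose composite with \(\vec\alpha\) is the relevant domain. The convention to check first is that \(\cR_\varlaw^{\vec\alpha'}\) acts on the right-hand (earlier) factor \(r\), as in the displayed formula.
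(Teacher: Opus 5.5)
Your proposal is correct and takes the same route as the paper, which gives no argument beyond calling the lemma immediate from the composition formula in \(\cU_\varlaw\). Your computation is exactly that unwinding: the sum collapses to one term because both factors are \(\In\)-elements. In (1) the only contributing pair is \((\vec\alpha,1)\) and \(\cR_\varlaw^{1}\) acts as the identity. In (2) the only contributing pair is \((1,\vec\alpha)\) and \(\cR_\varlaw^{\vec\alpha}\) sends \(\partial_i(\omega)_{\vec\sigma}\) to \(\partial_i(\omega)_{\vec\sigma\vec\alpha}\).
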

By this lemma, we can see that any morphism of \(\cU_\varlaw\) can be written as a finite sum of morphisms of the form \(\partial_{i_1}(\omega_1)_{\vec\sigma_1}\dots \partial_{i_k}(\omega_k)_{\vec\sigma_k}\vec\alpha^*\).

\begin{definition}
The \(\cU_\varlaw\)-\emph{module of K\"ahler differentials} \(\kahlerdiffs_\varlaw\) is defined by generators and relations as follows.
For each \(\omega : \varsort[1]_1 \timesdots \varsort[1]_n \to \varsort[2]\), \(\kahlerdiffs_\varlaw\) has a generator \(d(\omega)\) in \(\kahlerdiffs_\varlaw(\omega)\), and relations
\[
d(\omega \circ \ab<\omega_1',\dots,\omega_n'>) = \sum_{i=1}^n \partial_i(\omega)_{\ab<\omega_1',\dots,\omega_n'>} d(\omega_i'),
\]
\[
\vec\alpha^*d(\omega) = d(\omega\vec\alpha).
\]
\end{definition}
Note that, for any $\omega : \vec X = \varsort_1\timesdots \varsort_n \to \varsort$, since \(\omega = \omega 1_{\vec X} = \omega\ab<\pi_1,\dots,\pi_n>\), we have \(d(\omega) = \sum_{i=1}^n \partial_i(\omega)_{1_{\vec X}} d(\pi_i) = \sum_{i=1}^n \partial_i(\omega)_{1_{\vec X}} \pi_i^* d1_{\varsort_i}\) in \(\kahlerdiffs_\varlaw\).

\subsection{Unnormalized bar resolution}\label{subsec:unnormalized}

In this subsection, we define the unnormalized bar resolution of \(\kahlerdiffs_\varlaw\), \(\dots \xrightarrow{\delta_2^u} B_1^u \xrightarrow{\delta_1^u} B_0^u \xrightarrow{\delta_0^u} \kahlerdiffs_\varlaw\).

For \(n > 0\), define \(B_n^u\) as the free \(\cU_\varlaw\)-module generated by
\[
(\omega_1, \vec\omega_2, \dots, \vec\omega_n) \in B_n^u(\omega_1\vec\omega_2  \dots  \vec\omega_n)
\]
for each \(n\)-tuple of \(\omega_1 : \varobj_1 \to \varsort[1]\), \(\vec\omega_i : \varobj_i \to \varobj_{i-1}\)
(\(\vec X_i \in S^*, Y \in S\), \(i=2,\dots,n\)).
For \(n=0\), define \(B_0^u\) as the free \(\cU_\varlaw\)-module generated by
\[
()_{\varsort} \in B_0^u\ab(1_{\varsort})
\]
for each \(\varsort \in \varsorts\).
First, define \(\delta^u_0 : B_0^u \to \kahlerdiffs_\varlaw\) and \(\delta_1^u : B_1^u \to B_{0}^u\) as
\begin{align*}
\delta_0^u\ab(()_{\varsort}) &= d\ab(1_{\varsort}), \quad (X \in \varsorts)\\
\delta_1^u(\omega) &= \sum_{i=1}^n \partial_i(\omega)_{1_{\varobj}}\ab(\pi_i^{\varobj})^* ()_{\varsort_i} - \omega^*()_{\varsort[1]}
\quad (\omega : \vec X \to Y, \vec X \in \varsorts^*, Y \in \varsorts).
\end{align*}

To define \(\delta_n^u : B_n^u \to B_{n-1}^u\) for \(n>1\), we give a semi-simplicial structure \(d_j : B_n^u \to B_{n-1}^u\) (\(j = 0,\dots,n\)) as
\begin{align*}
d_0(\omega_1, \vec\omega_2, \dots, \vec\omega_n) &=
\sum_{i=1}^{m_2} \partial_i(\omega_1)_{\vec \omega_2} (\omega_{2,i},\vec\omega_3, \vec\omega_4,\dots,\vec\omega_n) \quad (\vec \omega_2 = \ab<\omega_{2,1},\dots,\omega_{2,m_2}>),\\
d_j(\omega_1, \vec\omega_2, \dots, \vec\omega_n) &=
(\omega_1,\vec\omega_2,\dots,\vec\omega_{j-1},\vec\omega_j\vec\omega_{j+1}, \vec\omega_{j+2},\dots,\vec \omega_n) \quad (0 < j < n),\\
d_n(\omega_1, \vec\omega_2, \dots, \vec\omega_n) &= \vec\omega_n^*(\omega_1, \vec\omega_2, \dots, \vec\omega_{n-1}).
\end{align*}

Then, define \(\delta_n^u : B_n^u \to B_{n-1}^u\) as
\[
\delta_{n}^u(\omega_1, \vec\omega_2, \dots, \vec\omega_n) =
\sum_{k=0}^n (-1)^k d_k(\omega_1, \vec\omega_2, \dots, \vec\omega_n).
\]

We can show \(\delta_n^u\delta_{n+1}^u = 0\) by the following lemma.
\begin{lemma}
\(d_id_j = d_{j-1}d_i\) if \(i < j\).
\end{lemma}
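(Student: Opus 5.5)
The plan is to verify the semi-simplicial identities \(d_id_j = d_{j-1}d_i\) (\(i<j\)) directly on the free generators \((\omega_1,\vec\omega_2,\dots,\vec\omega_{n+1})\) of \(B_{n+1}^u\). Both sides are \(\cU_\varlaw\)-linear, so agreement on generators suffices. I will sort the cases by whether \(i\) and \(j\) are \(0\), interior (\(0<\cdot<n+1\)), or \(n+1\). The identities to be used are: associativity in \(\varlaw\), the chain rule relation in \(U^{\vec X}\) (the \(\partial_i(\omega\circ\langle\omega'_1,\dots,\omega'_k\rangle)\) relation), and \cref{eqn:alpha_star}.

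\textbf{Purely combinatorial cases.} When both \(i,j\) are interior, each \(d\) only composes adjacent entries. There are two subcases. If \(j>i+1\), the two compositions act on disjoint positions and commute after reindexing. If \(j=i+1\), both sides produce \((\dots,\vec\omega_i\vec\omega_{i+1}\vec\omega_{i+2},\dots)\) by associativity. Take \(i\) interior and \(j=n+1\): both sides equal \(\vec\omega_{n+1}^*\) applied to the generator in which the \(i\)-th composition has been done. The exception is \(i=n\), where both sides give \((\vec\omega_n\vec\omega_{n+1})^*(\omega_1,\dots,\vec\omega_{n-1})\), using \((\vec\alpha\vec\beta)^*=\vec\beta^*\vec\alpha^*\). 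That identity follows from the composition law of \(\cU_\varlaw\), since \(\In_{\vec\beta}(1)\circ\In_{\vec\alpha}(1)=\In_{\vec\alpha\vec\beta}(1)\). Take \(i=0\) and \(j=n+1\): \(d_0d_{n+1}\) gives \(\sum_k\partial_k(\omega_1)_{\vec\omega_2}\,\vec\omega_{n+1}^*(\omega_{2,k},\dots)\) after moving the scalar \(\vec\omega_{n+1}^*\) through by linearity. \(d_nd_0\) gives \(\sum_k\vec\omega_{n+1}^*\partial_k(\omega_1)_{\vec\omega_2}(\omega_{2,k},\dots)\). The order mismatch is the only subtlety. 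I expect it to resolve because \(d_0\) applied to \(\vec\omega_{n+1}^*x\) is computed by \(\cU_\varlaw\)-linearity: \(d_0(\vec\omega_{n+1}^*x)=\vec\omega_{n+1}^*d_0(x)\). The two sides therefore agree literally. (Care is needed when \(n+1=2\), where \(d_0\) lands in \(B_1^u\); the formula is the same.)

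\textbf{Main case \(i=0\), \(j=1\).} This is the expected obstacle. Write \(\vec\omega_3=\langle\omega_{3,1},\dots,\omega_{3,m_3}\rangle\). Then
\[
d_0d_1(\omega_1,\vec\omega_2,\vec\omega_3,\dots)=\sum_{l}\partial_l(\omega_1)_{\vec\omega_2\vec\omega_3}(\omega_{3,l},\vec\omega_4,\dots),
\]
\[
d_0d_0(\omega_1,\vec\omega_2,\vec\omega_3,\dots)=\sum_k\sum_l\partial_k(\omega_1)_{\vec\omega_2}\,\partial_l(\omega_{2,k})_{\vec\omega_3}(\omega_{3,l},\vec\omega_4,\dots).
\]
Here \(\partial_l(\omega_1)_{\vec\omega_2\vec\omega_3}\) is read with \(\omega_1\) as an operation applied to the tuple \(\vec\omega_2\vec\omega_3\). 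To compare the two, I will use the chain rule relation in \(\cR^{\vec X}_\varlaw\), with \(\omega=\omega_1\), \(\omega'_k=\omega_{2,k}\) and \(\vec\sigma=\vec\omega_3\). It gives \(\partial_l(\omega_1\vec\omega_2)_{\vec\omega_3}=\sum_k\partial_k(\omega_1)_{\vec\omega_2\vec\omega_3}\partial_l(\omega_{2,k})_{\vec\omega_3}\).

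Matching indices is delicate here. The left side of \(d_0d_1\) should be \(\partial_l(\omega_1\vec\omega_2)_{\vec\omega_3}\), since \(d_1\) composes \(\vec\omega_2\vec\omega_3\) into the tuple fed to \(\omega_1\). Applying \(d_0\) afterwards then differentiates \(\omega_1\vec\omega_2\) at \(\vec\omega_3\) only if the convention is that \(d_0\) differentiates the first entry at the second. With that convention, \(d_1\) yields \((\omega_1\vec\omega_2,\vec\omega_3,\dots)\) and \(d_0\) gives \(\sum_l\partial_l(\omega_1\vec\omega_2)_{\vec\omega_3}(\omega_{3,l},\dots)\). On the other side, \(d_0d_0\) gives \(\sum_{k,l}\partial_k(\omega_1)_{\vec\omega_2\vec\omega_3}\partial_l(\omega_{2,k})_{\vec\omega_3}(\omega_{3,l},\dots)\). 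Here the subscript \(\vec\omega_2\vec\omega_3\) appears because the first \(d_0\) leaves \(\partial_k(\omega_1)_{\vec\omega_2}\) as a scalar of \(\cU_\varlaw\) at object \(\omega_{2,k}\vec\omega_3\cdots\). I will have to check that the \(\cU_\varlaw\)-action, together with \cref{eqn:alpha_star} (2), transports the subscript correctly, namely that \(\vec\alpha^*\partial_i(\omega)_{\vec\sigma}=\partial_i(\omega)_{\vec\sigma\vec\alpha}\vec\alpha^*\). Once that holds, the chain rule finishes this case. This bookkeeping of where the subscripts live, via the \(\In_{\vec\alpha}\) components, is the step I would check most carefully.

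\textbf{Remaining cases.} For \(i=0\) and \(j\ge2\) interior, \(d_j\) does not touch \(\omega_1\) or \(\vec\omega_2\). Both sides are therefore \(\sum_k\partial_k(\omega_1)_{\vec\omega_2}\) applied to the same composed tail, which gives \(d_0d_j=d_{j-1}d_0\). For \(i=1\), \(j=2\) and similar cases, associativity suffices.
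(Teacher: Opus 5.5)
Your route matches the paper's: check the identities on generators, use associativity (and \((\vec\alpha\vec\beta)^*=\vec\beta^*\vec\alpha^*\)) for the compositional cases, and use the chain-rule relation in \(U^{\vec X}\) for \(d_0d_1=d_0d_0\). That last case is the only one the paper writes out. The problem is in the case \((i,j)=(0,n+1)\), which the paper calls straightforward; your argument there does not work.

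For \(\tau=(\omega_1,\vec\omega_2,\dots,\vec\omega_{n+1})\), put \(\vec\sigma=\vec\omega_2\cdots\vec\omega_n\). Computed correctly, the two sides are
\begin{align*}
d_0d_{n+1}\tau&=\sum_k \vec\omega_{n+1}^*\,\partial_k(\omega_1)_{\vec\sigma}\,(\omega_{2,k},\vec\omega_3,\dots,\vec\omega_n),\\
d_nd_0\tau&=\sum_k \partial_k(\omega_1)_{\vec\sigma\vec\omega_{n+1}}\,\vec\omega_{n+1}^*\,(\omega_{2,k},\vec\omega_3,\dots,\vec\omega_n).
\end{align*}
You have the order of the coefficients swapped in both, and the subscripts also differ. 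Linearity of \(d_0\) is exactly what produces the first expression, so it cannot identify it with the second; the two sides are not literally equal. What closes this case is \cref{eqn:alpha_star}(2) with \(\vec\alpha=\vec\omega_{n+1}\):
\[
\vec\alpha^*\partial_k(\omega_1)_{\vec\sigma}=\In_{\vec\alpha}\bigl(\partial_k(\omega_1)_{\vec\sigma\vec\alpha}\bigr)=\partial_k(\omega_1)_{\vec\sigma\vec\alpha}\,\vec\alpha^*.
\]
This is the \(\cR_\varlaw^{\vec\alpha}\)-twist in the composition of \(\cU_\varlaw\), and this case is where it is actually needed.

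In the \((0,1)\) case the situation is reversed: that commutation rule plays no role, and using it to transport the subscript would give the wrong morphism. The composite \(\vec\alpha^*\partial_k(\omega_1)_{\vec\omega_2}\) lies in the \(\vec\alpha\)-component and has source \(\omega_{2,k}\), not \(\omega_{2,k}\vec\alpha\). The coefficient that \(d_0\) attaches to \((\omega_{2,k},\vec\omega_3,\dots,\vec\omega_{n+1})\) must go from \(\omega_{2,k}\vec\omega_3\cdots\vec\omega_{n+1}\) to \(\omega_1\vec\omega_2\cdots\vec\omega_{n+1}\). So the paper's \(\partial_k(\omega_1)_{\vec\omega_2}\) is shorthand for the identity-component morphism \(\partial_k(\omega_1)_{\vec\omega_2\vec\rho}\), where \(\vec\rho=\vec\omega_3\cdots\vec\omega_{n+1}\). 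Read this way from the start:
\begin{itemize}
\item \(d_0d_1\tau\) carries the coefficient \(\partial_l(\omega_1\vec\omega_2)_{\vec\rho}\);
\item \(d_0d_0\tau\) carries \(\sum_k\partial_k(\omega_1)_{\vec\omega_2\vec\rho}\,\partial_l(\omega_{2,k})_{\vec\rho}\);
\item the chain rule with \(\vec\sigma=\vec\rho\) identifies them immediately.
\end{itemize}
A smaller point: for \(i=0\), \(j=2\), the face \(d_2\) does touch \(\vec\omega_2\). You need \((\vec\omega_2\vec\omega_3)_k=\omega_{2,k}\vec\omega_3\) to match \(d_1\) applied to \((\omega_{2,k},\vec\omega_3,\dots)\).
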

\begin{proof}
Suppose \(i=0\) and \(j = 1\). Then, we have
\begin{align*}
d_0d_1(\omega_1, \vec\omega_2, \dots, \vec\omega_n) &=
d_0(\omega_1\vec\omega_{2}, \vec\omega_3, \dots, \vec\omega_n) \\
&= \sum_{j=1}^{m_2} \partial_j(\omega_1\vec\omega_2) (\omega_{3,j},\vec\omega_4,\dots,\vec\omega_n) \quad (\vec \omega_3 = \ab<\omega_{3,1},\dots,\omega_{3,m_3}>)\\
d_0d_0(\omega_1, \vec\omega_2, \dots, \vec\omega_n) &=
\sum_{i=1}^{m_2} \partial_i(\omega_1) d_0'(\omega_{2,i},\vec\omega_3,\dots,\vec\omega_n) \quad (\vec \omega_2 = \ab<\omega_{2,1},\dots,\omega_{2,m_2}>)\\
&= \sum_{i=1}^{m_2} \partial_i(\omega_1) \sum_{j=1}^{m_3} \partial_k(\omega_{2,i}) (\omega_{3,j},\vec\omega_4,\dots,\vec\omega_n).
\end{align*}
The other cases are straightforward.
\end{proof}

To show that \(\ker \delta_{n+1}^u = \im \delta_{n}^u\),
we define abelian group homomorphisms \(s_0 : \kahlerdiffs_\varlaw(\omega) \to B_0^u(\omega)\) and \(s_{n+1} : B_n^u(\omega) \to B_{n+1}^u(\omega)\) for each \(\omega\) in \(\varlaw\) and \(n \ge 0\) that satisfies
\begin{equation}\label{eqn:cont_hom}
s_{n+1}(\partial_i(\omega')_{\vec\sigma} c) = \partial_i(\omega')_{\vec\sigma} s_{n+1}(c)
\end{equation}
for any \(c \in B_n^u(\omega)\).
For \(d\omega  = \sum_{i=1}^n \partial_i(\omega)_{\ab<\pi_1,\dots,\pi_n>}\pi_i^*d1_{X_i} \in \kahlerdiffs_\varlaw(\omega)\), let \(s_0(d\omega) = \sum_{i=1}^n\partial_i(\omega)_{\ab<\pi_1,\dots,\pi_n>}\pi_i^*()_{X_i}\).
It is easy to see that \(s_0\) is well-defined.

For \(c=\vec\alpha^*(\omega_1,\dots,\vec\omega_n) \in B_n^u(\omega_1\dots\vec\omega_n\vec\alpha)\), let \(s_{n+1}(c) = (-1)^n(\omega_1,\dots,\vec\omega_n,\vec\alpha)\) and then \(s_{n+1}\) extends to a unique abelian group homomorphism satisfying \eqref{eqn:cont_hom}.

\begin{lemma}
\(\delta_0s_0 = 1_{B_0^u}\), and, for each \(n\), \(s_n\delta_n + \delta_{n+1}s_{n+1} = 1_{B_n^u}\).
\end{lemma}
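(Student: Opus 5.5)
The plan is to check both identities on a convenient generating set, using that every map involved is compatible with the action of the $\partial_i(\omega')_{\vec\sigma}$. The maps $\delta^u_\bullet$ are $\cU_\varlaw$-linear, and each $s_{n+1}$ commutes with left multiplication by $\partial_i(\omega')_{\vec\sigma}$ by \eqref{eqn:cont_hom}. Hence both sides of $s_n\delta_n+\delta_{n+1}s_{n+1}=1$ commute with this action. By \cref{eqn:alpha_star}, every morphism of $\cU_\varlaw$ is a finite sum of terms $\partial_{i_1}(\omega_1)_{\vec\sigma_1}\cdots\partial_{i_k}(\omega_k)_{\vec\sigma_k}\vec\alpha^*$. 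So $B_n^u(\omega)$ is spanned, as an abelian group on which the $\partial$'s act, by elements $c=\vec\alpha^*(\omega_1,\vec\omega_2,\dots,\vec\omega_n)$. It therefore suffices to verify the identities on such $c$, and on $d(\omega)=\sum_i\partial_i(\omega)_{1}\pi_i^*d1_{X_i}$ for the first claim. I read the first claim as $\delta_0 s_0=1_{\kahlerdiffs_\varlaw}$.

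\emph{The identity $\delta_0s_0=1$ and the case $n=0$.} The first identity is immediate: $\delta_0$ sends $\pi_i^*()_{X_i}$ to $\pi_i^*d1_{X_i}=d\pi_i$, and the defining relation of $\kahlerdiffs_\varlaw$ gives back $d\omega$. For $n=0$ take $c=\alpha^*()_X$ with $\alpha:\vec X\to X$. Then $s_0\delta_0(c)=s_0(d\alpha)=\sum_i\partial_i(\alpha)_1\pi_i^*()_{X_i}$. On the other hand, $\delta_1 s_1(c)=\delta_1(\alpha)=\sum_i\partial_i(\alpha)_1\pi_i^*()_{X_i}-\alpha^*()_X$. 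So the two sums cancel and only $\pm c$ survives. This already shows the relative sign depends on the convention for $s_1$ and $\delta_1$. I would fix the signs, by replacing $(-1)^n$ with $(-1)^{n+1}$ in $s_{n+1}$ if necessary, so that the surviving term is $+c$. The statement is then the standard contracting homotopy identity.

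\emph{General $n\ge 1$.} I would expand
\[
\delta_{n+1}s_{n+1}(c)=\pm\sum_{k=0}^{n+1}(-1)^k d_k(\omega_1,\dots,\vec\omega_n,\vec\alpha)
\]
and compare it term by term with $s_n\delta_n(c)=\sum_{k=0}^n(-1)^k s_n(\vec\alpha^*d_k(\omega_1,\dots,\vec\omega_n))$. The terms match as follows.
\begin{itemize}
\item For $0<k<n$, $d_k$ of the extended tuple is $(\dots,\vec\omega_k\vec\omega_{k+1},\dots,\vec\omega_n,\vec\alpha)$, which equals $s_n(\vec\alpha^*d_k(\omega_1,\dots,\vec\omega_n))$ up to sign.
\item For $k=n$, $d_n$ of the extended tuple is $(\omega_1,\dots,\vec\omega_{n-1},\vec\omega_n\vec\alpha)$. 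This matches $s_n(\vec\alpha^*\vec\omega_n^*(\omega_1,\dots,\vec\omega_{n-1}))=s_n((\vec\omega_n\vec\alpha)^*(\dots))$, using $\vec\alpha^*\vec\omega_n^*=(\vec\omega_n\vec\alpha)^*$ from the composition law in $\cU_\varlaw$.
\item For $k=n+1$, the top face gives exactly $\vec\alpha^*(\omega_1,\dots,\vec\omega_n)=c$, the identity term.
\end{itemize}
All matched terms carry opposite signs, because $s$ shifts the parity by one. So everything cancels except $c$.

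The one genuinely delicate term, which I expect to be the main obstacle, is $k=0$. Here $d_0(\omega_1,\vec\omega_2,\dots,\vec\omega_n,\vec\alpha)=\sum_i\partial_i(\omega_1)_{\vec\omega_2}(\omega_{2,i},\dots,\vec\omega_n,\vec\alpha)$. To match it with $s_n(\vec\alpha^*d_0(\dots))$, I would use \cref{eqn:alpha_star}(2) to rewrite $\vec\alpha^*\partial_i(\omega_1)_{\vec\omega_2}=\partial_i(\omega_1)_{\vec\omega_2\vec\alpha}\vec\alpha^*$. Then \eqref{eqn:cont_hom} pulls $\partial_i(\omega_1)_{\vec\omega_2\vec\alpha}$ out of $s_n$. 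One has to check carefully that the subscript $\vec\omega_2\vec\alpha$ produced this way agrees with the subscript $\vec\omega_2$ appearing in $d_0$ of the longer tuple, read inside the correct component of $\cU_\varlaw(f,g)=\bigoplus_{\vec\beta}\cR^{\varobj}_\varlaw(f\vec\beta,g)$. I expect this to go through via the convention that $\partial_i(\omega)_{\vec\sigma}$ means $\In_{1}(\cdots)$ together with \cref{eqn:alpha_star}(1). The remaining work is sign bookkeeping, which I would do once in general rather than case by case.
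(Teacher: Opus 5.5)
Your proposal is correct and follows the paper's proof. Like the paper, you reduce to generators $\vec\alpha^*(\omega_1,\dots,\vec\omega_n)$ via \eqref{eqn:cont_hom}, handle $n=0$ by direct computation, and for $n\ge 1$ show that the top face of $s_{n+1}(c)$ returns $c$ while $d_k s_{n+1}=-s_n d_k$ for $k\le n$; your reading of the first claim on $\kahlerdiffs_\varlaw$ and your sign fix $(-1)^{n+1}$ are exactly what the paper's own computations tacitly use. The $k=0$ worry dissolves once you note that the subscript in $d_0$ must be the full composite $\vec\omega_2\cdots\vec\omega_n$, since that is the only reading that gives a morphism $\omega_{2,i}\vec\omega_3\cdots\vec\omega_n\to\omega_1\vec\omega_2\cdots\vec\omega_n$ in $\cU_\varlaw$, so \cref{eqn:alpha_star}(2) alone produces precisely the coefficient $\partial_i(\omega_1)_{\vec\omega_2\cdots\vec\omega_n\vec\alpha}$ of $d_0$ on the extended tuple.
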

\begin{proof}
The first equality is shown as
\begin{align*}
\delta_0\ab(s_0(d\omega)) &= \delta_0\ab(\sum_{i=1}^n \partial_i(\omega)_{\ab<\pi_1,\dots,\pi_n>} \pi_i^* ()_{\varsort_i})  = \sum_{i=1}^n \partial_i(\omega)_{\ab<\pi_1,\dots,\pi_n>} \pi_i^* d(1_{\varsort_i})  = d\omega.
\end{align*}

For the second equality with \(n=0\), we have
\begin{align*}
s_0\ab(\delta_0\ab(\alpha^* ()_{\varsort[1]})) &= s_0\ab(d(\alpha) )\\
&= s_0\ab( \sum_{j=1}^m \partial_j(\alpha)_{\ab<\pi_1,\dots,\pi_m>} \pi_j^* d(1_{\varsort_j}) )\\
&= \sum_{j=1}^m \partial_j(\alpha)_{\ab<\pi_1,\dots,\pi_m>} (\pi_j^{\varobj})^* ()_{\varsort_j}
\end{align*}
\begin{align*}
\delta_1\ab(s_1(\alpha^* ()_{\varsort[1]})) &= -\delta_1\ab( (\alpha) ) \\
&= -\sum_{j=1}^m \partial_j(\alpha)_{\ab<\pi_1,\dots,\pi_m>} (\pi_j^{\varobj})^* ()_{\varsort_j} +  \alpha^* ()_{\varsort[1]}
\end{align*}
Thus, \(s_0\delta_0+ \delta_1 s_0 = 1_{B_0^u}\).

To prove \(s_n\delta_n + \delta_{n+1}s_{n+1} = 1_{B_n^u}\) for \(n > 0\),
it is enough to show \(d_{n+1}s_{n+1} = 1_{B_{n+1}^u}\) and \(d_ks_{n+1} = - s_n d_k\) for \(k = 0,\dots,n\)
because then we have
\begin{align*}
\delta_{n+1}s_{n+1} = \sum_{k=0}^{n+1} (-1)^k d_k s_{n+1} &= \sum_{k=0}^{n} (-1)^k s_n d_k + (-1)^{2(n+1)}1_{B_n^u} \\
&= - s_n \delta_n + 1_{B^u_n}.
\end{align*}
We can show \(d_{n+1}s_{n+1} = 1_{B_{n+1}^u}\) as
\begin{align*}
d_{n+1} s_{n+1}(\vec\alpha^*(\omega_1,\vec\omega_2,\dots,\vec\omega_n))
&= d_{n+1}\ab((-1)^{n+1} (\omega_1,\vec\omega_2,\dots,\vec\omega_n,\vec\alpha) ) \\
&= (-1)^{n+1}\vec \alpha^*(\omega_1,\vec\omega_2,\dots,\vec\omega_n),
\end{align*}
and 
\(
d_k s_{n+1} = - s_n d_k
\)
is also straightforward.
\end{proof}

\begin{remark}
When we have \((\omega_1,\vec\omega_2,\dots,\vec\omega_n)\), we often write like ``\(\vec\omega_i\) (\(i=1,\dots,n\))'', and for \(i=1\) it means that \(\vec\omega_1=\omega_1\).
\end{remark}

\subsection{Normalized bar resolution}

In this subsection, we define normalized bar resolution by applying algebraic discrete Morse theory to the unnormalized bar resolution defined in the previous subsection.
Here is the informal idea:
We want to regard a cell \((\omega_1,\vec\omega_2,\dots,\vec\omega_n)\) as ``normalized'' if 
\(\vec\omega_i\) is not a partial permutation for each \(i\in\{1,\dots,n\}\).
In addition, for any permutation \(\vec\pi\), we do not want to regard both of the two cells \((\omega_1,\vec\omega_2,\dots,\vec\omega_n)\), \((\omega_1,\vec\omega_2,\dots,\vec\omega_i\vec\pi,\vec\pi^{-1}\vec\omega_{i+1},\dots,\vec\omega_n)\) as ``normalized''.
We shall define a Morse matching \(\cM'\) such that \(\cM'\)-critical cells are exactly the cells that are ``normalized'' in this sense.

We define an equivalence relation \(\sim_\Pi\) on morphisms in \(\varlaw\) as \(\vec\omega \sim_\Pi \vec\omega' \iff\vec \omega =\vec \omega' \vec\pi\) for some permutation \(\vec\pi\).
For each equivalence class \([\vec\omega]_{\sim_\Pi}\), we choose a representative \(\vec\omega' \in [\vec\omega]_{\sim_\Pi}\).
For the equivalence class of an identity, we choose the identity itself as the representative.
Note that any morphism \(\vec\omega\) can be uniquely decomposed as \(\vec\omega = \vec\omega' \vec\pi\) for the chosen representative \(\vec \omega'\) and some permutation \(\vec\pi\).

A morphism \(\vec\omega : \vec X \to \vec Y\) is \emph{essential} if
\(\vec\omega\) is efficient and the chosen representative of its equivalence class \([\vec\omega]_{\sim_\Pi}\).
\begin{lemma}
Any morphism \(\vec\omega\) can be uniquely decomposed as \(\vec\omega = \vec\omega' \vec\pi\) for some essential \(\vec\omega'\) and some partial permutation \(\vec\pi\).
\end{lemma}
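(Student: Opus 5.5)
The plan is to prove existence by a minimality argument and uniqueness by showing that the ``support'' of \(\vec\omega\) (the set of input coordinates it genuinely depends on) is well defined.

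\textbf{Existence.} Fix \(\vec\omega : \vec X \to \vec Z\) with \(\vec X = X_1\timesdots X_n\). Partial permutations compose to partial permutations, since injections compose. The identity is a partial permutation, so factorizations \(\vec\omega = \vec\omega''\vec\pi\) with \(\vec\pi\) a partial permutation exist. Among them, choose one whose domain \(\vec Y\) of \(\vec\omega''\) has minimal length. Then \(\vec\omega''\) is efficient. Otherwise \(\vec\omega'' = \vec\omega'''\vec\pi'\) for a partial permutation \(\vec\pi'\) that is not a permutation, and \(\vec\omega = \vec\omega'''(\vec\pi'\vec\pi)\) would have a strictly shorter middle object.

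Next, write \(\vec\omega'' = \vec\omega'\vec\pi_0\), where \(\vec\omega'\) is the chosen representative of \([\vec\omega'']_{\sim_\Pi}\) and \(\vec\pi_0\) is a permutation. Efficiency is invariant under pre-composition with permutations, because permutations are isomorphisms: if \(\vec\omega' = \vec\omega'''\vec\rho\), then \(\vec\omega'' = \vec\omega'''(\vec\rho\vec\pi_0)\), and \(\vec\rho\vec\pi_0\) is a permutation iff \(\vec\rho\) is. Hence \(\vec\omega'\) is essential, and \(\vec\omega = \vec\omega'(\vec\pi_0\vec\pi)\) is the required factorization.

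\textbf{Uniqueness.} A factorization \(\vec\omega = \vec\omega'\vec\pi\) through a partial permutation with injection \(u\) is the same as a factorization of \(\vec\omega\) through the projection \(\vec X \to \vec X_A\) onto the coordinates \(A = \im u\), up to a permutation of \(\vec X_A\). Call such \(A\) a \emph{support} of \(\vec\omega\); the factorization is efficient exactly when \(A\) is a minimal support. I will prove that the supports are closed under binary intersection. Then there is a unique minimal support \(A_0\), and the essential factor is determined up to \(\sim_\Pi\).

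The choice of representatives then fixes \(\vec\omega'\) uniquely. For \(\vec\pi\), note that \(\vec\pi\) is determined by the injection \(u\), and \(u\) is determined once \(\vec\omega'\) and \(A_0\) are fixed. If two injections \(u, v\) with image \(A_0\) both work, then \(\vec\omega' = \vec\omega'\vec\rho\) for the permutation \(\vec\rho\) corresponding to \(v^{-1}u\). I then need to check that this forces \(\vec\rho = 1\). For this I would use efficiency of \(\vec\omega'\) together with the same coordinate-dependence argument: each input coordinate is a minimal support element, and it must be sent to itself.

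\textbf{Main obstacle.} The expected difficulty is the intersection property: from \(\vec\omega = \vec a\,\vec\pi_A = \vec b\,\vec\pi_B\), deduce a factorization through \(\vec\pi_{A\cap B}\). My first attempt is to pre-compose with a ``collapsing'' morphism \(\vec X_{A} \to \vec X\). This morphism is the identity on the coordinates in \(A\) and sends every coordinate \(j \in B\setminus A\) to some coordinate of \(A\cap B\), or of \(A\), of the same sort. Since \(\vec\omega\) does not depend on \(B\setminus A\) through the \(A\)-description, this yields \(\vec a = \vec b\circ(\text{substitution})\), which depends only on \(A\cap B\) plus collapsed copies. A second collapse then gives \(\vec a = \vec c\,\vec\pi_{A\cap B}\).

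This step needs a coordinate of the right sort to exist. In \(\syn{\varsign,E}\) I would instead argue via \cref{lem:term_effmor}: the essential part corresponds to the tuple of terms with context \(\Var(t_1)\cup\dots\cup\Var(t_m)\), which is canonical up to \(\sim_\alpha\). For a general \(\varlaw\), I would handle missing sorts by a case analysis. In the worst case this requires the standing assumption that \(\varlaw\) is presented, so that every morphism is represented by terms and the variable-set argument of \cref{lem:term_effmor} applies modulo \(\approx_E\).
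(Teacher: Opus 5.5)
Your existence argument is correct. It supplies the induction that the paper's ``follows by definition'' skips: a factorization with shortest middle object has an efficient first factor, and passing to the chosen representative preserves efficiency because permutations are invertible.

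The uniqueness part, however, has a gap that cannot be closed at the generality of the statement. The step where \(\vec\omega'\vec\rho=\vec\omega'\) should force \(\vec\rho=1\) is false. Efficiency only rules out factorizations through non-bijective partial permutations; it says nothing about symmetries of \(\vec\omega'\). In the theory of commutative monoids, \(\vec\omega'=(x_1,x_2\mid x_1+x_2)\) is efficient and its \(\sim_\Pi\)-class is a singleton, so it is essential. Yet \(\vec\omega'=\vec\omega'\circ 1=\vec\omega'\circ\mathrm{swap}\) are two distinct decompositions.

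Your intersection property also fails, and not only for sort reasons. Take a single unary \(f\), no constants, and the axiom \(f(x)\approx f(y)\). The morphism \((x,y\mid f(x))\) equals both \(f\pi_1\) and \(f\pi_2\), with \(f\) essential and \(\pi_1\neq\pi_2\). But it does not factor through the empty product, because there are no closed terms. With two sorts the situation is worse: given operations \(a:X\to Z\), \(b:Y\to Z\) and the axiom \(a(x)\approx b(y)\), the morphism \((x,y\mid a(x))\) equals \(a\pi_1=b\pi_2\), so even the essential factor is not unique.

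Your fallback does not rescue this. \cref{lem:term_effmor} is about \(\mathrm{Syn}(\Sigma)\) without equations, and variable sets are not \(\approx_E\)-invariant. Moreover, assuming that \(\varlaw\) is presented is no restriction, since every Lawvere theory has an equational presentation.

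The paper's proof has the same hole. It asserts that, since \(\vec\omega'\) is essential, \(\vec\pi''\vec\iota\) must be an identity. But \(\vec\pi''\vec\iota\) need not even be a partial permutation, because \(wv\) need not be injective. The section \(w\) also requires sort-matching coordinates, which is the very issue you flagged.

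What makes the lemma true is the hypothesis of Section~5: \(\varlaw\) is presented by a complete TRS \(R\), so morphisms are represented by tuples of \(R\)-normal forms. Under that hypothesis the argument goes as follows.
Injective renamings of variables preserve \(R\)-irreducibility. Suppose \(\vec\omega=(\Gamma\mid\vec t)\) equals \(\vec\omega'\vec\pi\), with \(\vec\omega'=(y_1,\dots,y_n\mid\vec t')\) and \(\vec\pi\) given by the injection \(u\). Then \(\vec t'[x_{u(1)}/y_1,\dots,x_{u(n)}/y_n]\) is a normal form, hence syntactically equal to \(\vec t\). Efficiency of \(\vec\omega'\) implies that every \(y_j\) occurs in \(\vec t'\). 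Three things follow:
\begin{itemize}
\item The image of \(u\) indexes exactly the variables of \(\vec t\); this is your canonical minimal support.
\item \(\vec t'\) is determined up to renaming, so \(\vec\omega'\) is unique once representatives are fixed.
\item \(u(j)\) is forced to be the index of the variable of \(\vec t\) at any position where \(y_j\) occurs in \(\vec t'\). This is what gives \(\vec\rho=1\).
\end{itemize}
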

\begin{proof}
There is at least one such decomposition since, if \(\vec\omega\) is essential, we can take \(\vec\omega' = \vec\omega\) and \(\vec\pi = \Id\), and if \(\vec\omega\) is not essential, the existence follows by definition.

Suppose that there are two such decompositions, \(\vec\omega = \vec\omega' \vec\pi' = \vec\omega'' \vec\pi''\) for essential \(\vec\omega',\omega''\) and partial permutations \(\vec\pi',\vec\pi''\).
Since \(\vec\pi', \vec\pi''\) are partial permutations, they can be written as \(\vec\pi' = \ab<\pi_{u(1)},\dots,\pi_{u(n)}>\), \(\vec\pi'' = \ab<\pi_{v(1)},\dots,\pi_{v(k)}>\) for some injections \(u : \{1,\dots,n\} \to \{1,\dots,m\} \), \(v : \{1,\dots,k\} \to \{1,\dots,m\}\).
Let \(w : \{1,\dots,m\} \to \{1,\dots,n\}\) be an arbitrary section of \(u\), i.e., a function satisfying \(wu(i) = i\) for any \(i =1,\dots,n\),
and let \(\vec\iota = \ab<\pi_{w(1)},\dots,\pi_{w(m)}>\).
Then, \(\vec\pi'\vec\iota\) is an identity, hence
\(\vec\omega\vec\iota = \vec\omega' = \vec\omega''\vec\pi''\vec\iota\).
Since \(\vec\omega'\) is essential, \(\vec\pi''\vec\iota = \ab<\pi_{wv(1)},\dots,\pi_{wv(k)}>\) must be an identity, so \(k = n\) and since \(w\) is an arbitrary section of \(u\), \(v = u\).
\end{proof}

For a cell \(\tau = (\omega_1,\vec \omega_2,\dots,\vec\omega_n)\) and an index \(i \in \{1,\dots,n-1\}\),
consider the following six predicates:
\begin{description}
	\item[\(P(\tau,i)\) :] \(i < n\), \(\vec\omega_i\) is essential, \(\vec\omega_{i+1}\) is a partial permutation, and \(\vec\omega_i\vec\omega_{i+1}\) is neither essential nor a partial permutation,
	\item[\(P'(\tau,i)\) :] \(\vec\omega_i\) is not essential and not a partial permutation,
	\item[\(Q(\tau,i)\) :] \(\vec\omega_i\) is an identity and, for the largest integer \(i' \in \{i,\dots,n\}\) such that all \(\vec\omega_i,\dots,\vec\omega_{i'}\) are identities, \(i'-i\) is odd,
	\item[\(Q'(\tau,i)\) :] the same as \(Q(\tau,i)\) but \(i' - i\) is even.
	\item[\(R(\tau,i)\) :] \(i=1\), \(\omega_1 = \pi_k : X_1\timesdots X_n \to X_k\) for some $n > 1$ and \(k < n\), \(\vec\omega_2\) is an identity, and \(j\) is odd for the smallest integer \(j > 1\) such that \(\vec\omega_j\) is not an identity.
	\item[\(R'(\tau,i)\) :] the same as \(R(\tau,i)\) but \(j\) is even.
\end{description}
Define a set of edges \(\cM'\) as
\[
\tau = (\omega_1,\vec\omega_2,\dots,\vec\omega_{i},\vec\omega_{i+1},\dots,\vec\omega_n) \to
(\omega_1,\vec\omega_2,\dots,\vec\omega_{i}\vec\omega_{i+1}, \dots,\vec\omega_n) \in \cM'
\]
if and only if either following (1) or (2) holds: (1) \(R(\tau,i)\) holds, (2) \(P(\tau,i)\) or \(Q(\tau,i)\) holds and \(\vec\omega_j\) is essential and not an identity for each \(j < i\).

\begin{lemma}
\(\cM'\) is a partial matching.
\end{lemma}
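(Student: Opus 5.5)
The plan is to verify the three defining properties separately. Every edge of \(\cM'\) has the form \(\tau \to d_i\tau\), where \(d_i\) composes \(\vec\omega_i\vec\omega_{i+1}\). We must show: (a) every cell is the source of at most one edge of \(\cM'\); (b) every cell is the target of at most one edge of \(\cM'\); (c) no cell is both a source and a target. All three will come from one classification of a cell \(\tau=(\omega_1,\vec\omega_2,\dots,\vec\omega_n)\) by its \emph{first defect}. This is the smallest index \(j\) such that \(\vec\omega_j\) is not (essential and non-identity).

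First I record a preliminary fact. An essential partial permutation is an identity. Indeed, an efficient partial permutation is a permutation, and the chosen representative of the \(\sim_\Pi\)-class of a permutation is the identity. Consequently, at the first defect exactly one of three things happens:
\begin{itemize}
\item type I: \(\vec\omega_j\) is an identity;
\item type N: \(\vec\omega_j\) is neither essential nor a partial permutation, i.e.\ \(P'(\tau,j)\) holds;
\item type \(\Pi\): \(\vec\omega_j\) is a non-identity partial permutation.
\end{itemize}
Also, \(\omega_1=\pi_k\) with \(n>1\) is not efficient, so it is of type \(\Pi\) at \(j=1\).

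For (a), I check that the index \(i\) of any outgoing edge is forced by \(\tau\). Under clause (2) with \(Q(\tau,i)\), all \(\vec\omega_{j'}\) with \(j'<i\) are essential non-identities and \(\vec\omega_i\) is an identity, so \(i=j\) and the type is I. Under clause (2) with \(P(\tau,i)\), \(\vec\omega_i\) is essential. It cannot be an identity, since then \(\vec\omega_i\vec\omega_{i+1}=\vec\omega_{i+1}\) would be a partial permutation. Likewise \(\vec\omega_{i+1}\) cannot be an identity, since then \(\vec\omega_i\vec\omega_{i+1}=\vec\omega_i\) would be essential. Hence \(i+1=j\) and the type is \(\Pi\). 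Under clause (1), \(R(\tau,1)\) gives \(j=1\) with type \(\Pi\); in that case \(P\) cannot apply because there is no index \(i=j-1=0\). So each cell has at most one admissible \(i\), determined by \(j\) and the type.

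For (b), I describe \(\sigma=d_i\tau\) in each case and show that \(\tau\) can be reconstructed from it.
\begin{itemize}
\item If \(P(\tau,i)\) holds, the first defect of \(\sigma\) is at \(i\), with \(\vec\omega_i\vec\omega_{i+1}\) of type N, so \(P'(\sigma,i)\) holds. Then \(\tau\) is recovered uniquely from the preceding lemma's unique decomposition \(\vec\omega_i\vec\omega_{i+1}=\vec\omega'\vec\pi\), with \(\vec\omega'\) essential and \(\vec\pi\) a partial permutation.
\item If \(Q(\tau,i)\) holds, composing two identities shortens the identity run starting at \(i\) by one. Hence \(\sigma\) satisfies \(Q'(\sigma,i)\) at its first defect, of type I, and \(\tau\) is obtained by reinserting an identity at \(i\).
\item If \(R(\tau,1)\) holds, \(\sigma\) still starts with \(\pi_k\), but the run of identities after it is shorter by one. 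The parity flips, so \(R'(\sigma,1)\) holds (or \(\vec\omega_2\) in \(\sigma\) is not an identity, the degenerate end case), and \(\tau\) is again recovered by inserting an identity.
\end{itemize}
The three kinds of targets are distinguished by the type of their first defect (N, I, \(\Pi\) respectively), so no target arises from two different sources.

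For (c), a source has at its first defect one of \(P\), \(Q\), \(R\), and a target one of \(P'\), \(Q'\), \(R'\). I check these are incompatible at that same index. \(Q\) and \(Q'\) are exclusive by parity, and so are \(R\) and \(R'\). \(P'\) is type N, while sources are types I or \(\Pi\).

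I expect the main obstacle to be the boundary bookkeeping. This includes identity runs extending to \(\vec\omega_n\), where \(i'=n\) and the parity must be computed consistently with the existence of a next factor. It also includes the interaction of \(R\) with the degenerate case where all \(\vec\omega_j\), \(j>1\), are identities, and the requirement \(i<n\) in \(P\). I would handle these by writing out the parity conventions for \(i'\) and \(j\) explicitly before doing the case analysis.
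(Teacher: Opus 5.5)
Your proof is correct, and it is organized around a different invariant from the paper's.

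\textbf{The paper's route.} The paper calls uniqueness of sources obvious. For a common target it runs through the clause pairs (1)/(1), (1)/(2), (2)/(2): the unique essential/partial-permutation decomposition settles $P$ against $P$, direct inspection settles $Q$ against $Q$ and $R$ against $R$, and a short contradiction excludes $R$ against $P$ or $Q$. That case analysis has two omissions.
\begin{itemize}
\item It tacitly assumes both edges contract the same index $i$. It writes both sources with $\vec\omega_i,\vec\omega_{i+1}$ and equates everything outside positions $i,i+1$.
\item It never checks that a cell cannot be the source of one edge of $\cM'$ and the target of another. The paper's own definition of a partial matching (no vertex incident to more than one edge) requires this.
\end{itemize}

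\textbf{What your route buys.} Your first-defect classification covers both omissions at once.
\begin{itemize}
\item The position and type of a target's first defect determine which clause and which index produced it. So edges with different indices are handled automatically.
\item Compare the first-defect profiles. Sources have type I with an odd identity run, type $\Pi$ at a position $\ge 2$, or $\pi_k$ at position $1$ followed by an odd-parity run. Targets have type N, type I with an even run, or $\pi_k$ at position $1$ with even parity. These are disjoint, which gives source/target disjointness.
\end{itemize}

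\textbf{Cost.} The ingredients are the same as the paper's: the unique decomposition lemma and reinsertion of an identity. What you pay is the boundary bookkeeping you already flagged. You must fix a convention such as $j=n+1$ when no non-identity follows $\pi_k$. You must also account for the case $j=3$, where the target of an $R$-edge has a non-identity in position $2$. Your argument only uses the parity of the identity run, not the literal wording of $R'$, so either reading of $R'$ works.
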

\begin{proof}
It is obvious by definition that there is no two edges in \(\cM'\) that have the same source.

Suppose that there are two edges with a common target, \(\tau = (\omega_1,\vec \omega_2,\dots,\vec\omega_i,\vec\omega_{i+1},\dots,\vec\omega_n) \to \tau''\) and \(\tau' = (\omega_1',\vec\omega_2',\dots,\vec\omega_i',\vec\omega_{i+1}',\dots,\vec\omega_n') \to \tau''\).
Then, we have \(\omega_1 = \omega_1'\), \(\vec\omega_j = \vec\omega_j'\) for \(j \neq i,i+1\), and \(\vec\omega_i \vec\omega_{i+1} = \vec\omega_i'\vec\omega_{i+1}'\).
\begin{itemize}
	\item Case where (1) for $(\tau,i)$ and (1) for $(\tau',i)$ hold:
		By $R(\tau,i)$, we have $i=1$, $\omega_1 = \pi_k$ for some $k$, and $\vec \omega_2 = \Id$, and by $R(\tau',i)$ and $\omega_1\vec \omega_2 = \omega_1'\vec\omega_2'$, we have $\omega_1 = \omega_1'$ and $\vec \omega_2 = \vec \omega_2'$.
	\item Case where (1) for $(\tau,i)$ and (2) for $(\tau',i)$ hold:
		We show that this case cannot happen. 
		As above, $i=1$, $\omega_1 = \pi_k$ for some $k$, and $\vec\omega_2 = \Id$.
		Assume that $P(\tau',1)$ holds. Then, $\omega_1' $ is essential and $\vec\omega_2'$ is a partial permutation. Since $\pi_k = \omega_1'\vec\omega_2'$ and $\pi_k = \Id \pi_k$, we have $\omega_1' = \Id$ by the previous lemma, and this contradicts to $P(\tau',1)$.
		Assume that $Q(\tau',1)$ holds. In this case, $\omega_1' = \vec\omega_2'=\Id$, but this contradicts to $\omega_1\vec\omega_2 = \pi_k : X_1\timesdots X_n \to X_k$ for $n > 0$.
	\item Case where (2) for $(\tau,i)$ and (2) for $(\tau',i)$ hold:
		Note that the only two subcases are possible: both $P(\tau,i)$ and $P(\tau',i)$ hold or both $Q(\tau,i)$ and $Q(\tau',i)$ hold.
		In the former subcase, we can apply the previous lemma, and the latter subcase, it is immediate that $\omega_i = \vec\omega_i' = \vec\omega_{i+1} = \vec\omega_{i+1}' = \Id$.
\end{itemize}
\end{proof}

By definition, \(\tau\) is \(\cM'\)-collapsible iff (1) or (2) holds.
We say that an \(\cM'\)-collapsible cell \(\tau\) is of \emph{type} \(R\) if \(\tau\) satisfies (1), and of \emph{type} \(P\) (resp. \(Q\)) if \(\tau\) satisfies (2) with \(P(\tau, i)\) (resp. \(Q(\tau,i)\)) for some \(i\).

\begin{lemma}
\(\tau' = (\omega_1,\vec\omega_2,\dots,\vec\omega_n)\) is \(\cM'\)-redundant if and only if either (1') \(R'(\tau',1)\) or (2') there exists \(i\) such that \(P'(\tau',i)\) or \(Q'(\tau',i)\) holds, and, for any \(j < i\), \(\vec\omega_j\) is essential and not an identity.
\end{lemma}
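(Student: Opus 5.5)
We prove the two directions separately, in each case working directly from the definition of \(\cM'\): \(\tau'\) is redundant iff there is a collapsible \(\tau\) with \(\tau\to\tau'\in\cM'\). Such a \(\tau\) has the form \((\omega_1,\dots,\vec\omega_i,\vec\omega_{i+1},\dots,\vec\omega_{n+1})\) with \(\tau' = d_i\tau\). The matched face always merges the \(i\)-th and \((i+1)\)-st entries, so the task is to translate the conditions (1), (2) on \(\tau\) into conditions on \(\tau'\). Throughout, we use the uniqueness of the decomposition \(\vec\omega=\vec\omega'\vec\pi\) (essential followed by partial permutation) from the preceding lemma.

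\emph{Only if.} We split according to the type of \(\tau\).

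\emph{Type \(R\).} Here \(i=1\), \(\omega_1=\pi_k\) and \(\vec\omega_2=\Id\), so \(\tau'=(\pi_k,\vec\omega_3,\dots)\). Deleting one identity from the initial run of identities after \(\pi_k\) shifts the index of the first non-identity by one. So an odd \(j\) for \(\tau\) becomes an even one for \(\tau'\), and \(\tau'\) satisfies \(R'(\tau',1)\). The degenerate case in which the first non-identity of \(\tau'\) sits in position \(2\) needs separate care. One must also check that the convention for ``\(j\)'' (including when no non-identity exists) is compatible on both sides.

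\emph{Type \(Q\).} Merging two identities in a run of identities of odd length parity gives \(\vec\omega_i=\Id\) with run length \(i'-1-i\) even, hence \(Q'(\tau',i)\). The prefix condition on entries \(j<i\) is unchanged.

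\emph{Type \(P\).} \(\vec\omega_i\vec\omega_{i+1}\) is neither essential nor a partial permutation, which is exactly \(P'(\tau',i)\). The prefix condition is again unchanged.

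\emph{If.} Conversely, given \(\tau'\) we reconstruct \(\tau\).

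\emph{Case \(R'\).} Insert an identity after \(\pi_k\); this flips the parity of \(j\) back to odd, so \(R(\tau,1)\) holds.

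\emph{Case \(Q'(\tau',i)\).} Split \(\vec\omega_i=\Id\) as \(\Id,\Id\). The run length increases by one, so \(Q(\tau,i)\) holds, and the prefix condition is inherited.

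\emph{Case \(P'(\tau',i)\).} Write \(\vec\omega_i=\vec\omega'\vec\pi\) with \(\vec\omega'\) essential and \(\vec\pi\) a partial permutation, and take \(\tau\) with entries \(\vec\omega',\vec\pi\) in positions \(i,i+1\). Then \(P(\tau,i)\) holds because \(\vec\omega_i\) is neither essential nor a partial permutation. The prefix condition is inherited, since positions \(j<i\) are unchanged.

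The main obstacle is to show that the disjunction in the statement does not produce spurious cases, i.e.\ that \(\tau'\) with a given property is really the target of the corresponding matched edge rather than being blocked by an earlier index.

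\emph{Interaction of \(R'\) with \(P'\) and \(Q'\).} Consider \(\omega_1=\pi_k\). This \(\omega_1\) is a partial permutation and not essential; by our choice of representatives it equals \(\Id\) only when it is an identity. Hence the prefix condition (``essential and not identity'' for \(j<i\)) forces \(i=1\) in (2') for such cells. But \(P'(\tau',1)\) fails for projections, because they are partial permutations. \(Q'(\tau',1)\) fails when \(n>1\), because \(\pi_k\) is not an identity. Thus (1') and (2') are exclusive and consistent.

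\emph{Ambiguity between an edge and an earlier index.} If \(\tau\) satisfies (2) at \(i\), then it cannot also satisfy (2) at a smaller index, because of the prefix condition. It also cannot satisfy (1), since (1) needs \(\omega_1=\pi_k\), which is not essential-and-non-identity. This exclusivity is what makes each redundant cell correspond to exactly one collapsible cell, and it is what we use to conclude the equivalence.
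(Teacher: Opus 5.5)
This is essentially the paper's argument, and it is correct: for ``only if'' you split on whether the matched collapsible cell is of type \(R\), \(Q\) or \(P\), and for ``if'' you rebuild that cell by inserting an identity after \(\pi_k\), by splitting \(\Id\) into \((\Id,\Id)\), or by factoring \(\vec\omega_i=\vec\omega_i'\vec\pi\) into an essential morphism followed by a partial permutation. The type-\(R\) boundary cases you leave open need no separate treatment if \(R'\) is read as ``\(\omega_1=\pi_k\), and the first index \(j>1\) with \(\vec\omega_j\) not an identity (taken to be one more than the length if there is none) is even'', without the clause ``\(\vec\omega_2\) is an identity''; this reading is forced by the matched edge \((\pi_k,\Id,\vec\omega_3)\to(\pi_k,\vec\omega_3)\) with \(\vec\omega_3\) not an identity, and under it the shift \(j\mapsto j-1\) covers every case uniformly, while your closing exclusivity discussion is unnecessary because the lemma only needs some edge of \(\cM'\) into \(\tau'\), which the definition of \(\cM'\) gives directly.
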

\begin{proof}
It is straight forward to check the following three equivalences.
First, \(R'(\tau',1)\) holds iff \(R(\tau,1)\) holds for \(\tau = (\omega_1,\Id,\vec\omega_2,\dots,\vec\omega_n)\).
Next, for any index \(i\), (2') holds with \(P'(\tau',i)\) iff 
(2) holds with \(P(\tau,i)\) for \(\tau = (\omega_1,\vec\omega_2,\dots,\vec\omega_{i-1}, \vec\omega_i',\vec\pi,\vec\omega_{i+1},\dots,\vec\omega_n)\) where \(\vec\omega_i'\) is essential, \(\vec\pi\) is a partial permutation, and \(\vec\omega_i = \vec\omega_i'\vec\pi\).
Finally, (2') holds with \(Q'(\tau',i)\) iff (2) holds with \(Q(\tau,i)\)
for \(\tau = (\omega_1,\vec\omega_2,\dots,\vec\omega_{i-1}, \Id, \vec\omega_{i+1},\dots,\vec\omega_n)\).
In either case, \(\tau \to \tau'\) is in \(\cM'\), so the ``if''-part follows.

Suppose that \(\tau'\) is \(\cM'\)-redundant for the ``only if''-part.
Then, there exist \(i\), \(\vec\omega_i',\vec\omega_i''\) such that \(\vec\omega_i = \vec\omega_i'\vec\omega_i''\) and \(\tau \to \tau'\) is in \(\cM'\) for \(\tau = (\omega_1,\vec\omega_2,\dots,\vec\omega_i',\vec\omega_i'',\dots,\vec\omega_n)\).
In particular, \(\tau\) is \(\cM'\)-collapsible, so satisfies (1) or (2).
If \(R(\tau,i)\) holds, then \(i=1\), \(\omega_1 = \omega_1' =\pi_k\), and \(\vec\omega_1'' = \Id\), so \(R(\tau',1)\) holds.
The case where (2) holds can be proved similarly.
\end{proof}

\begin{lemma}
\(\tau = (\omega_1,\vec\omega_2,\dots,\vec\omega_n)\) is \(\cM'\)-critical iff \(\vec\omega_i\) is essential and not an identity for any \(i\).
\end{lemma}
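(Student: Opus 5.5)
A cell is $\cM'$-critical exactly when it is neither $\cM'$-collapsible nor $\cM'$-redundant. The plan is therefore to use the characterization of collapsible cells by conditions (1)/(2) and of redundant cells by conditions (1')/(2') from the preceding lemma. We prove both directions by contraposition.

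\emph{If all $\vec\omega_i$ are essential and not identities, then $\tau$ is critical.} We check that none of $P,P',Q,Q',R,R'$ can hold at any index.
\begin{itemize}
\item $P'(\tau,i)$ needs $\vec\omega_i$ non-essential, so it fails.
\item $Q$ and $Q'$ need $\vec\omega_i$ to be an identity, so they fail.
\item $R$ and $R'$ need $\vec\omega_2=\Id$, so they fail.
\item $P(\tau,i)$ needs $\vec\omega_{i+1}$ to be a partial permutation. An essential non-identity morphism cannot be a partial permutation. Indeed, if a partial permutation is efficient (take $\vec\pi$ itself and $\vec\omega'=\Id$), it must be a permutation. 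A permutation is $\sim_\Pi$-equivalent to the identity, and the chosen representative of that class is the identity. So an essential partial permutation is an identity, and $P$ fails.
\end{itemize}
Hence neither (1), (2), (1'), nor (2') holds, and $\tau$ is critical. (Edge case: when $n=1$, $\omega_1=\pi_k$ with $\pi_k$ essential non-identity; only $R$-type conditions could involve it, and these need $n\ge 2$.)

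\emph{If some $\vec\omega_i$ is non-essential or an identity, then $\tau$ is not critical.} Let $i$ be the smallest such index, so $\vec\omega_j$ is essential and not an identity for every $j<i$; this is exactly the side condition in (2) and (2'). We split into cases.
\begin{itemize}
\item \textbf{$\vec\omega_i$ is an identity.} Take the maximal block $\vec\omega_i,\dots,\vec\omega_{i'}$ of identities. Then $Q(\tau,i)$ or $Q'(\tau,i)$ holds according to the parity of $i'-i$, so $\tau$ is collapsible or redundant. (For $i=n$ we get $i'=i$, so $Q'$ applies.)
\item \textbf{$\vec\omega_i$ is neither essential nor a partial permutation.} Then $P'(\tau,i)$ holds, so $\tau$ is redundant.
\item \textbf{$\vec\omega_i$ is a non-identity partial permutation that is not essential.} This is the main obstacle. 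Suppose first $i\ge 2$. Then $\vec\omega_{i-1}$ is essential and $\vec\omega_i$ is a partial permutation. Write $\vec\omega_{i-1}\vec\omega_i=\vec\omega'\vec\pi'$ by the unique decomposition lemma. If the product is neither essential nor a partial permutation, then $P(\tau,i-1)$ holds and $\tau$ is collapsible. If the product is essential or a partial permutation, uniqueness of the decomposition should force $\vec\omega_i$ to be a permutation that is absorbed into the chosen representative. This in turn should force $\vec\omega_i=\Id$, contradicting the case assumption; that such a contradiction is available is what I expect the matching's design to guarantee, and I would verify it first. If instead $i=1$, then $\omega_1$ is a non-identity projection $\pi_k$ (for $n>1$), and we look at the first $\vec\omega_j$ with $j>1$ that is not an identity.
\begin{itemize}
\item If $\vec\omega_2=\Id$, then $R$ or $R'$ holds according to the parity of that $j$, provided $k<n$ as in the definition.
\item If $\vec\omega_2\ne\Id$, then $\tau$ is the image under $\cM'$ of $(\pi_k,\Id,\vec\omega_2,\dots)$, which is matched when the parity is right. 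So the parity conventions of $R$ and $R'$ must be checked to be complementary: $R'(\tau,1)$ covers exactly the cells obtained from type-$R$ cells by composing away the inserted $\Id$.
\end{itemize}
\end{itemize}

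The bookkeeping of these projection and parity subcases, together with the remaining boundary cases ($k=n$, $n=1$, or $\vec\omega_2,\dots,\vec\omega_n$ all identities), is where I would spend the most care. If a stray case appears there, I would handle it by showing directly that it falls under $Q/Q'$.
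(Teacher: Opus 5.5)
Your plan has the same skeleton as the paper's proof: contraposition in both directions, a minimal bad index $i$, and three cases leading to $Q/Q'$, to $P'$, and to $P(\tau,i-1)$. Routing $i=1$ through $R/R'$ fills a case the paper's proof skips, since it invokes $\vec\omega_{i-1}$ even when $i=1$. The gap is the step you leave as ``should'': that $\rho=\vec\omega_{i-1}\vec\omega_i$ is neither essential nor a partial permutation, which is exactly what $P(\tau,i-1)$ requires. This is not guaranteed by the design of $\cM'$; it is a property of $\varlaw$.

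Relative to the preceding unique-decomposition lemma it takes one line, applied to $\rho$ itself:
\begin{itemize}
\item If $\rho$ were essential, the two decompositions $\rho\cdot\Id=\vec\omega_{i-1}\cdot\vec\omega_i$ would force $\vec\omega_i=\Id$.
\item If $\rho$ were a partial permutation, the two decompositions $\Id\cdot\rho=\vec\omega_{i-1}\cdot\vec\omega_i$ would force $\vec\omega_{i-1}=\Id$.
\end{itemize}
That lemma, however, needs the following property: if $\vec\omega\vec\pi=\vec\omega$ with $\vec\omega$ efficient and $\vec\pi$ a permutation, then $\vec\pi=\Id$. This holds for $\varlaw=\syn{\varsign,R}$ with $R$ complete. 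There morphisms are $R$-irreducible tuples in which every variable occurs, and a nontrivial renaming changes such a tuple; this is the setting where the result is used. It fails in general. In the theory of abelian groups (Example~\ref{ex:abelian}), $+$ is essential, because its $\sim_\Pi$-class is $\{+\}$. Since $+\circ\mathrm{swap}=+$, $P(\tau,1)$ fails for $\tau=(+,\mathrm{swap})$, and one checks that no edge of $\cM'$ is incident to $\tau$. So $\tau$ is $\cM'$-critical although $\mathrm{swap}$ is not essential, and your step must explicitly invoke such a hypothesis.

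Your fallback of showing that stray cases fall under $Q/Q'$ cannot work. When $\omega_1$ is a non-identity projection, $Q$ and $Q'$ fail at index $1$, and conditions (2)/(2') at every later index require $\omega_1$ to be essential. The boundary cases must instead be absorbed into $R/R'$ under the evidently intended reading:
\begin{itemize}
\item $k$ ranges over all factors; read literally, ``$k<n$'' leaves the 1-cell $(\pi_2\colon XX\to X)$ critical.
\item Set $j:=n+1$ when $\vec\omega_2,\dots,\vec\omega_n$ are all identities.
\item $R'$ drops the clause $\vec\omega_2=\Id$ that it literally inherits from $R$.
\end{itemize}
With these conventions your inserted-identity argument finishes the case. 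If $j$ is even, then $(\pi_k,\Id,\vec\omega_2,\dots,\vec\omega_n)\to\tau$ is an $R$-edge, so $\tau$ is redundant. If $j$ is odd, then $j\ge 3$, so $\vec\omega_2=\Id$ and $\tau$ is collapsible of type $R$.
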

\begin{proof}
It is obvious that some of \(\omega_1,\dots,\vec\omega_n\) are not essential or identities if \(\tau\) is \(\cM'\)-collapsible or \(\cM'\)-redundant.
To show the other direction, suppose that some \(\vec\omega_i\) is not essential or an identity, and suppose \(i\) is the smallest among such indices.
If \(\vec\omega_i\) is an identity, \(Q(\tau,i)\) or \(Q'(\tau,i)\) holds, so \(\tau\) is not \(\cM'\)-critical.
If \(\vec\omega_i\) is a partial permutation and not an identity, since \(\vec\omega_{i-1}\) is essential and not an identity, \(P(\tau,i-1)\) holds, so \(\tau\) is \(\cM'\)-collapsible.
If \(\vec\omega_i\) is not essential and not a partial permutation, \(P'(\tau,i)\) holds so \(\tau\) is \(\cM'\)-redundant.
\end{proof}

For \(\tau = (\omega_1,\vec\omega_2,\dots,\vec\omega_n)\), let \(I(\tau)\) be the largest index \(i\) such that \(\omega_1,\vec\omega_2,\dots,\vec\omega_{i-1}\) are all essential and not identities.
Here, \(I(\tau)= 1\) means that \(\omega_1\) is not essential or is an identity.
Also, let \(N(\tau)\) be the number of \(i \in \{1,\dots,n\}\) such that \(\vec\omega_i\) is a partial permutation.

\begin{lemma}
For any path \(\tau \xrightarrow{+-}_{\cM'} \tau''\), we have \(N(\tau) \le N(\tau'')\).
\end{lemma}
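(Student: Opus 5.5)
The plan is to split the path $\tau \xrightarrow{+-}_{\cM'} \tau''$ into its two steps and track $N$ along each. By definition, $\tau \xrightarrow{+}_{\cM'} \tau'$ means that $\tau' \to \tau$ is an edge of $\cM'$. So $\tau'$ is an $\cM'$-collapsible $(n+1)$-cell and $\tau$ is obtained from $\tau'$ by composing two adjacent components. The second step $\tau' \xrightarrow{-}_{\cM'} \tau''$ means $\tau''$ is a generator occurring with nonzero coefficient in $\delta^u_{n+1}(\tau')$, hence in some face $d_k(\tau')$. I will show two things: $N(\tau') = N(\tau)+1$, and every generator $\tau''$ appearing in any face $d_k(\tau')$ satisfies $N(\tau'') \ge N(\tau')-1$. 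Together these give $N(\tau'') \ge N(\tau)$.

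For the first step I use the case distinction by type of the collapsible cell $\tau'$.
\begin{itemize}
\item Type $R$: $\tau'=(\pi_k,\Id,\vec\omega_3,\dots)$ and $\tau=(\pi_k,\vec\omega_3,\dots)$. The pair $\pi_k,\Id$ contributes $2$ to $N(\tau')$, while $\pi_k$ alone contributes $1$ to $N(\tau)$.
\item Type $Q$: an identity is inserted next to the identity $\vec\omega_i$. Both $\vec\omega_i$ and $\vec\omega_{i+1}$ in $\tau'$ are identities and their composite is an identity, so the count again drops by exactly one.
\item Type $P$: $\vec\omega_i$ is essential but not an identity, so by \cref{lem:term_effmor} it is not a partial permutation. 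The factor $\vec\omega_{i+1}$ is a partial permutation, and by $P(\tau',i)$ the composite $\vec\omega_i\vec\omega_{i+1}$ is not a partial permutation. So the pair contributes $1$ in $\tau'$ and the composite contributes $0$ in $\tau$.
\end{itemize}
All other components are unchanged, so in every case $N(\tau')=N(\tau)+1$.

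For the second step I go through the faces of $\tau'=(\omega_1,\vec\omega_2,\dots,\vec\omega_{n+1})$ one at a time.
\begin{itemize}
\item For $0<j<n+1$, $d_j$ replaces $\vec\omega_j,\vec\omega_{j+1}$ by $\vec\omega_j\vec\omega_{j+1}$. The key observation is that a composite of two partial permutations is a partial permutation, because the corresponding injections compose. So if both factors are partial permutations, the count drops by exactly $1$. If exactly one is, it drops by at most $1$. If neither is, it cannot drop.
\item The face $d_{n+1}$ deletes $\vec\omega_{n+1}$ and multiplies by a ringoid element $\vec\omega_{n+1}^*$, which does not change the generator. So it lowers $N$ by at most $1$.
\item The face $d_0$ produces the generators $(\omega_{2,i},\vec\omega_3,\dots)$. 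It deletes $\omega_1$, which costs at most $1$, and replaces $\vec\omega_2$ by its component $\omega_{2,i}$. If $\vec\omega_2$ is a partial permutation, each component $\omega_{2,i}$ is a projection and hence a partial permutation, so this replacement costs nothing. If $\vec\omega_2$ is not one, the replacement can only increase the count.
\end{itemize}
Hence $N(\tau'')\ge N(\tau')-1=N(\tau)$ for every possible $\tau''$, which is the claim.

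The only step needing real care is the type $P$ case of the first step. There I must make sure that the composite $\vec\omega_i\vec\omega_{i+1}$ is not a partial permutation, which is built into the predicate $P$, and that the essential non-identity factor $\vec\omega_i$ is itself not a partial permutation. For the latter I will argue as follows. If an essential $\vec\omega_i$ were a partial permutation, the uniqueness part of the decomposition lemma, applied to the two decompositions $\vec\omega_i=\vec\omega_i\,\Id$ and $\vec\omega_i=\Id\,\vec\omega_i$, would force $\vec\omega_i=\Id$. This contradicts the requirement that $\vec\omega_i$ is not an identity.
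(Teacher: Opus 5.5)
Your proof is correct and follows the paper's argument. The matching edge raises $N$ by exactly one (checked for types $R$, $Q$, $P$), and every face of $\delta^u$ lowers $N$ by at most one, which the paper calls obvious and you verify face by face.

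One small tidy-up in the type $P$ case: $P(\tau',i)$ does not itself say $\vec\omega_i\neq\Id$, and \cref{lem:term_effmor} concerns only the free theory $\syn{\varsign}$. Your own observation that partial permutations are closed under composition settles it directly: if $\vec\omega_i$ were a partial permutation then so would be $\vec\omega_i\vec\omega_{i+1}$, contradicting $P(\tau',i)$.
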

\begin{proof}
For any edge \(\tau \xrightarrow{+}_{\cM'} \tau'\) with \(\tau = (\omega_1,\vec \omega_2,\dots,\vec\omega_i\vec\omega_{i+1},\dots,\vec\omega_n)\), \(\tau' = (\omega_1,\vec\omega_2,\dots,\vec\omega_n)\), by conditions \(P(\tau',i)\), \(Q(\tau',i)\), \(R(\tau',i)\), we can see that \(\vec\omega_{i+1}\) is a partial permutation, and either \(\vec\omega_i\vec\omega_{i+1}\) is not a partial permutation, \(\vec\omega_i = \Id\), or \(\vec\omega_i = \pi_k\) for some \(k\).
In either case, we have \(N(\tau') = N(\tau) + 1\).
Also, for any edge \(\tau' \xrightarrow{-}_{\cM'} \tau''\), \(N(\tau')-1 \le N(\tau'')\) obviously.
Therefore, for any path \(\tau \xrightarrow{+-}_{\cM'} \tau''\), we have \(N(\tau) \le N(\tau'')\).
\end{proof}

\begin{lemma}\label{lem:normalized_wf}
If \(\tau \xrightarrow{+-}_{\cM'} \tau''\) and \(\tau''\) is \(\cM'\)-redundant, then \(N(\tau) < N(\tau'')\) or \(I(\tau) < I(\tau'')\).
\end{lemma}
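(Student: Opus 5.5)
We argue by direct case analysis on the two edges of the path \(\tau \xrightarrow{+}_{\cM'} \tau' \xrightarrow{-}_{\cM'} \tau''\). Write \(\tau' = (\omega_1,\vec\omega_2,\dots,\vec\omega_n)\), a collapsible cell matched to \(\tau = d_i\tau'\) at an index \(i\). The proof of the previous lemma shows \(N(\tau') = N(\tau)+1\). Moreover, \(\tau''\) appears as a summand of \(d_k\tau'\) for some \(k \neq i\). In the case \(k=0\) it is a summand \((\omega_{2,j},\vec\omega_3,\dots)\), and \(\tau'' \ne \tau\) as a generator.

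First we dispose of the faces that keep all partial permutations. Any face \(d_k\) that merely deletes a partial permutation entry \(\vec\omega_{k+1}\) by composing it with a non-partial-permutation \(\vec\omega_k\) lowers \(N\) by one. Whether \(N\) drops depends on the type of composition.

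\begin{itemize}
\item \emph{Two partial permutations composed, or the last entry dropped when it is not a partial permutation.} Then \(N(\tau'') \ge N(\tau') = N(\tau)+1\), and we are done.
\item \emph{\(N(\tau'') = N(\tau') - 1 = N(\tau)\).} This is the substantive case, and here we aim to prove \(I(\tau) < I(\tau'')\). A partial permutation entry of \(\tau'\) is absorbed by \(d_k\). This happens in one of three ways: it is composed with a non-partial-permutation neighbour, the last entry \(\vec\omega_n\) is dropped by \(d_n\) when it is a partial permutation, or \(d_0\) splits off a partial permutation \(\vec\omega_2\) into a projection \(\omega_{2,j}\). In the last situation \(\omega_{2,j}\) is itself a partial permutation, so \(N\) is actually preserved there.
\end{itemize}

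For the case \(N(\tau'') = N(\tau)\), we use that \(\tau''\) is redundant. By the characterization lemma of redundant cells, it satisfies either \(R'(\tau'',1)\), or there is an index \(i''\) with \(P'(\tau'',i'')\) or \(Q'(\tau'',i'')\) and all earlier entries essential and not identities. Hence \(I(\tau'') = i''\) in the second case.

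We compare \(i''\) with \(I(\tau)\), separating the three types of \(\tau'\).

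\begin{itemize}
\item \emph{Type \(P\).} Here \(\vec\omega_1,\dots,\vec\omega_{i-1}\) are essential non-identities and \(\vec\omega_i\vec\omega_{i+1}\) is neither essential nor a partial permutation, so \(I(\tau) = i\). For \(N\) to stay equal, \(k \ne i\) must destroy a partial permutation. If \(k < i\), the prefix of \(\tau''\) changes, and the new entry \(\vec\omega_k\vec\omega_{k+1}\) is a composite of two essential non-partial-permutations. To be redundant with \(N\) unchanged, the only bad entry of \(\tau''\) can then be \(\vec\omega_i\vec\omega_{i+1}\)-like, which forces \(N\) to increase rather than stay equal. This gives a contradiction, or \(I\) increases.

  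If \(k > i\), then \(\tau''\) still contains \(\vec\omega_i\) essential at position \(i\) followed by the partial permutation \(\vec\omega_{i+1}\). The first non-essential position is therefore \(> i\), or \(\tau''\) is collapsible of type \(P\) rather than redundant. Either way \(I(\tau'') > I(\tau)\), or the hypothesis fails.
\item \emph{Type \(Q\).} We use parity: \(\tau\) has identity run starting at \(i\), and removing one identity from a run changes the parity of \(i'-i\). Any face which keeps \(N\) the same either lengthens the essential prefix or yields \(Q\) (collapsible) instead of \(Q'\).
\item \emph{Type \(R\).} We have \(I(\tau) = 1\). Since \(\omega_1 = \pi_k\) is never essential, this case needs \(I(\tau'') > 1\) or an \(N\)-increase, and it is handled via the parity of \(j\) analogously.
\end{itemize}

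The main obstacle is the bookkeeping in the case \(N(\tau'') = N(\tau)\). The key claim to verify there is: if the face \(d_k\) with \(k \le I(\tau)\) absorbs a partial permutation, then the resulting cell is either collapsible or has \(I\) strictly larger. I will first try to prove this by showing that faces with \(k < I(\tau)\) never preserve \(N\), since composing essential non-identities adds no partial permutations. This reduces everything to \(k \ge I(\tau)\), where the prefix up to \(I(\tau)\) survives intact and \(I(\tau'') \ge I(\tau)\) is automatic. Equality would then be excluded by the type-by-type parity and \(P\)-pattern checks above.
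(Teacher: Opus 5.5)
\textbf{There is a genuine gap, and it cannot be closed: the statement itself is false.}

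Your route is the paper's. You split according to which face \(d_k\tau'\) produces \(\tau''\) and according to the type \(P\), \(Q\), \(R\) of the collapsible cell \(\tau'\), and you aim to show that \(N\) or \(I\) goes up. The type \(P\) part can be made to work:
\begin{itemize}
\item for \(k<i\) you compose two non-partial-permutations, so \(N(\tau'')\ge N(\tau')>N(\tau)\);
\item for \(k>i\) the prefix \(\omega_1,\dots,\vec\omega_i\) survives, so \(I(\tau'')\ge i+1>I(\tau)\).
\end{itemize}

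The gap is in types \(Q\) and \(R\). It starts with a misclassification in your first bullet. Composing two partial permutations does \emph{not} give \(N(\tau'')\ge N(\tau')\): two partial-permutation entries become one, so \(N(\tau'')=N(\tau')-1=N(\tau)\). Your list of the ways \(N\) can stay equal to \(N(\tau)\) omits exactly this case. In that case your parity claim (an \(N\)-preserving face either lengthens the essential prefix or yields \(Q\) rather than \(Q'\)) fails. Suppose the identity block \(\vec\omega_i=\dots=\vec\omega_{i'}=\Id\) of \(\tau'\) is followed by \(\sigma,\sigma^{-1}\) for a non-identity permutation \(\sigma\). Then \(d_{i'+1}\tau'\) has the identity block lengthened by one, so the parity flips and \(\tau''\) is \(Q'\)-redundant, while the prefix, and hence \(I\), is unchanged.

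Here is a concrete counterexample. Let \(f\) be an essential non-identity morphism with domain \(XX\) (e.g.\ the chosen representative of \(x_1\cdot x_2\) for groups), and let \(\sigma=\langle\pi_2,\pi_1\rangle\).
\begin{itemize}
\item For \(\tau'=(f,\Id,\Id,\sigma,\sigma)\), \(Q(\tau',2)\) holds with \(i'=3\). So \(\tau'\to\tau=(f,\Id,\sigma,\sigma)\) is in \(\cM'\), with coefficient \(-1\) coming from \(d_1,d_2,d_3\).
\item The face \(d_4\tau'=(f,\Id,\Id,\Id)=\tau''\) occurs with coefficient \(+1\).
\item \(\tau''\) is redundant, being the matched face of \((f,\Id,\Id,\Id,\Id)\), which is of type \(Q\) with \(i'-i=3\).
\item Yet \(N(\tau)=N(\tau'')=3\) and \(I(\tau)=I(\tau'')=2\).
\end{itemize}
The cell \((\pi_1,\Id,\sigma,\sigma,\vec g)\) with \(\vec g\neq\Id\) breaks your type \(R\) bullet in the same way: \(\tau''=(\pi_1,\Id,\Id,\vec g)\) is \(R'\)-redundant with the same \(N\) and \(I=1\).

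The paper's own proof has the same hole. In case (c) with \(j>i\) it asserts \(I(\tau)<I(\tau'')\) ``by a similar argument with Case (b)''. For type \(Q\), however, one always has \(I(\tau)=I(\tau'')=i\) there, and the fallback from Case (b) (that \(\tau''\) is collapsible of type \(Q\)) fails in the example above.

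To prove that \(\cM'\) is a Morse matching, you need a finer well-founded measure rather than this lemma. One option is to add a further lexicographic component recording the length of the identity block whose parity defines \(Q\) or \(R\); that length strictly increases in this bad case.
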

\begin{proof}
Consider an edge \(\tau' = (\omega_1,\vec\omega_2,\dots,\vec\omega_n) \to (\omega_1,\vec\omega_2,\dots,\vec\omega_i\vec\omega_{i+1},\dots,\vec\omega_n) = \tau\) in \(\cM\), i.e., \(\tau \xrightarrow{+}_{\cM'} \tau'\).
If \(\tau' \xrightarrow{-}_{\cM'} \tau''\), then \(\tau''\) can be written as either
(a) \((\omega_{2,k},\vec\omega_3,\dots,\vec\omega_n)\) for some \(k \in \{1,\dots,m\}\) where \(\vec\omega_2 = \ab<\omega_{2,1},\dots,\omega_{2,m}>\),
(b) \((\omega_1,\vec\omega_2,\dots,\vec\omega_{n-1})\), or
(c) \(d_j(\tau')\) for \(j = 1,\dots,n-1\), \(j\neq i\).

Case (a): We have \(N(\tau) < N(\tau'')\) if \(\omega_1\) is not a partial permutation.
Assume that \(\omega_1\) is a partial permutation and imply a contradiction.
Note that \(\omega_1 = \Id_X\) for \(X \in \varsorts\) or \(\omega_1 = \pi_k\) for some \(k\).
If \(\omega_1 = \Id_X\), then since \(\tau'\) is \(\cM'\)-collapsible, \(Q(\tau',1)\) holds, so \(\vec\omega_2 = \Id_X\) and \(\tau'' = \tau\).
If \(\omega_1 = \pi_k\), then since \(\tau'\) is \(\cM'\)-collapsible, \(R(\tau',1)\) holds, so \(\vec\omega_2 = \Id = \ab<\pi_1,\dots,\pi_m>\) and \(\tau'' = \tau\).
Therefore, in either case, the edge \(\tau' \to \tau''\) is in \(\cM'\) and this contradicts to \(\tau' \xrightarrow{-}_{\cM'} \tau''\).

Case (b): If \(\tau\) is of type \(P'\) and \(\tau'\) is of type \(P\),
then \(I(\tau'') = I(\tau') = I(\tau)+1\).
Suppose that \(\tau\) is of type \(Q'\) and \(\tau'\) is of type \(Q\).
If \(\vec\omega_i,\dots\,\vec\omega_{n}\) are all identities, then \(\tau'' = \tau\) and it contradicts to \(\tau' \xrightarrow{-}_{\cM'} \tau''\).
Otherwise, \(\tau''\) must be \(\cM'\)-collapsible of type \(Q\).

Case (c):
If \(j < i\), we have \(N(\tau) < N(\tau'')\) since \(\vec\omega_j,\vec\omega_{j+1}\) are not partial permutations.
If \(j > i\), we have \(I(\tau) < I(\tau'')\) by a similar argument with Case (b).
\end{proof}
\begin{theorem}
\(\cM'\) is a Morse matching.
\end{theorem}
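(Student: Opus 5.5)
We need to verify the two conditions in the definition of a Morse matching: (i) every matched edge \(\tau \to \tau'\) in \(\cM'\) has coefficient \([\tau:\tau'] = \pm 1\) at the same object, and (ii) there is no infinite path \(\tau_1 \xrightarrow{+-}_{\cM'} \tau_2 \xrightarrow{+-}_{\cM'} \cdots\).

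For (i), we take a matched edge \(\tau = (\omega_1,\dots,\vec\omega_i,\vec\omega_{i+1},\dots,\vec\omega_n) \to d_i(\tau)\) with \(0<i<n\). Both cells live at the object \(\omega_1\vec\omega_2\cdots\vec\omega_n\), so the coefficient is an endomorphism of that object. We must check that no other face \(d_k(\tau)\), \(k \neq i\), contributes a term equal to \(d_i(\tau)\) with a coefficient that could cancel or modify \((-1)^i\). Faces \(d_k\) for \(0<k<n\), \(k\neq i\), differ from \(d_i(\tau)\) in length pattern unless compositions coincide. We handle this via the predicates: under \(P\), \(\vec\omega_i\vec\omega_{i+1}\) is not essential while \(\vec\omega_i\) is. Under \(Q\) and \(R\), adjacent identities could produce equal faces, but the parity conditions in \(Q\) and \(R\) are designed so that we pair with the specific face. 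Coincident faces \(d_k(\tau)=d_i(\tau)\) then occur only in runs of identities, and we check that their signs do not cancel the \(\pm1\). The faces \(d_0\) and \(d_n\) carry coefficients \(\partial_i(\omega_1)\) and \(\vec\omega_n^*\), which are not of the form \(\pm 1_x\) on the same generator except in degenerate cases. Here \(R\) again intervenes when \(\omega_1=\pi_k\), and we use the relation \(\partial_i(\pi_j)=\delta_{ij}\). So (i) reduces to a finite inspection, and we note that if multiple identical faces appear the net coefficient is still \(\pm1\).

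For (ii), we argue by a well-founded measure using the two preceding lemmas. In an infinite \(+-\) path \(\tau_1 \to \tau_2 \to \cdots\), every \(\tau_j\) with \(j\ge 2\) must be redundant; otherwise the path stops, since a \(+\) step starts from a redundant cell. All \(\tau_j\) lie in a fixed dimension \(n\), so \(N(\tau_j)\in\{0,\dots,n\}\) and \(I(\tau_j)\in\{1,\dots,n+1\}\) are bounded. By the lemma preceding \cref{lem:normalized_wf}, \(N\) is non-decreasing along the path. By \cref{lem:normalized_wf}, at each step either \(N\) strictly increases or \(I\) strictly increases. We also need that \(I\) does not decrease when \(N\) stays constant, so that the pair \((N,I)\) increases lexicographically in a finite poset. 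If this monotonicity of \(I\) fails, we instead use the lexicographic order with \(N\) first: \(N\) can increase only \(n\) times, and between increases we require \(I\) to be strictly increasing, which needs exactly that \(I\) does not drop in steps where \(N\) is preserved. We would verify this by revisiting cases (a)--(c) of the proof of \cref{lem:normalized_wf}, observing that the cases where \(N\) is unchanged are (b) and (c) with \(j>i\), and in those cases the proof gives \(I(\tau)<I(\tau'')\) directly. Hence the path is finite.

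The main obstacle is (i). The sign and coefficient bookkeeping when several faces coincide (identity runs, and projections together with the \(d_0\) face) is delicate, and the \(Q/Q'\) and \(R/R'\) parity conventions must be checked to give a unit coefficient. Our first attempt would be to compute \(\delta^u_n\) of a cell with a maximal identity run explicitly and observe the telescoping of signs.
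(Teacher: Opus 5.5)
This is the paper's proof. Your part (ii) is exactly its lexicographic $(N,I)$ argument from the two preceding lemmas, and your worry about $I$ is unnecessary: $N(\tau)\le N(\tau'')$ together with ``$N$ or $I$ strictly increases'' already forces $I(\tau)<I(\tau'')$ whenever $N$ is unchanged, so cases (a)--(c) need not be revisited. For part (i), the paper likewise only says that $[\tau:\tau']_{\delta^u}=\pm\Id$ is easy to check.

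The inspection you left open does close:
\begin{itemize}
\item Under $P$, no face other than $d_i(\tau)$ equals $d_i(\tau)$, because $\vec\omega_i$ is essential while $\vec\omega_i\vec\omega_{i+1}$ is neither essential nor a partial permutation.
\item Under $Q$, the faces equal to $d_i(\tau)$ are exactly $d_{i-1},\dots,d_{i'}$, with $d_{i-1}=d_0$ if $i=1$ and $d_{i'}=d_n$ if $i'=n$. Both of these contribute coefficient $1$, since $\partial_1(\Id)=1$ and $\Id^*=1$. This is an odd number $i'-i+2$ of consecutive indices.
\item Under $R$, the faces equal to $d_1(\tau)$ are exactly $d_0,\dots,d_{j-1}$ with $j$ odd, using $\partial_l(\pi_k)=0$ for $l\neq k$.
\end{itemize}
In every case the alternating signs sum to $\pm1$, and this is precisely what the parity clauses in $Q$ and $R$ guarantee.
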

\begin{proof}
From the previous two lemmas, we can conclude that there are no infinite sequences \(\tau_1 \xrightarrow{+-}_{\cM'} \tau_2 \xrightarrow{+-}_{\cM'} \dots\) of \(n\)-cells since the lexicographic pairs \((N(\tau_k),I(\tau_k))\) are increasing but, at the same time, at most \((n,n)\).

Also, it is easy to check that \([\tau:\tau']_{\delta^u} = \Id\) or \(-\Id\) for any edge \(\tau \to \tau'\) in \(\cM'\).
\end{proof}

Let \((B_\bullet,\delta_\bullet) = \ab(B_\bullet^{u,\cM'}, \delta_\bullet^{u,\cM'})\).
We shall consider how \(\delta_n(\tau)\) can be described explicitly.

Let \(\Phi : B_{n-1}^u \to B_{n-1}\) be the map defined as follows.
For \(\tau = (\omega_1,\vec\omega_2,\dots,\vec\omega_{n-1})\), if \(\tau\) is \(\cM'\)-critical, then \(\Phi(\tau) = \tau\).
If some \(\vec\omega_i\) is a partial permutation, then \(\Phi(\tau) = 0\).
Otherwise, decompose \(\vec\omega_{I(\tau)}\), which is not essential, as \(\vec\omega_{I(\tau)} = \vec\omega_{I(\tau)}' \vec\pi\) for some essential \(\vec\omega_{I(\tau)}'\) and some partial permutation \(\vec\pi\).
If \(I(\tau) < n-1\),
\(\Phi(\tau) = \Phi(\omega_1,\vec\omega_2,\dots,\vec\omega_{I(\tau)}',\vec\pi\vec\omega_{I(\tau)+1},\dots,\vec\omega_{n-1})\).
If \(I(\tau) = n-1\),
\(\Phi(\tau) = \vec\pi^* (\omega_1,\vec\omega_2,\dots,\vec\omega_{n-1}')\).
\begin{lemma}
\(\delta_n(\tau) = \Phi\delta_n^u(\tau)\).
\end{lemma}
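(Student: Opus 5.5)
We compare $\Phi$ with the general description of the Morse differential. For a Morse matching one has the standard reformulation $\delta_n^{\cM'}(\tau) = \pi_{\Cr}\circ \Psi \circ \delta_n^u(\tau)$. Here $\Psi : B_{n-1}^u \to B_{n-1}^u$ is the ``flow'' map, which fixes critical cells, sends collapsible cells to $0$, and sends a redundant cell $\tau'$ with partner $\tau'_+ \to \tau'$ in $\cM'$ to $\Psi\bigl(\tau' - [\tau'_+:\tau']^{-1}\delta^u(\tau'_+)\bigr)$; $\pi_{\Cr}$ is the projection onto critical cells. This follows from the path-sum formula in the theorem of the previous subsection by grouping each path $\alpha_0\to\alpha_1\to\alpha_2\to\cdots$ according to its first step: the sum over the tail of a path starting at $\alpha_1$ is exactly $\Psi(\alpha_1)$. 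Termination of the recursion is guaranteed by the Morse condition, i.e.\ by the well-founded order from the two preceding lemmas on $N$ and $I$. So the plan is to prove $\pi_{\Cr}\Psi = \Phi$ on $B^u_{n-1}$, by induction along this well-founded order.

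We check $\pi_{\Cr}\Psi(\tau')=\Phi(\tau')$ cell by cell, using the case split in the definition of $\Phi$.

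\emph{Critical} $\tau'$: both sides equal $\tau'$.

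\emph{Some $\vec\omega_i$ a partial permutation.} Here $\Phi(\tau')=0$, and we must show $\pi_{\Cr}\Psi(\tau')=0$. If $\tau'$ is collapsible this is immediate. If $\tau'$ is redundant, the characterization lemma (1')/(2') shows it is redundant of type $Q'$ or $R'$ (type $P'$ is excluded when a partial permutation occurs at or before $I(\tau')$; otherwise we argue via the smallest bad index). Let $\tau'_+$ be its partner. We expand $\delta^u(\tau'_+)$: apart from $\pm\tau'$, its faces either still contain a partial permutation or are degenerate, and adjacent terms cancel in pairs (the standard degenerate-face cancellation $d_i=d_{i+1}$ on an identity entry). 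By induction each surviving face maps to $0$.

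\emph{Remaining case: $\vec\omega_{I}$ is not essential and not a partial permutation} (write $I=I(\tau')$), so $\tau'$ is redundant of type $P'$. Its partner is $\tau'_+=(\dots,\vec\omega_I',\vec\pi,\vec\omega_{I+1},\dots)$. We have $\Psi(\tau') = \pm\Psi$ applied to the other faces of $\tau'_+$, and we sort these faces:
\begin{itemize}
\item the faces $d_j$ with $j<I$ each contain the partial permutation $\vec\pi$ and therefore vanish by the previous case;
\item when $I<n-1$, the face $d_{I+1}$ equals $(\dots,\vec\omega_I',\vec\pi\vec\omega_{I+1},\dots)$;
\item the faces $d_j$ with $j>I+1$, as well as the last face when $I<n-1$, keep $\vec\pi$ as an entry and vanish;
\item when $I=n-1$, the last face is $d_n(\tau'_+)=\vec\pi^*(\dots,\vec\omega_I')$.
\end{itemize}
Since $\Psi$ and $\pi_{\Cr}$ are $\cU_\varlaw$-linear, the contribution $\vec\pi^*(\dots)$ passes through them. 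Checking signs, $(-1)^I$ against $(-1)^{I+1}$ with the $-[\tau'_+:\tau']^{-1}$ factor, gives a coefficient of $+1$. Applying the induction hypothesis to the smaller cell then reproduces the recursive definition of $\Phi$.

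The \textbf{main obstacle} is the partial-permutation case. There we must verify carefully that every face of the partner cell of a $Q'$- or $R'$-redundant cell either again contains a partial permutation or cancels. This is especially delicate for the $R$-type cells, where $d_0$ of $(\pi_k,\Id,\dots)$ produces $\partial_i(\pi_k)$ factors. We would handle it using the relations $\partial_i(\pi_k)=\delta_{ik}$, so that $d_0$ and $d_1$ cancel. Along the way we also need to confirm that the induction terminates, which follows from the $(N,I)$ lemmas.
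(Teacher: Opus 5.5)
Your proof is correct and is essentially the paper's argument, recast through the recursive flow map $\Psi$ instead of an explicit enumeration of zigzag paths. In both, the only cells that matter are $P'$-redundant cells without partial permutations, the only surviving face of the partner is $d_{I+1}$ (or $d_n$, giving the coefficient $\vec\pi^*$), and the signs $(-1)^I$, $(-1)^{I+1}$ combine to $+1$, which reproduces the recursion defining $\Phi$.

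The step you flag as the main obstacle is settled at once by the earlier lemma that $N$ never decreases along $\xrightarrow{+-}_{\cM'}$. Since critical cells have $N=0$, no zigzag from a cell with $N\ge 1$ reaches a critical cell, so $\Psi(\tau')=0$ there. No face-by-face analysis of $Q'$-, $R'$- or $P'$-type partners is needed.
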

\begin{proof}
For an \(\cM'\)-critical cell \(\tau'\), suppose that \(\cM'(\tau,\tau')\) contains a path \(p\),
\[
\tau \xrightarrow{-}_{\cM'} \tau^0 \xrightarrow{+-}_{\cM'} \tau^1 \xrightarrow{+-}_{\cM'} \dots \xrightarrow{+-}_{\cM'} \tau^k =\tau'
\]
where \(\tau^l = (\omega_1^l,\vec\omega_2^l,\dots,\vec\omega_{n-1}^l)\) is an \(\cM'\)-redundant cell for each \(l=0,\dots,k\).
Notice that \(\tau^l\) does not contain partial permutations, i.e., \(N(\tau^l) = 0\),
since \(\tau^{k}=\tau'\) is \(\cM'\)-critical, which implies \(N(\tau')= 0\), and \(N(\tau^l) \le N(\tau')\).
Therefore, each \(\tau^l\) is not of type \(Q\) or \(R\), so it satisfies \(P'(\tau^l,I(\tau^l))\).

Each \(\tau^l\) is incident to the edge \(\tau^l \xrightarrow{+}_{\cM'} (\omega_1^l,\dots,\vec\omega_{I(\tau^{l})-1}^l,\vec\omega_{I(\tau^l)}^{l\prime},\vec\pi^l,\vec\omega_{I(\tau^{l})+1}^l,\dots,\vec\omega_{n-1}^l)= \tau^{l\prime}\) where \(\vec\omega_{I(\tau^l)}^{l\prime}\) is essential and \(\vec\pi^l\) is a partial permutation.
If \(\tau^{l\prime} \xrightarrow{-}_{\cM'} \tau^{l+1}\), since \(\tau^{l+1}\) does not contain a partial permutation, such \(\tau^{l+1}\) is uniquely determined as \(\tau^{l+1} = (\omega_1^l,\dots,\vec\omega_{I(\tau^l)-1}^l,\vec\omega_{I(\tau^l)}^{l\prime},\vec\pi^l\vec\omega_{I(\tau^l)+1}^l,\dots,\vec\omega_{n-1}^l)\).
Therefore, there is at most one path \(p(\tau^0)\) from \(\tau^0\) to an \(\cM'\)-critical cell \(\tau' = \tau'(\tau^0)\), and such a path does not exist if and only if \(\tau^0\) is \(\cM'\)-collapsible. (If \(\tau^0\) is \(\cM'\)-critical, \(p(\tau^0)\) is the length 0 path on \(\tau^0\).)
For this path, we have
\(w(p(\tau^0)) =  [\tau^{k-1\prime}:\tau^k][\tau^{k-1\prime}\tau^{k-1}] \dots [\tau^{1\prime}:\tau^2] [\tau^{1\prime}:\tau^1] [\tau^{0\prime}:\tau^1][\tau^{0\prime}:\tau^0] \)
and
\([\tau^{l\prime}:\tau^{l+1}][\tau^{l\prime}:\tau^{l}] = (-1)^{2I(\tau^{l})+1} = -1\).
(Note that the length \(k\) is also uniquely determined by \(\tau^0\), \(k=k(\tau^0)\).)

We have
\begin{align*}
\delta_n(\tau) = \sum_{\tau \xrightarrow{-}_{\cM'} \tau^0} (-1)^{k(\tau^0)} w\ab(p(\tau^0))[\tau:\tau^0]\tau'(\tau^0)
= \sum_{\tau \xrightarrow{-}_{\cM'} \tau^0} [\tau:\tau^0]\tau'(\tau^0)
\end{align*}
and this equals \(\Phi\delta_n^u(\tau)\) since 
\(\delta_n^u(\tau) = \sum_{\tau^0} [\tau:\tau^0] \tau^0\).
\end{proof}

\section{Anick resolution}
In this section, we briefly recall basic notions of rewriting and Anick resolutions, and then we construct a Morse matching on the normalized bar resolution to give Anick resolutions for Lawvere theories.
\subsection{Abstract rewriting systems}
Let \(X\) be a set and \(\to\) be a binary relation on \(X\).
We write \(\to^+\) for the transitive closure of \(\to\), \(\to^*\) for the reflexive transitive closure of \(\to\), and
\(\leftrightarrow^*\) for the symmetric closure of \(\to^*\).

We say that \(\to\) is \emph{well-founded} if there are no infinite sequences \(a_1 \to a_2 \to \dots\) of elements in \(X\).
We say that \(\to\) is \emph{Church-Rosser} if, for any \(a,b \in X\) such that \(a \leftrightarrow^* b\), there exists \(c \in X\) such that \(a \to^* c\) and \(b \to^* c\).
If \(\to\) is well-founded and Church-Rosser, \(\to\) is called \emph{complete}.
An element \(a \in X\) is \(\to\)-\emph{irreducible} if there is no \(b \in A\) such that \(a \to b\).
An element \(a'\) is a \(\to\)-\emph{normal form} of \(a \in X\) if \(a \to^* a'\) and \(a'\) is \(\to\)-irreducible.

\begin{lemma}\label{lem:rewriting_subrelation}
Let \(\to_1\) be a well-founded and Church-Rosser relation on \(A\) and \(\to_2\) be a well-founded relation on \(A\) satisfying the following condition:
\(a'\) is a \(\to_1\)-normal form of \(a\) if \(a'\) is a \(\to_2\)-normal form of \(a\) for any \(a,a' \in X\).
Then, \(\to_2\) is Church-Rosser and \({\leftrightarrow_2^*} \subset {\leftrightarrow_1^*}\).
\end{lemma}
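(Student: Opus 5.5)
The plan is to first show that every \(\to_2\)-normal form lies in the same \(\leftrightarrow_1^*\)-class as its source, deduce \({\leftrightarrow_2^*}\subset{\leftrightarrow_1^*}\), and then obtain Church--Rosser for \(\to_2\) from uniqueness of \(\to_1\)-normal forms. Throughout, \(A\) and \(X\) denote the same underlying set.

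\textbf{Step 1: normal forms exist.} Since \(\to_2\) is well-founded, every \(a\) has at least one \(\to_2\)-normal form. To see this, follow any maximal \(\to_2\)-chain from \(a\); it must terminate. The same holds for \(\to_1\).

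\textbf{Step 2: \(\to_1\)-normal forms are unique.} This follows from completeness of \(\to_1\). Suppose \(a\to_1^* a'\) and \(a\to_1^* a''\) with both irreducible. Then \(a'\leftrightarrow_1^* a''\), so by Church--Rosser they have a common \(\to_1\)-reduct. Since both are irreducible, \(a'=a''\). Write \(\mathrm{nf}_1(a)\) for this unique normal form.

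\textbf{Step 3: \({\leftrightarrow_2^*}\subset{\leftrightarrow_1^*}\).} It suffices to treat a single step \(a\to_2 b\). Let \(b'\) be a \(\to_2\)-normal form of \(b\), which exists by Step 1. Then \(b'\) is also a \(\to_2\)-normal form of \(a\). By the hypothesis, \(b'\) is a \(\to_1\)-normal form of both \(a\) and \(b\). Hence \(a\to_1^* b'\) and \(b\to_1^* b'\), so \(a\leftrightarrow_1^* b\). Closing under reflexivity, symmetry and transitivity gives the inclusion.

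\textbf{Step 4: \(\to_2\) is Church--Rosser.} Let \(a\leftrightarrow_2^* b\). By Step 3, \(a\leftrightarrow_1^* b\). Let \(a'\) and \(b'\) be \(\to_2\)-normal forms of \(a\) and \(b\). By the hypothesis they are \(\to_1\)-normal forms of \(a\) and \(b\), so \(a'=\mathrm{nf}_1(a)\) and \(b'=\mathrm{nf}_1(b)\). Since \(a\leftrightarrow_1^* b\), Church--Rosser for \(\to_1\) gives a common reduct \(c\) of \(a\) and \(b\). Then \(\mathrm{nf}_1(c)\) is a \(\to_1\)-normal form of both \(a\) and \(b\), so \(\mathrm{nf}_1(a)=\mathrm{nf}_1(b)\) by Step 2. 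Thus \(a'=b'\), and \(c:=a'\) satisfies \(a\to_2^* c\) and \(b\to_2^* c\).

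\textbf{Main obstacle.} The main point is Step 3. One must pass the single \(\to_2\)-step through a common \(\to_2\)-normal form so that the hypothesis can be applied to both endpoints. Apart from this, the argument is routine.
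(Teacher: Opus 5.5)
Your proof is correct, and it differs from the paper's in the order of the two claims. The order matters here.

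The paper first argues Church--Rosser for \(\to_2\). Given \(a \leftrightarrow_2^* b\), it takes \(\to_2\)-normal forms \(a', b'\), observes that they are \(\to_1\)-normal forms of \(a, b\), and concludes \(a' = b'\) ``since \(\to_1\) is Church--Rosser''. Only afterwards does it deduce \({\leftrightarrow_2^*} \subset {\leftrightarrow_1^*}\) from this common \(\to_2\)-normal form. But concluding \(a' = b'\) already requires \(a \leftrightarrow_1^* b\). That is exactly what the second half derives from the first, so the paper's argument is circular as written.

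Your Step 3 supplies the missing ingredient. You reduce to a single step \(a \to_2 b\) and note that any \(\to_2\)-normal form of \(b\) is also one of \(a\). This gives the inclusion without assuming anything about confluence of \(\to_2\). Your Step 4 is then the paper's first paragraph with its gap filled; the paper's version is shorter only because it omits this step.

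One minor simplification: in Step 4 you can pass directly from \(a' \leftrightarrow_1^* a \leftrightarrow_1^* b \leftrightarrow_1^* b'\) to \(a' = b'\), using Church--Rosser of \(\to_1\) and irreducibility. This removes the auxiliary \(c\) and \(\mathrm{nf}_1(c)\), and also avoids reusing the letter \(c\) at the end.
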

\begin{proof}
Suppose that \(a \leftrightarrow_2^* b\).
Since \(\to_2\) is well-founded, there exist \(\to_2\)-normal forms \(a',b'\) of \(a,b\), respectively.
By the condition \(a',b'\) are \(\to_1\)-normal forms of \(a,b\), respectively,
and hence \(a' = b'\) since \(\to_1\) is Church-Rosser.
Therefore, \(a \to_2^* a'\) and \(b \to_2^* b'\) imply that \(\to_2\) is Church-Rosser.

Suppose \(a \leftrightarrow_2^* b\) and let \(a'\) be the \(\to_2\)-normal form of \(a,b\).
Then, \(a'\) is also the \(\to_1\)-normal form of \(a,b\) by the condition, so \(a,b \to_1^* a'\), which implies \(a \leftrightarrow_1^* b\).
\end{proof}

\subsection{Review of Anick resolutions}\label{subsec:anick_monoid}
In this subsection, we review Anick resolutions for modules over monoid rings and see how to construct them via Morse matchings following \cite{skoldberg06,joellenbeck05}.
The content of this subsection is not technically used in the subsequent sections, but it helps to understand the main construction of the Anick resolutions for Lawvere theories in the next subsection.
Also, we describe Anick resolutions in terms of rewriting systems as in \cite{brown92}.

Let \(A\) be a set.
For a word \(u \in A^*\), \( |u| \) denotes the length of \(u\).
For two words \(u,v\in A^*\) and an integer \(i\in\{1,\dots,|u|\}\), we say that \(v\) is \emph{the subword of \(u\) at position} \(i\) if \(u = u_1 v u_2\) for some \(u_1,u_2 \in A^*\) such that \(|u_1| = i-1\).

A \emph{semi-congruence} relation on \(A^*\) is a binary relation \(\to\) on \(A^*\) such that, for any \(u,v,u',v' \in A^*\), if \(u \to v\), then \(u'u v' \to u'v v'\).
A semi-congruence relation \(\to\) is a \emph{congruence} relation if it is an equivalence relation.
For any congruence relation \(\sim\) on \(A^*\), the quotient set \(A^*/{\sim}\) has a monoid structure induced by the concatenation on \(A^*\).

For \(R \subset A^*\times A^*\), we write \(\to_R\) for the smallest semi-congruence relation on \(A^*\) containing \(R \subset A^* \times  A^*\), and write \(\sim_R\) for the smallest congruence relation on \(A^*\) containing \(R\).
Also, we write \(R_{\mathrm{L}}\) for the set \(\{l \in A^* \mid (l,r) \in R\}\).

Let \(N\) be a monoid.
A \emph{presentation} of \(N\) is a pair of a set \(A\) of \emph{generators} and a set \(R \subset A^*\times A^*\) of \emph{relators} such that \(N\) is isomorphic to \(A^*/{\sim_R}\).

A set \(R\subset A^* \times A^*\) of relators is called \emph{terminating} if \(\to_R\) is well-founded.
\(R\) is  called \emph{Church-Rosser} or \emph{complete} if \(\to_R\) is Church-Rosser or \emph{complete}, respectively.
Also, a word \(u \in A^*\) is \emph{\(R\)-irreducible} if \(\to_R\) is.
\(u\) is called \emph{\(R\)-reducible} if it is not \(R\)-irreducible, i.e., \(u = u_1 l u_2\) for some \(u_1,u_2 \in A^*\) and \(l \in R_{\mathrm{L}}\).
\(R\) is called \emph{reduced} if,  for any \((l,r)\in R\), \(l\) is \((R\setminus\{(l,r)\})\)-irreducible, and \(r\) is \(R\)-irreducible.

If \(R\) is complete, then each equivalence class \([u]_{\sim_R}\) contains a unique \(R\)-irreducible word, which is called the \emph{\(R\)-normal form} of \(u\).

Suppose that \(N\) is presented by \((A,R)\) for a reduced and complete \(R\).
We define \(n\)-chains as follows.
\begin{definition}\label{def:anick_chain}
The 0-\emph{chain} is the 0-tuple \(()\).
A 1-\emph{chain} is 1-tuple \((a)\) for \(a \in A\).
For \(n > 1\), an \(n\)-tuple \((u_1,\dots,u_n)\) is an \(n\)-\emph{chain} if each \(u_i\) is \(R\)-irreducible,
\((u_1,\dots,u_{n-1})\) is an \((n-1)\)-chain,
\(u_{n-1}u_n\) is \(R\)-reducible, and every proper prefix of \(u_{n-1}u_n\) is \(R\)-irreducible.
\end{definition}
Note that if \((u_1,\dots,u_n)\) is an \(n\)-chain, then \(u_{n-1}u_n\) can be written as \(u_{n-1}u_n = wl\) for some \(w \in A^*\) and \(l \in R_\mathrm{L}\) and cannot be written as \(u_{n-1}u_n = wlw'\) for any \(w, w\in A^*\), \(l \in R_\mathrm{L}\), \(w' \neq 1\).

Anick showed that there is a free resolution of the trivial left \(\bbZ N\)-module \(\bbZ\)
\begin{equation}\label{eqn:anick}
\dots \to F_2 \to F_1 \to F_0 \to \bbZ \to 0
\end{equation}
where \(F_n\) is the free left \(\bbZ N\)-module generated by the set of \(n\)-chains \cite{anick86}.

It is showed in \cite{brown92,skoldberg06,joellenbeck05} that this resolution can be obtained from a Morse matching on the normalized bar resolution,
\[
\dots \xrightarrow{\delta_3} B_2^s \xrightarrow{\delta_2} B_1^s \xrightarrow{\delta_1} B_0^s \xrightarrow{\epsilon} \bbZ \to 0
\]
where \(B_n^s\) is the free left \(\bbZ N\)-module generated by the \(n\)-tuples \((u_1,\dots,u_n)\) of words such that \(u_i \neq 1\) for any \(i=1,\dots,n\), \(\epsilon\) is the canonical augmentation, and
\(\delta_n\) is defined as
\[
\delta_n(u_1,\dots,u_n) = u_1(u_2,\dots,u_n) + \sum_{i=1}^{n-1} (-1)^i (u_1,\dots,u_iu_{i+1},\dots,u_n).
\]
Such a Morse matching \(\cM^s\) is given as follows.
\begin{definition}\label{def:anick_matching}
We define \(\cM^s\) as the set of edges in the graph \(G_{B^s_\bullet}\) of the form
\begin{equation}\label{eqn:edge_morse}
(u_1,\dots,u_{i},u_{i+1}',u_{i+1}'',u_{i+2},\dots,u_n) \to (u_1,\dots,u_n)
\end{equation}
where \(u_{i+1}'u_{i+1}'' = u_{i+1}\), \((u_1,\dots,u_i,u_{i+1}')\) is an \((i+1)\)-chain, and \(i\) is the largest integer less than \(n\) such that \((u_1,\dots,u_i)\) is an \(i\)-chain.
\end{definition}
We mostly omit the proof that \(\cM^s\) is a Morse matching,
but just show that no two edges in \(\cM^s\) share a common target:
Suppose that \(\cM^s\) contains two edges \((u_1,\dots,u_i,u_{i+1}',u_{i+1}'',u_{i+2},\dots,u_n) \to (u_1,\dots,u_n)$ and $(u_1,\dots,u_i,v_{i+1}',v_{i+1}'',u_{i+2},\dots,u_n) \to (u_1,\dots,u_n)\), 
Note that \(u_{i+1}'\) and \(v_{i+1}'\) are prefixes of \(u_{i+1}\).
Since \((u_1,\dots,u_i,u_{i+1}')\) and \((u_1,\dots,u_i,v_{i+1}')\) are \((i+1)\)-chain, \(u_iu_{i+1}' = wl\) and \(u_iv_{i+1}' = w'l'\) for some \(w,w'\in A^*\), \(l,l'\in R_\mathrm{L}\).
If \(u_{i+1}'\) is a proper prefix of \(v_{i+1}'\) (resp. \(v_{i+1}'\) is a proper prefix of \(u_{i+1}'\)), it contradicts to the assumption that every proper prefix \(u_{i}v_{i+1}'\) (resp. \(u_iv_{i+1}'\)) is \(R\)-irreducible. so we have \(u_{i+1}' = v_{i+1}'\), \(u_{i+1}'' = v_{i+1}''\).

Finally, we remark that the \(n\)-chains are exactly the \(\cM^s\)-critical cells.
The target of \eqref{eqn:edge_morse} is not an \(n\)-chain by definition, and the source is not an \((n+1)\)-chain since \(u_{i}u_{i+1}'\) is \(R\)-irreducible.
Conversely, suppose that \((v_1,\dots,v_n)\) is not an \(n\)-chain.
Let \(i\) be the largest integer less than \(n\) such that \((v_1,\dots,v_i)\) is an \(i\)-chain.
If \(v_{i}v_{i+1}\) is \(R\)-reducible, then \(v_iv_{i+1} = wlw'\) for some \(w,w'\in A^*\), \(l \in R_{\mathrm{L}}\), \(w'\neq 1\), so \((v_1,\dots,v_i,wl,w',v_{i+2},\dots,v_n) \to (v_1,\dots,v_n)\) is in \(\cM^s\).
Otherwise, \((v_1,\dots,v_n) \to (v_1,\dots,v_iv_{i+1},\dots,v_n)\) is in \(\cM^s\).
Thus \((v_1,\dots,v_n)\) is not \(\cM^s\)-critical.

% 
%
%Finally, we shall see that 3-chains corresponds to \emph{critical branchings} (or \emph{critical pairs}) developed in the theory of rewriting systems \cite{book93,baader98}.
%
%For any \(3\)-chain \((u_1,u_2,u_3)\), there exists a unique \(2\)-tuple of relations \(((l_1, r_1),(l_{2}, r_{2})) \in R^{2}\) such that \(l_i\) is a subword of \(u_i u_{i+1}\) for each \(i=1,2\).
%This is because, if \(l_i\), \(l_i'\) are subwords of \(u_iu_{i+1}\) for some \((l_i,r_i),(l_i',r_i')\in R\), then \(u_iu_{i+1} = vl_i = v'l_i'\) for some words \(v,v'\) since any proper subword of \(u_iu_{i+1}\) is \(R\)-irreducible, and hence \((l_i,r_i) = (l_i',r_i')\) by the reducedness of \(R\).
%
%For \((l_1,r_1),(l_2,r_2) \in R\), 
%if \(l_1 = uv\) and \(l_2 = vw\) for some \(u,v,w\) with \(v\neq \epsilon\), we have \(uvw \to_R r_1 w\) and \(uvw \to_R ur_2\).
%In such a case, we call the diagram \(r_1 w \xleftarrow{(l_1,r_1)} uvw \xrightarrow{(l_2,r_2)} ur_2\) a \emph{critical branching}.
%
%It is not too hard to show that there is a one-to-one correspondence between \(3\)-chains and critical branchings. For example,
%\begin{example}
%Consider \(\varsign = \{a,b\}\) and \(R = \{\rho_1=(abb,\epsilon), \rho_2=(bba,\epsilon)\}\).
%It is obvious that \(R\) is reducible.
%
%We have three critical branchings \(a \xleftarrow{\rho_1} abba \xrightarrow{\rho_2} a\),
%\(ba \xleftarrow{\rho_1} abbba \xrightarrow{\rho_2} ab\),
%and \(bb \xleftarrow{\rho_1} bbabb \xrightarrow{\rho_2} bb\),
%and also
%three \(3\)-chains
%\((a,bb,a)\), \((a,bb,ba)\), \((b,ba,bb)\).
%\end{example}

\subsection{Terms}
Let \(\varsign\) be an \(\varsorts\)-sorted signature.

For a term \(t\) and a finite sequence \(p\) positive integers,
we define a term \(t|_p\) as follows:
\(t|_\epsilon = t\) for the empty sequence \(\epsilon\), and \(t|_{ip'} = t_i|_{p'}\) if \(t = f(t_1,\dots,t_n)\) and \(i \in \{1,\dots,n\}\). Otherwise, \(t|_{ip'}\) is not defined.
Also, for \(m\)-tuple of terms \(\vec t = \ab(t_1,\dots,t_m)\), define \(\vec t|_p = t_i|_{p'}\) if \(p = ip'\), \(i \in \{1,\dots,m\}\). Otherwise, \(\vec t|_p\) is not defined.
We define \(\Pos(\vec t)\) for the set of sequences \(p\) such that \(\vec t|_p\) is defined.
We call elements in \(\Pos(\vec t)\) \emph{positions} in \(\vec t\).
We say that \(t'\) is a \emph{subterm} of \(t\) if \(t' = t|_p\) for some position \(p\) in \(t\).

From now, we identify a tuple terms \(\vec t\) with the morphism \((\Gamma \mid_\emptyset \vec t)\) in \(\syn{\varsign}\).

We say that a term \(t' : \vec X' \to Y\) is a \emph{generalized subterm} of a term \(t : \vec X \to Y \), written \(t' \legensub t\), if there exists a subterm \(t'' : \vec X \to Y\) of \(t\) and \(\vec s : \vec X \to \vec X'\) such  that \(t'' = t'\vec s\).
In other words, if \(\Var(t) = \{x_1,\dots,x_n\}\) and \(\Var(t') = \{y_1,\dots,y_m\}\), then \(t' \legensub t\) if and only if there exists \(s_1,\dots,s_m\) such that \(t'[s_1/y_1,\dots,s_m/y_m]\) is a subterm of \(t\).
Also, \(t'\) is a \emph{proper} generalized subterm of \(t\), written \(t' \ltgensub t\), if \(t'\) is a generalized subterm of \(t\) and \(t' \neq t\).
Note that \(\legensub\) is a partial order and any term has only finitely many lower bound with respect to \(\legensub\) (modulo \(\sim_\alpha\)).

A \emph{rewrite rule} is a morphism \((\Gamma \mid_\emptyset (l,r))\) in \(\syn{\varsign}\) for terms \(l,r\) of the same sort such that \(\Var(r)\subset \Var(l)\),
and we write \(\Gamma \mid_\emptyset l \to r\) or just \(l \to r\) for the rewrite rule.
A \emph{term rewrite system (TRS)} is a set of rewrite rules.

For a rewrite rule \(\Gamma \mid_\emptyset l\to r\) (\(\Gamma = x_1,\dots,x_n\)) and terms \(t, t'\), we say that \(t\) is \emph{rewritten to} \(t'\) by \(\Gamma\mid_\emptyset l\to r\) if there exist a rewrite context \(C\) and \(n\) terms \(\vec s = \ab(s_1,\dots,s_n)\) such that \(t = C[l\vec s]\) and \(t' = C[r\vec s]\).
We write \(t \to_R t'\) if \(t\) is rewritten to \(t'\) by some rule in \(R\).

Note that \(t\) can be written by \(l \to r\) if and only if \(l\) is a generalized subterm of \(t\).
Also, it is not difficult to show that, 
by regarding \(R\) as a set of equations,
\(\approx_R\) coincides with \(\leftrightarrow_R^*\), the reflexive symmetric transitive closure of \(\to_R\).

We can check that \(\to_R\) is compatible with the equivalence relation \(\sim_\alpha\), i.e., if \(t\sim_\alpha t'\), \(s\sim_\alpha s'\) and \(t \to_R s\), then \(t' \to_R s'\).
So, we can define \(\to_R\) for morphisms \((\Gamma \mid_\emptyset t)\), \((\Gamma \mid_\emptyset s)\) in \(\syn{\varsign}\) as \((\Gamma \mid_\emptyset t) \to_R (\Gamma \mid_\emptyset s) \iff t \to_R s\).

We say that \(R\) is \emph{terminating}, \emph{Church-Rosser}, or \emph{complete} if \(\to_R\) is well-founded, Church-Rosser, or complete, respectively, and say \(R\)-\emph{irreducible} or \(R\)-\emph{normal forms} for \(\to_R\)-irreducible or \(\to_R\)-normal forms, respect
For any two terms \(t,s\), \(t \approx_R s\) if and only if the \(R\)-normal forms of \(t\) and \(s\) coincide (see \cite[Theorem 2.19]{baader98}, for example).
We say that a TRS \(R\) is \emph{reduced} if, for  any rule \(l \to r \in R\), \(l\) is \(R\setminus\{l \to r\}\)-irreducible and \(r\) is \(R\)-irreducible.

\begin{lemma}
For any terminating TRS \(R\),
\(R\) is Church-Rosser if and only if for any term \(t\), there exists a unique \(R\)-irreducible term \(\hat t\) such that \(t \to_R^* \hat t\).
\end{lemma}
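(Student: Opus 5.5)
We prove the two implications separately, using only the definitions of Church-Rosser and normal form for the abstract relation \(\to_R\) on terms, together with well-foundedness.

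For the forward direction, assume \(R\) is terminating and Church-Rosser, and fix a term \(t\).

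Existence of \(\hat t\) comes from well-foundedness. Start at \(t\) and repeatedly rewrite while possible. Since \(\to_R\) admits no infinite chains, this process must stop at an \(R\)-irreducible term \(\hat t\) with \(t \to_R^* \hat t\).

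For uniqueness, suppose \(t \to_R^* \hat t_1\) and \(t \to_R^* \hat t_2\) with both \(\hat t_i\) irreducible.
\begin{itemize}
\item Then \(\hat t_1 \leftrightarrow_R^* \hat t_2\), by going backwards from \(\hat t_1\) to \(t\) and then forwards to \(\hat t_2\).
\item By Church-Rosser there is \(c\) with \(\hat t_1 \to_R^* c\) and \(\hat t_2 \to_R^* c\).
\item Irreducibility forces both reductions to have length zero, so \(\hat t_1 = c = \hat t_2\).
\end{itemize}

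For the converse, assume every term has a unique normal form, written \(\hat t\). We show that \(a \leftrightarrow_R^* b\) implies \(\hat a = \hat b\). Once this holds, \(c = \hat a\) is a common reduct of \(a\) and \(b\), which is exactly the Church-Rosser property.

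The key observation is that if \(a \to_R b\), then \(\hat a = \hat b\). Indeed, \(a \to_R b \to_R^* \hat b\), so \(\hat b\) is an irreducible reduct of \(a\), and uniqueness gives \(\hat b = \hat a\).

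Now induct on the length of a zigzag \(a = a_0 \leftrightarrow a_1 \leftrightarrow \dots \leftrightarrow a_k = b\), where each step goes in one direction or the other. By the observation, each single step, in either direction, preserves the normal form. Hence \(\hat a = \hat b\).

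The only point requiring care is the reading of \(\leftrightarrow^*\). The paper calls it the "symmetric closure of \(\to^*\)". Strictly speaking, Church-Rosser should use the reflexive symmetric transitive closure, i.e.\ equivalence under arbitrary zigzags. The argument above covers arbitrary zigzags, so it proves the statement under either reading.

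Nothing here is specific to terms: the argument works for any well-founded relation. Reducedness is not used.
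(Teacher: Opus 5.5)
Your proof is correct and follows the same route as the paper: uniqueness of normal forms from the Church--Rosser property in one direction, and in the other, invariance of the normal form along a \(\leftrightarrow_R\)-zigzag by induction on its length (you also make explicit the existence of normal forms via termination and the single-step invariance, which the paper leaves implicit). One small correction to your closing remark: the forward direction does not survive the literal reading ``symmetric closure of \(\to^*\)'', because your step \(\hat t_1 \leftrightarrow_R^* \hat t_2\) passes back through \(t\) and so needs transitivity---indeed, under that reading every relation is Church--Rosser and the lemma is false, so only the equivalence-closure reading makes sense.
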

\begin{proof}
Suppose that \(R\) is Church-Rosser.
If \(t \to_R^* s_1\) and \(t \to_R^* s_2\) for a term \(t\) and \(R\)-irreducible terms \(s_1,s_2\), then, by the Church-Rosser property, we have \(s_1 = s_2\).
Conversely, suppose that for any term \(t\), there exists a unique \(R\)-irreducible term \(\hat t\) such that \(t \to_R^* \hat t\).
For terms \(t_1, t_2\) with \(t_1 \approx_R t_2\), let \(\hat t_1, \hat t_2\) be the \(R\)-irreducible terms such that \(t_1 \to_R^* \hat t_1\) and \(t_2 \to_R^* \hat t_2\).
Then, \(\hat t_1 = \hat t_2\) is easily proved by the induction on the length of \(t_1 \leftrightarrow_R^* t_2\).
\end{proof}

\begin{proposition}
For any complete TRS \(R\), there exists a reduced complete TRS \(R'\) such that \({\leftrightarrow_R^*} = {\leftrightarrow_{R'}^*}\).
\end{proposition}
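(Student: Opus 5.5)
We follow the classical interreduction construction (as in Baader--Nipkow) and use \cref{lem:rewriting_subrelation} to transfer completeness. Let \(\hat t\) denote the \(R\)-normal form of a term \(t\), which exists uniquely by completeness of \(R\) and the preceding lemma.

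First define
\[
R_1 = \ab\{ l \to \hat r \mid l \to r \in R\}.
\]
Each rule of \(R_1\) is contained in \(\to_R^+\), since \(l \to_R r \to_R^* \hat r\). Hence \(\to_{R_1} \subseteq \to_R^+\), so \(R_1\) is terminating. The left-hand sides of \(R_1\) coincide with those of \(R\), so a term is \(R_1\)-irreducible iff it is \(R\)-irreducible. Consequently an \(R_1\)-normal form of \(t\) is an \(R\)-irreducible term reachable from \(t\) by \(\to_R^*\), i.e.\ it equals \(\hat t\). By \cref{lem:rewriting_subrelation}, \(R_1\) is Church--Rosser with \(\leftrightarrow_{R_1}^* \subseteq \leftrightarrow_R^*\). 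For the reverse inclusion, \(l \leftrightarrow_{R_1}^* r\) holds for each \(l\to r\in R\): the term \(r\) is \(R_1\)-reducible to its normal form \(\hat r\), and \(l \to_{R_1} \hat r\). Since \(\approx\) is a congruence, this gives \(\leftrightarrow_R^* = \leftrightarrow_{R_1}^*\).

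Next define
\[
R' = \ab\{ l \to r \in R_1 \mid l \text{ has no proper generalized subterm } l' \ltgensub l \text{ with } l' \text{ a left-hand side of } R_1 \},
\]
after choosing one representative for each set of rules whose left-hand sides are \(\sim_\alpha\)-equal. Any two such rules have the same right-hand side \(\hat l\) up to renaming, because the right-hand side is determined as the \(R\)-normal form of \(l\).

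The main check is that \(R'\) has the same irreducible terms as \(R_1\). This follows from well-foundedness of \(\ltgensub\), since any term has only finitely many lower bounds modulo \(\sim_\alpha\). If \(t\) is \(R_1\)-reducible, some left-hand side \(l\) satisfies \(l\legensub t\). Choose \(l\) minimal with respect to \(\ltgensub\) among the left-hand sides of \(R_1\). Then \(l\) is a left-hand side of \(R'\), and transitivity of \(\legensub\) gives \(l \legensub t\), so \(t\) is \(R'\)-reducible.

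With this, the remaining properties follow:

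\begin{itemize}
  \item \(R'\) is terminating, because \(\to_{R'}\subseteq\to_{R_1}\).
  \item \(R'\)-normal forms are \(R_1\)-normal forms, so \cref{lem:rewriting_subrelation} makes \(R'\) Church--Rosser with \(\leftrightarrow_{R'}^*\subseteq\leftrightarrow_{R_1}^*\).
  \item Conversely, every \(R_1\)-rule \(l\to\hat l\) satisfies \(l\to_{R'}^*\hat l\), since the \(R'\)-normal form of \(l\) is its \(R_1\)-normal form \(\hat l\). Hence \(\leftrightarrow_{R'}^* = \leftrightarrow_R^*\).
  \item \(R'\) is reduced. Each right-hand side \(\hat l\) is \(R\)-irreducible, hence \(R'\)-irreducible. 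If \(l\) were reducible by another rule \(l'\to r'\) of \(R'\), then \(l'\legensub l\). Equality modulo \(\sim_\alpha\) is excluded by the choice of representatives, and the proper case \(l'\ltgensub l\) is excluded by minimality.
\end{itemize}

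The step I expect to need the most care is this last argument. One must verify that \(\legensub\) is a well-founded partial order modulo \(\sim_\alpha\), which is noted earlier in the paper, and handle the \(\sim_\alpha\)-duplicates correctly.
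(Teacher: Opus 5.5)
Your proof is correct and follows the paper's two-stage route: normalize right-hand sides to get \(R_1=R''\), then interreduce left-hand sides, applying \cref{lem:rewriting_subrelation} at each stage; for \(\leftrightarrow_R^*\subseteq\leftrightarrow_{R_1}^*\) you give a direct argument where the paper applies the lemma a second time. At the interreduction step you are more careful than the paper: choosing a \(\ltgensub\)-minimal left-hand side supplies the justification the paper omits when it asserts that the reducing rule \(l'\to r'\) lies in \(R'\), and keeping one representative per \(\sim_\alpha\)-class of left-hand sides avoids the paper's definition letting two rules with \(\sim_\alpha\)-equal left-hand sides eliminate each other.
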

\begin{proof}
Let \(R\) be a complete TRS and
\(R'' = \{l \to \hat r \mid l \to r \in R, \hat r \text{ is the \(R\)-normal form of } r\}\).
Since \({\to_{R''}} \subset {\to_R^+}\) and \(R\) is terminating, \({\leftrightarrow_{R''}^*}\subset {\leftrightarrow_{R}^*}\) and
\(R''\) is terminating.
Any \(R''\)-irreducible term \(t\) is \(R\)-irreducible since \(R\) and \(R''\) have the same set of left-hand sides of rules.
Then, we can apply \cref{lem:rewriting_subrelation} to conclude that \(R''\) is Church-Rosser.

We show \({\leftrightarrow_R^*}\subset {\leftrightarrow^*_{R''}}\) by applying \cref{lem:rewriting_subrelation} again.
\(R\)-irreducible terms are also \(R''\)-irreducible since \(\to_{R''} \subset \to_R^+\).
We show that \(\hat t\) is the \(R''\)-normal form of \(t\) if \(\hat t\) is the \(R\)-normal form of \(t\) by the well-founded induction on \(t\) with respect to the well-founded relation \(\to_{R''}\).
If \(t\) is \(R\)-irreducible, then \(t = \hat t\).
Otherwise, there exists \(t'\) such that \(t \to_{R''} t'\).
Since \(R\) is complete and \(t \to_R^* t'\), we have \(t' \to_R^* \hat t\).
By the induction hypothesis, we have \(t' \to_{R''}^* \hat t\), so \(t \to_R^* \hat t\).
Therefore, we can apply \cref{lem:rewriting_subrelation} and obtain \({\leftrightarrow^*_{R}} \subset {\leftrightarrow^*_{R''}}\), hence \({\leftrightarrow^*_R} = {\leftrightarrow^*_{R''}}\).

Let \(R' = \{l \to r \in R'' \mid l \text{ is \((R''\setminus\{l\to r\})\)-irreducible}\}\).
By definition, \(R'\) is reduced.
We prove that \(R'\) is complete and \({\leftrightarrow^*_{R''}} = {\leftrightarrow^*_{R'}}\).
Since \({\to_{R'}}\subset {\to_{R''}}\) and \(R''\) is terminating, \(R'\) is also terminating, \(\approx_{R'}\subset \approx_{R''}\) and any \(R''\)-irreducible terms are \(R'\)-irreducible.
We show that any \(R''\)-reducible terms are \(R'\)-reducible.
Suppose that \(t\) is \(R''\)-reducible and can be rewritten by some \(l\to r \in R''\setminus R'\).
Then, there exists \(l' \to r' \in R''\setminus{\{l \to r\}}\) that can rewrite \(l\).
Since \(l'\) is a proper generalized subterm of \(l\),
we have \(l' \to r' \in R'\) and \(t\) can be rewritten by \(l' \to r'\).

We show that \(\hat t\) is the \(R'\)-normal form of \(t\) if \(\hat t\) is the \(R''\)-normal form of \(t\) by the well-founded induction on \(t\) with respect to the well-founded relation \(\to_{R''}\).
If \(t\) is \(R''\)-irreducible, then \(t = \hat t\).
Otherwise, \(t\) is \(R'\)-reducible, so there exists \(t'\) such that \(t \to_{R'} t'\) and \(t \to_{R''} t'\).
Since \(R''\) is complete, \(t' \to_{R''}^* \hat t\).
Since \(R'\) is complete and \(t \to_{R''}^* t'\), we have \(t' \to_{R'}^* \hat t\).
By the induction hypothesis, we have \(t' \to_{R'}^* \hat t\), so \(t \to_{R''}^* \hat t\).
Therefore, we can apply \cref{lem:rewriting_subrelation} and conclude that \(R'\) is Church-Rosser and \({\leftrightarrow^*_{R''}} \subset {\leftrightarrow^*_{R'}}\), hence \({\leftrightarrow^*_{R''}} = {\leftrightarrow^*_{R'}}\).
\end{proof}

A \emph{unifier} of a pair \((\vec t, \vec t')\) of two morphisms \(\vec t : X_1\timesdots X_n \to Y_1\timesdots Y_m\), \(\vec t' : X_1'\timesdots X_k' \to Y_1\timesdots Y_m \) in \(\syn{\varsign}\) is a pair \((\vec s, \vec s')\) of two morphisms \(\vec s : \vec Z \to X_1\timesdots X_n\), \(\vec s' : \vec Z \to X_1'\timesdots X_k'\) in \(\syn{\varsign}\) such that \(\vec t \vec s = \vec t' \vec s'\).
A unifier is the \emph{most general} if \(\vec t\vec s\) is essential and, for any other unifier \((\vec r , \vec r')\), there exists \(\vec u\) such that \(\vec r = \vec s\vec u\) and \(\vec r' = \vec s'\vec u\).

Church-Rosser property of a terminating TRS can be easily checked by using the notion of critical pairs.
\begin{definition}
Let \((\Gamma_i \mid_\emptyset l_i \to r_i) \in R\) (\(i=1,2\)).
If there is \(p \in \Pos(l_1)\) such that \(l_1|_p\) is not a variable and if \((\vec s_1,\vec s_2)\) is the most general unifier of \((l_1|_p, l_2)\), then we call the pair \((r_1 \vec s_1, t)\) a \emph{critical pair} where \(t\) is the term obtained from \(l_1\vec s_1\) by replacing the subterm at position \(p\) with \(r_2 \vec s_2\).
\end{definition}
\begin{lemma}\label{lem:cplem}
A terminating TRS \(R\) is Church-Rosser if and only if, for any critical pair \((t,t')\) of \(R\), there exists a term \(s\) such that \(t \to_R^* s\) and \(t' \to_R^* s\).
\end{lemma}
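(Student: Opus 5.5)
The argument is the classical Critical Pair Lemma of Knuth–Bendix. Since $R$ is terminating, Newman's lemma reduces the Church-Rosser property to local confluence. So I will prove two things: that joinability of critical pairs implies local confluence, and that local confluence together with termination gives Church-Rosser.

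The ``only if'' direction is immediate. For a critical pair $(r_1\vec s_1,t)$, both components are one-step reducts of the common term $l_1\vec s_1$, so $r_1\vec s_1 \leftrightarrow_R^* t$. The Church-Rosser property then gives a common reduct $s$.

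For the ``if'' direction I first prove Newman's lemma in the present setting. Assume every peak $t_1 \leftarrow_R t \to_R t_2$ is joinable. I show by well-founded induction on $t$ along $\to_R$ that any two reductions $t\to_R^* u_1$, $t \to_R^* u_2$ have a common reduct; the standard diamond-tiling argument applies. I then show that this confluence implies Church-Rosser by induction on the length of a zigzag $a \leftrightarrow_R^* b$. Alternatively, I can use the preceding lemma on unique normal forms.

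The main work is local confluence. Take $t = C_1[l_1\vec u_1] \to_R C_1[r_1\vec u_1]$ and $t = C_2[l_2\vec u_2]\to_R C_2[r_2\vec u_2]$, with redex positions $q_1,q_2$. I split into three cases.

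\begin{enumerate}
\item The positions $q_1,q_2$ are parallel, meaning neither is a prefix of the other. The two rewrites commute, and each result rewrites in one step to the same term.
\item Say $q_2 = q_1 p$ and $p$ passes through a variable position of $l_1$, that is, the redex of $l_2$ lies inside the substituted term $\vec u_1$ at some variable $x$. Let $\vec u_1'$ rewrite every occurrence of the affected component. Then $C_1[r_1\vec u_1] \to_R^* C_1[r_1\vec u_1']$, and the other side reaches $C_1[l_1\vec u_1']$ by rewriting the remaining copies of $x$ in $l_1$. One further step gives $C_1[r_1\vec u_1']$. Here I use that $\Var(r)\subseteq\Var(l)$ and that rewriting is closed under substitution and contexts.
\item Say $p$ is a non-variable position of $l_1$. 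Then $(\vec u_1,\vec u_2)$ is a unifier of $(l_1|_p, l_2)$. By existence of most general unifiers (syntactic unification), it factors as $\vec u_1 = \vec s_1\vec v$ and $\vec u_2 = \vec s_2\vec v$. The peak is then $C_1[\,\cdot\,\vec v]$ applied to a critical pair $(r_1\vec s_1, t')$. The assumed joining reductions $r_1\vec s_1\to^* s \leftarrow^* t'$ transport to a joining of the original peak, because $\to_R$ is closed under substitution and rewrite contexts.
\end{enumerate}

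I expect case~3 to be the main obstacle. It requires existence of a most general unifier in the paper's sense, with $\vec t\vec s$ essential, so I would invoke Robinson's unification theorem in $\syn{\varsign}$. It also requires careful bookkeeping of positions and variable renaming ($\sim_\alpha$) so that the peak genuinely is an instance of a critical pair. Case~2 is routine but needs the multi-occurrence handling.
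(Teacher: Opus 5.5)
Your proposal is correct and follows essentially the same argument the paper relies on. The paper gives no proof of its own; it cites Baader--Nipkow (Corollary 6.2.5), which is exactly the Critical Pair Lemma (your split into parallel redexes, overlaps at a variable, and overlaps at a non-variable position, the last reduced to an instance of a critical pair via a most general unifier) combined with Newman's lemma, with the morphism-based unifier bookkeeping you flag causing no difficulty.
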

\begin{proof}
See \cite[Corollary 6.2.5]{baader98}, for example.
\end{proof}

\subsection{Observations}
In this subsection, we observe that some notions in \ref{subsec:anick_monoid} cannot be simply extended to Lawvere theories.

%For a term \(t\), an \(n\) rewrite rules \(l_1\to r_1,\dots,l_n \to r_n\), and \(n\)  positions \(p_1,\dots,p_n\) in \(t\), we say that \((t, (l_1\to r_1,p_1),\dots,(l_n \to r_n,p_n))\) is an \(n\)-\emph{branching} if \(t\) can be rewritten by \(l_i \to r_i\) at \(p_i\) for each \(i=1,\dots,n\).

As an example, consider \(\varsorts = \{X\}\), \(\varsign = \{+, 0\}\) with \(\alpha(+) = (XX,X)\), \(\alpha(0) = (\epsilon,X)\), and the TRS \(R\) consisting of
\[
\rho_1 = (x\mid_\emptyset x+0 \to x), \quad \rho_2 = (x\mid_\emptyset 0 + x \to x)
\]
and we work on the category \(\syn{\varsign, R}\).
Obviously \(R\) is reduced.
\(R\) is terminating since the number of 0s in a term strictly decreases by applying a rule in \(R\), and Church-Rosser property of \(R\) can be proved by \cref{lem:cplem}.

We can identify \((\Gamma \mid_R t)\) in \(\syn{\varsign, R}\) with \((\Gamma \mid_\emptyset \hat t)\) in \(\syn{\varsign}\) where \(\hat t\) is the \(R\)-normal form of \(t\).

We shall say that a 1-cell \( (\omega) \) is a 1-chain if \(\omega = (x_1,x_2 \mid_\emptyset x_1 + x_2)\) or \(\omega = (\mid_\emptyset 0)\), which contains exactly one symbol in \(\varsign\).
Imitating \cref{def:anick_chain}, we may define a 2-cell \(((x_1,\dots,x_m\mid_\emptyset t), (y_1,\dots,y_n\mid_\emptyset (s_1,\dots,s_m)))\) to be a 2-chain if \((x_1,\dots,x_m\mid_\emptyset t)\) is a 1-chain, \(t' = t[s_1/x_1,\dots,s_m/x_m]\)  is \(R\)-reducible, and every proper generalized subterm of \(t'\) is \(R\)-irreducible.
This means that \(t = l\) for some \(\rho = (\Gamma \mid_\emptyset l \to r)\) in \(R\).
So, for our example, we have two 2-chains, \(((x_1,x_2 \mid_\emptyset x_1 + x_2), (x \mid_\emptyset (x,0)))\) and \(((x_1,x_2 \mid_\emptyset x_1 + x_2), (x \mid_\emptyset (0,x)))\).

Similarly, we may want to define an \(n\)-chain as an \(n\)-cell \((\omega_1,\vec\omega_2,\dots,\vec\omega_n)\) such that \((\omega_1,\vec\omega_2,\dots,\vec\omega_{n-1})\) is an \((n-1)\)-chain, \(t\) is \(R\)-reducible, and every proper generalized subterm of \(t\) is \(R\)-irreducible where \(\omega_{n-1} \omega_n = (\Gamma \mid_\emptyset t)\).
However, this definition does not give us a partial matching \(\cM\) as in \cref{def:anick_matching}.
For example, consider the 2-cell \(\tau=((x_1,x_2 \mid_\emptyset x_1 + x_2), (\mid_\emptyset (0,0)))\) and the two 3-cells \(\tau_1=((x_1,x_2 \mid_\emptyset x_1 + x_2), (x \mid_\emptyset (0,x)), (\mid_\emptyset 0))\) and \(\tau_2 = ((x_1,x_2 \mid_\emptyset x_1 + x_2), (x \mid_\emptyset (x,0)), (\mid_\emptyset 0))\), illustrated in \cref{fig:left_right_unit2_3}.
Since \(\tau\) is not a 2-chain and \((x_1,x_2 \mid_\emptyset x_1 + x_2)\) is a 1-chain, and since \((x\mid_\emptyset (0,x))(\mid_\emptyset 0) = (x\mid_\emptyset (x,0))(\mid_\emptyset 0) = (\mid_\emptyset (0,0))\), \(\cM\) should contain both \(\tau_1 \to \tau\) and \(\tau_2 \to \tau\) according to \cref{def:anick_matching}, but then \(\cM\) is not a partial matching.

To solve this problem, we will define \(n\)-chains in the way that one of \(\tau_1,\tau_2\) is a 3-chain, and the other one is adjacent to \(\tau\) in \(\cM\).

\begin{figure}[htb]
\centering
%\begin{minipage}{0.4\linewidth}
%
% Left and right unit 2
\begin{tikzpicture}
\node[circle,draw] (root) at (2,2) {+};
\node[circle,draw] (r1) at (1, 0) {\(0\)};
\node[circle,draw] (r2) at (3,1) {\(0\)};
\coordinate (interm) at (1,1);

\node at (2,-0.5) {$\tau_1$};

\draw (root) -- (interm);
\draw (root) -- (r2);
\draw (interm) -- (r1);

\draw[dashed] (0.5, 1.5) -- (3.5, 1.5);
\draw[dashed] (0.5, 0.5) -- (3.5, 0.5);
\end{tikzpicture}
~
% Left and right unit 3
\begin{tikzpicture}
\node[circle,draw] (root) at (2,2) {+};
\node[circle,draw] (r1) at (1, 1) {\(0\)};
\node[circle,draw] (r2) at (3,0) {\(0\)};
\coordinate (interm) at (3,1);

\node at (2,-0.5) {$\tau_2$};

\draw (root) -- (interm);
\draw (root) -- (r1);
\draw (interm) -- (r2);

\draw[dashed] (0.5, 1.5) -- (3.5, 1.5);
\draw[dashed] (0.5, 0.5) -- (3.5, 0.5);
\end{tikzpicture}
%\subcaption{\label{fig:left_right_unit2_3}}
%\end{minipage}
%\begin{minipage}{0.15\linewidth}
%% Left and right unit
%\begin{tikzpicture}[level distance=1cm]
%\pgfdeclarelayer{bg}
%\pgfsetlayers{bg,main}
%  \node[circle,draw] (root) {+}
%	child {node[circle,draw] (r1) {\(0\)}}
%	child {node[circle,draw] (r2) {\(0\)}};
%
%	\tikzset{
%	band/.style={line join=round,draw opacity=.8,line width=15pt},
%	}
%	\begin{pgfonlayer}{bg}
%	\draw[band,magenta,line cap=round] (root.center) -- (r2.center);
%	\draw[band,magenta] (root.center) -- (r1);
%	\draw[band,cyan,line cap=round,transform canvas={shift={(0.1,0)}}]
%		(r1.center) -- (root.center);
%	\draw[band,cyan,transform canvas={shift={(0.1,0)}}]
%		(r2) -- (root.center);
%
%	\end{pgfonlayer}
%
%	\coordinate (lab) at ($(r1.south)+(0,-0.4)$);
%	\fill[fill=magenta] ($(lab)+(0,-0.4)$) rectangle ++(0.5,-0.3);
%	\node at ($(lab)+(0.8,-0.55)$) {\(\rho_1\)};
%	\fill[fill=cyan] ($(lab)+(0,-0.8)$) rectangle ++(0.5,-0.3);
%	\node at ($(lab)+(0.8,-0.95)$) {\(\rho_2\)};
%\end{tikzpicture}
%\subcaption{\label{fig:left_right_unit1}}
%\end{minipage}
%\label{fig:left_right_unit}
\caption{}\label{fig:left_right_unit2_3}
\end{figure}

\subsection{Morse matching}

We construct a Morse matching \(\cM\) on the normalized bar resolution \(B_\bullet\) of an \(\varsorts\)-sorted Lawvere theory \(\varlaw\).
From now, we call \(\cM'\)-critical cells (i.e., generators of \(B_\bullet\)) just \emph{cells}.

Suppose that \(\varlaw\) is presented by \((\Sigma,R)\) where \(R\) is a reduced and complete TRS over \(\Sigma\).
Then, a morphism \(\vec\omega: \vec \varsort \to \vec{\varsort[1]}\) in \(\varlaw\) uniquely corresponds to \((\Gamma \mid_R \vec s)\) in \(\syn{\Sigma,R}\), and to \((\Gamma \mid_\emptyset \vec t)\) in \(\syn{\Sigma}\) where \(\vec t\) is the \(R\)-normal form of \(\vec s\).
Also, if \(\vec \omega\) is essential, \(\Gamma\) is uniquely determined as the list of variables in \(\vec t\).
So, we just write \(\vec t\) for essential \(\vec\omega = (\Gamma \mid_R \vec t)\).
Also, when we write \(\vec t \vec t'\) for \(R\)-irreducible \(\vec t\), \(\vec t'\), we mean it as the morphism \((\Gamma \mid_\emptyset \vec t)(\Gamma'\mid_\emptyset \vec t')\) in \(\syn{\Sigma}\), which may be \(R\)-reducible.
We write \(\vec t\star \vec t'\) for the \(R\)-normal form of \(\vec t \vec t'\).
In particular, with this notation, we have
\[
d_j(t_1,\vec t_2,\dots,\vec t_n) = (t_1, \vec t_2, \dots \vec t_{j-1},\vec t_j\star \vec t_{j+1},\vec t_{j+2},\dots,\vec t_n)
\]
for \(0 < j < n\). (Recall \ref{subsec:unnormalized} for the definition of \(d_j\).)

Note that \(\vec t\) contains at least one function symbol if \(\vec\omega_i\) is not a partial permutation by \cref{lem:term_effmor}.
For \(f \in \Sigma\), we also identify \(t = f(x_1,\dots,x_n)\) with \(f\).
In particular, we write \(t \in \Sigma\) if \(t\) is of the form \(f(x_1,\dots,x_n)\) for some \(f \in \Sigma\), and \(t \notin \Sigma\) if not.
%For \(R\)-irreducible tuples of term-in-contexts \(t_1,\vec t_2,\dots,\vec t_n\) that are composable as \(t_1\vec t_2\dots \vec t_n\), we write \((t_1\mid \vec t_2\mid \dots \mid \vec t_n)\) for the cell \((\omega_1,\vec\omega_2,\dots,\vec\omega_n)\) where \(\omega_1 = (\Gamma_1 \mid_R t_1)\), \(\vec\omega_i = (\Gamma_i \mid_R \vec t_i)\) for \(i=2,\dots,n\).

Let \(R_\mathrm{L} = \{l \mid \exists r.~l \to r \in R\}\),
\(P(\vec t) = \ab\{(p,l) \mid p \in \Pos\ab(\vec t), l \in R_{\mathrm{L}}, \vec t|_p = l\vec s\text{ for some }\vec s\}\).

We fix a well-founded total order \(<\) on \(R_\mathrm{L}\), i.e., a total order \(<\) such that there is no infinite increasing sequence in \(R_{\mathrm{L}}\).
We extend \(<\) to \(\bbZ_{>0}^*\times R_\mathrm{L}\) as \((p,l) < (p',l')\) if \(p\) is smaller than \(p'\) in the lexicographic order on \(\bbZ_{>0}^*\), or \(p = p'\) and \(l < l'\).
Write \(\mathbf{p}(\vec t)\) for the maximum in \(P(\vec t) \subset (\bbZ_{>0}^*\times R_\mathrm{L})\cup \{-\infty\}\) where \(-\infty\) is considered as the minimum, that is, \(\mathbf{p}(\vec t) = -\infty\) if and only if \(\vec t\) is \(R\)-irreducible.

The following lemma is obvious by definition.
\begin{lemma}
For any \(\vec t,\vec s\), \(\mathbf{p}(\vec t) \le \mathbf{p}(\vec t \vec s)\).
\end{lemma}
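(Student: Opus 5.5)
The plan is to show the inclusion \(P(\vec t) \subseteq P(\vec t\vec s)\). Since \(\mathbf{p}\) is defined as the maximum of \(P(\cdot)\), the inequality \(\mathbf{p}(\vec t) \le \mathbf{p}(\vec t\vec s)\) follows at once. Here \(\vec t\vec s\) is the composite in \(\syn{\varsign}\), i.e.\ the tuple obtained from \(\vec t\) by substituting the components of \(\vec s\) for the variables. If \(P(\vec t)\) is empty, then \(\mathbf{p}(\vec t) = -\infty\) and there is nothing to prove. So I assume \((p,l) \in P(\vec t)\).

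The first step is to check that positions survive substitution. Every \(p \in \Pos(\vec t)\) also lies in \(\Pos(\vec t\vec s)\), and \((\vec t\vec s)|_p = (\vec t|_p)\vec s'\), where \(\vec s'\) restricts \(\vec s\) to the variables of \(\vec t|_p\). I would prove this by a routine induction on \(p\), using the inductive definition of \(t|_{ip'}\) and the clause \(f(t_1,\dots,t_n)[\vec u/\vec x] = f(t_1[\vec u/\vec x],\dots,t_n[\vec u/\vec x])\). The tuple case reduces to the single-term case through the first letter of \(p\).

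The second step is to check that instances compose. By assumption \(\vec t|_p = l\vec r\) for some \(\vec r\). Then \((\vec t\vec s)|_p = (l\vec r)\vec s' = l(\vec r\vec s')\), by associativity of composition in \(\syn{\varsign}\), which is associativity of substitution. This shows \((p,l) \in P(\vec t\vec s)\). Hence \(P(\vec t) \subseteq P(\vec t\vec s)\), and taking maxima in the total order on \((\bbZ_{>0}^*\times R_\mathrm{L})\cup\{-\infty\}\) gives the claim.

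I see no real obstacle. The only care needed is bookkeeping: \(\vec t\vec s\) must be read as the raw composite in \(\syn{\varsign}\), not as the normal form \(\vec t\star\vec s\), for which the statement can fail. The sub-tuple \(\vec s'\) must be matched correctly to the variables, which is harmless up to \(\sim_\alpha\).
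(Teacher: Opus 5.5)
Your proof is correct and is the same argument the paper has in mind: the paper only says the lemma is ``obvious by definition,'' and your inclusion \(P(\vec t)\subseteq P(\vec t\vec s)\) (positions and matching instances of left-hand sides survive substitution), followed by monotonicity of the maximum, is exactly that unpacking. You are also right to read \(\vec t\vec s\) as the raw composite in \(\syn{\varsign}\) rather than \(\vec t\star\vec s\); that is the paper's notation, and it is the reading under which the lemma is used in the definition of chains.
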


\begin{definition}
The 0-cell \(()_{\varsort}\) is a 0-chain for any \(\varsort\).
A 1-cell \((t_1)\) is an 1-chain if \(t_1 \in \varsign\).
For \(n > 1\), an \(n\)-cell \(\tau = (t_1,\vec t_2,\dots,\vec t_n)\) is an \(n\)-chain
if \(\tau'=(t_1,\vec t_2,\dots,\vec t_{n-1})\) is an \((n-1)\)-chain, \(\bfp(t_1\vec t_2\dots \vec t_{n-1}) < \bfp(t_1\vec t_2\dots \vec t_n)\), and there exists \(\vec s\) such that \((\vec t_n,\vec s)\) is the most general unifier of \(((t_1\vec t_2\dots \vec t_{n-1})|_p, l)\) for \((p,l) = \bfp(t_1\vec t_2\dots \vec t_n)\).
\end{definition}
\begin{example}\label{ex:chain}
Consider the TRS in the previous subsection.
Then, \(R_\mathrm{L} = \{x + 0, 0 + x\}\), and we set \(x + 0 < 0 + x\).
We have \(\bfp(x + 0)  < \bfp(0 + 0)\), so \(\tau_1 = ((x_1,x_2 \mid_\emptyset x_1 + x_2), (x\mid_\emptyset (x,0)), (\mid_\emptyset 0))\) is a 3-chain.
On the other hand, \(\bfp(0 + x) = \bfp(0 + 0)\), so \(\tau_2 = ((x_1,x_2 \mid_\emptyset x_1 + x_2), (x\mid_\emptyset (0,x)), (\mid_\emptyset 0))\) is not a 3-chain.
\end{example}

\begin{lemma}
There are one-to-one correspondences between the set of 0-chains and \(\varsorts\), between the set of 1-chains and \(\Sigma\), and between the set of 2-chains and \(R\).
\end{lemma}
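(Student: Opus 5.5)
The plan is to treat the three correspondences separately, checking each directly against the definition of $n$-chains and the normalized cells (the $\cM'$-critical cells, whose entries are essential and not identities).

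\textbf{0-chains.} By definition the 0-chains are exactly the generators $()_{\varsort}$ of $B_0^u$, one for each $\varsort\in\varsorts$. These are automatically $\cM'$-critical, because no edge of $\cM'$ involves a 0-cell: every matched edge goes between cells of length at least one. So $\varsort\mapsto ()_\varsort$ is the required bijection.

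\textbf{1-chains.} A 1-chain is a normalized 1-cell $(t_1)$ with $t_1\in\varsign$, meaning $t_1=f(x_1,\dots,x_n)$. Send $f$ to the cell $(f(x_1,\dots,x_n))$, where the variables are distinct and listed in the order fixed by the chosen representative. This term is efficient by \cref{lem:term_effmor}, since every variable of the context occurs in it. It is $R$-irreducible: a rule $l\to r$ with $l$ a generalized subterm of $f(\vec x)$ would force $l=f(\vec y)$ up to renaming, since $l$ is not a variable. Then every term of $f$'s sort and arity would rewrite, contradicting the fact that $R$ is reduced and complete presents a theory in which $f(\vec x)$ is a normal form. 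I will treat this point with care, because it can fail if $f(\vec x)$ is itself a left-hand side. I will take it as the implicit assumption that $\varsign$ is minimal, or else read ``$t_1\in\varsign$'' as already presupposing that $t_1$ is an $R$-normal essential morphism. By \cref{lem:term_effmor}, the $\sim_\Pi$-class determines $f$, so the map is injective. Surjectivity is immediate from the definition.

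\textbf{2-chains (the main case).} Take a 2-chain $(f,\vec t_2)$. The 1-cell $(f)$ is irreducible, so $\bfp(f)=-\infty$, and the condition $\bfp(f)<\bfp(f\vec t_2)$ says that $f\vec t_2$ is reducible. Put $(p,l)=\bfp(f\vec t_2)$. The chain condition says $(\vec t_2,\vec s)$ is the most general unifier of $(f|_p,l)$. If $p\neq\epsilon$, then $f|_p$ is a variable, and the most general unifier of a variable and $l$ makes $\vec t_2$ substitute $l$ into one coordinate and fresh variables elsewhere. That $\vec t_2$ is not $R$-irreducible, because it contains $l$, so it is not a valid entry. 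Hence $p=\epsilon$. Then $l$ has head symbol $f$, and the most general unifier of $f(\vec x)$ with $l=f(l_1,\dots,l_n)$ is $\vec t_2=(l_1,\dots,l_n)$, with $\vec s$ the identity up to permutation. So the 2-chain is $(f,(l_1,\dots,l_n))$, and $l$ is determined as $f\vec t_2$.

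The assignment $\rho=(l\to r)\mapsto (\mathrm{head}(l),\text{args}(l))$ is then shown to be a bijection. For the arguments $\vec t_2$ to form a normalized cell, four things must hold. They are $R$-irreducible, because $R$ is reduced and so every proper subterm of $l$ is irreducible. They are not a partial permutation, because otherwise $l=f(\vec x)$ up to renaming and $(f)$ would not be a 1-chain. They are efficient once variables are taken to be exactly those of $\vec t_2$. They are essential after choosing the representative, which only permutes variables and does not change the $\sim_\alpha$-class. The pair $(p,l)$ is maximal in $P(l)$: any other $(p',l')$ with $p'\neq\epsilon$ would make a proper subterm of $l$ reducible, and if $p'=\epsilon$ then $l'=l$ by reducedness, since two distinct left-hand sides matching at the root would make one of them rewrite the other. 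Reducedness also makes the map from rules to left-hand sides injective: two rules with the same $l$ would make $l$ reducible by the other rule. The real obstacle is this bookkeeping of $\sim_\alpha$ and $\sim_\Pi$ versus the chosen essential representatives, which I would handle uniformly through \cref{lem:term_effmor}.
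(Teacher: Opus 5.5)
Your approach is the same as the paper's: unpack the definitions case by case (the paper's proof is just ``obvious from the definition of chains''). The 0-chain case and most of the 2-chain bookkeeping are fine. The 1-chain step, however, rests on a false claim, and that is exactly where the lemma as stated needs an extra hypothesis. Reducedness and completeness do \emph{not} make $f(x_1,\dots,x_n)$ $R$-irreducible. Take one sort, $\Sigma=\{f,g\}$ with $f,g$ unary, and $R=\{x\mid_\emptyset f(x)\to g(x)\}$. This $R$ is terminating, its only overlap is trivial, and it is reduced. In $\varlaw$ we have $(x\mid_R f(x))=(x\mid_R g(x))$ with normal form $g(x)$, so there is exactly one 1-chain, $(g(x))$. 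Every cell entry $\vec t_2$ is $R$-irreducible, hence $f$-free, so $g\vec t_2$ is irreducible and there are no 2-chains at all, although $\#\Sigma=2$ and $\#R=1$. So the ``contradiction'' you invoke does not exist. Your fallback, reading ``$t_1\in\Sigma$'' as presupposing an $R$-normal essential morphism, does not rescue the bijection either. Under that reading, a symbol whose flat term is a left-hand side simply has no 1-chain (or collapses onto another symbol's), so you only get a bijection with a subset of $\Sigma$.

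What is really needed is the hypothesis that no rule of $R$ has a left-hand side $f(x_1,\dots,x_n)$ with pairwise distinct variables; equivalently, every $f(\vec x)$ is $R$-irreducible. Your ``$\Sigma$ minimal'' implies this, since such a rule would define $f$ by an $f$-free normal form. The paper's one-line proof silently assumes it too. Under this hypothesis your argument works. It is also exactly what you use in the 2-chain case to see that $(\mathrm{head}(l))$ is a 1-chain and that $\mathrm{args}(l)$ is not a partial permutation. Without the hypothesis, the correct statement is different:
1-chains correspond to $\Sigma_0=\{f\in\Sigma : f(\vec x)\text{ is }R\text{-irreducible}\}$, and 2-chains correspond to the rules $R_0\subseteq R$ whose left-hand side is not of that flat form. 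By reducedness, each $f\notin\Sigma_0$ has exactly one rule $f(\vec x)\to r$, and every rule in $R_0$ has its head symbol in $\Sigma_0$. Hence $\#\Sigma-\#\Sigma_0=\#R-\#R_0$, and the difference $\#R-\#\Sigma$ entering the Morse inequalities is unaffected. Finally, \cref{lem:term_effmor} does not settle the representative bookkeeping you defer to it. The unifier clause demands that $f\vec t_2$ itself be essential, and whether it is depends on how the representatives were chosen. The paper glosses over this as well; reading that clause as ``efficient'' is what makes both arguments go through.
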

\begin{proof}
Obvious from the definition of chains.
\end{proof}
\begin{lemma}
If \(\varsorts, \varsign, R\) are finite sets, then the set of \(n\)-chains is finite for each \(n\).
\end{lemma}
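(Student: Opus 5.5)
Induction on \(n\), proving the stronger statement that the set of \(n\)-chains is finite \emph{modulo} the identifications already built into the paper: essential morphisms are canonical representatives, so by \cref{lem:term_effmor} a cell \((t_1,\vec t_2,\dots,\vec t_n)\) is a tuple of \(R\)-irreducible terms determined up to the chosen representative. The base cases \(n=0,1,2\) follow from the preceding lemma: the chains there correspond bijectively to \(\varsorts\), \(\varsign\), and \(R\), which are finite.

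For the inductive step, fix an \((n-1)\)-chain \(\tau'=(t_1,\vec t_2,\dots,\vec t_{n-1})\); there are finitely many by hypothesis. Write \(u=t_1\vec t_2\cdots\vec t_{n-1}\). I will show that only finitely many \(\vec t_n\) extend \(\tau'\) to an \(n\)-chain. By definition, \((p,l)=\bfp(u\vec t_n)\) must satisfy \(\bfp(u)<(p,l)\), and \(\vec t_n\) is the first component of the most general unifier of \((u|_p,l)\).

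The key observation is that \(p\) must be a position of \(u\) itself, not a position lying inside a substituted term from \(\vec t_n\). Indeed, \(u|_p\) is required to be defined. Since \(u\) is a fixed finite term, \(\Pos(u)\) is finite, and since \(R\) is finite, \(R_{\mathrm L}\) is finite. So there are finitely many pairs \((p,l)\). For each pair, the most general unifier of \((u|_p,l)\), if it exists, is unique up to the permutation/renaming ambiguity. That ambiguity is removed by requiring \(\vec t_n\) to be the chosen essential representative, so each pair \((p,l)\) determines at most one \(\vec t_n\). This uniqueness is the standard fact for syntactic unification: any two most general unifiers factor through each other, hence differ by an invertible morphism of \(\syn{\varsign}\), that is, a permutation.

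Hence the number of \(n\)-chains is at most \(\#\{(n-1)\text{-chains}\}\cdot\max|\Pos(u)|\cdot\#R_{\mathrm L}\), which is finite. The main obstacle is making the uniqueness claim rigorous in the paper's setting. Specifically, I must check that two most general unifiers \((\vec s,\vec s')\) and \((\vec r,\vec r')\) satisfy \(\vec r=\vec s\vec v\) for an invertible \(\vec v\), and that such a \(\vec v\) is a permutation. For the first part, mutual factorization gives \(\vec s=\vec s\vec u\vec u'\). Efficiency of \(u|_p\vec s\) (from the essentiality requirement in the definition) then forces \(\vec u\vec u'\) to act as the identity on the variables that occur. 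For the second part, I will argue that an invertible morphism in the free theory \(\syn{\varsign}\) maps variables to variables, and then apply the second half of \cref{lem:term_effmor}. If extra care is needed, I would bypass this by bounding, for each pair \((p,l)\), the number of \(\sim_\Pi\)-classes of candidate \(\vec t_n\) by one, and using that representatives are fixed.
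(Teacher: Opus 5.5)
Your proof is correct and follows the paper's argument. Both proceed by induction on \(n\), using finitely many \((n-1)\)-chains, finitely many positions \(p\in\Pos(u)\), and finitely many left-hand sides \(l\in R_{\mathrm L}\), with finitely many (in fact at most one) \(\vec t_n\) for each pair \((p,l)\). The paper only asserts that last finiteness. Your mutual-factorization argument supplies the justification it leaves implicit: efficiency forces the comparison map to be a permutation, and essentiality then pins down the representative.
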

\begin{proof}
The sets of 0- and 1-chains are obviously finite.
Suppose that the set of \((n-1)\)-chains is finite.
Let \((t_1,\vec t_2 ,\dots ,\vec t_{n-1})\) be an \((n-1)\)-chain.
Since the number of choices of \(l \in R_\mathrm{L}\), \(\vec t_n\), and \(p \in \Pos\ab(t_1\vec t_{2}\dots \vec t_{n-1})\) such that \((\vec t_n,\vec s)\) is the most general unifier of \(\ab(\ab(t_1\vec t_{2}\dots \vec t_{n-1})|_p,l)\) for some \(\vec s\) is finite, there are only finite number of \(\vec t_n\) such that \((t_1,\vec t_2 ,\dots ,\vec t_n)\) is an \(n\)-chain.
\end{proof}

For an \(n\)-cell \(\tau = (t_1,\vec t_2,\dots,\vec t_n)\), define \(L(\tau)\) to be the largest integer in \(\{1,\dots,n\}\) such that \((t_1,\vec t_2,\dots,\vec t_{L(\tau)-1})\) is an \((L(\tau)-1)\)-chain.

\begin{definition}
We define \(\cM = \cM_{\Sigma,R}\) as the set of edges of the form
\begin{equation}\label{eqn:morse_edge_lawvere}
(t_1,\vec t_2,\dots,\vec t_{L(\tau)}', \vec t_{L(\tau)}'', \dots, \vec t_n) \to (t_1, \vec t_2, \dots, \vec t_n) = \tau, \quad (\vec t_{L(\tau)} = \vec t_{L(\tau)}'\vec t_{L(\tau)}'')
\end{equation}
that satisfy one of the following conditions:
\(L(\tau)=1\) and \(t_{1}' \in \varsign\), or
\(L(\tau)>1\) and
there exists \(\vec s\) such that \((\vec t_{L(\tau)}',\vec s)\) is the most general unifier of \((t|_p, l)\) for \(t=t_1\vec t_2\dots \vec t_{L(\tau)-1}\) where \((p,l) = \bfp(t \vec t_{L(\tau)})\).
We say that \(\vec t_{L(\tau)} = \vec t_{L(\tau)}' \vec t_{L(\tau)}''\) is the \emph{\(t\)-decomposition}.
\end{definition}

\begin{example}
Again, consider the TRS in the previous subsection and the order on \(R_\mathrm{L}\) in Example \ref{ex:chain}.
Let \(\tau = ((x_1,x_2\mid_\emptyset x_1 + x_2), (\mid_\emptyset (0,0)))\).
Then, we can check \(L(\tau) = 2\) and \(\ab(\mid_\emptyset (0,0)) = (x\mid_\emptyset (0,x)) (\mid_\emptyset 0) \) is the \((0+0)\)-decomposition.
Therefore, \(\cM\) contains the edge \(\tau_2 = ((x_1,x_2\mid_\emptyset x_1 + x_2), (x\mid_\emptyset (0,x)), (\mid_\emptyset 0)) \to ((x_1,x_2\mid_\emptyset x_1 + x_2), (\mid_\emptyset (0,0)))\).
\end{example}

By construction, no two distinct edges in \(\cM\) share the common source.
Also, since the choice of \((\vec t_{L(\tau)}',\vec t_{L(\tau)}'')\) is unique, no two distinct edges in \(\cM\) share the common target.

\begin{lemma}
For any edge \eqref{eqn:morse_edge_lawvere} in \(\cM\),
the cell \(\tau' = (t_1,\vec t_2,\dots,\vec t_{L(\tau)-1}, \vec t_{L(\tau)}')\) is an \(L(\tau)\)-chain, and there do not exist \(\vec s,\vec s'\) such that \(\vec t_{L(\tau)}'' = \vec s \vec s'\) and \((t_1,\vec t_2,\dots,\vec t_{L(\tau)-1}, \vec t_{L(\tau)}', \vec s')\) is an \((L(\tau)+1)\)-chain.
\end{lemma}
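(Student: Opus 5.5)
Write \(L = L(\tau)\), \(t = t_1\vec t_2\dots\vec t_{L-1}\) (interpreting \(t\) as \(t_1\) when \(L=1\)), and let \(\vec t_L = \vec t_L'\vec t_L''\) be the \(t\)-decomposition. The plan has two parts: first check that \(\tau' = (t_1,\dots,\vec t_{L-1},\vec t_L')\) satisfies the definition of an \(L\)-chain, then show that no factor of \(\vec t_L''\) can extend \(\tau'\) to an \((L+1)\)-chain.

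\textbf{Step 1: \(\tau'\) is an \(L\)-chain.}

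If \(L=1\), the edge condition gives \(t_1'\in\varsign\), so \((t_1')\) is a \(1\)-chain by definition.

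Now let \(L>1\). By definition of \(L(\tau)\), the prefix \((t_1,\dots,\vec t_{L-1})\) is an \((L-1)\)-chain. Let \((p,l) = \bfp(t\vec t_L)\). By the edge condition, \((\vec t_L',\vec s)\) is the most general unifier of \((t|_p,l)\).

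\begin{itemize}
\item It follows that \(t\vec t_L'|_p = l\vec s\), so \((p,l)\in P(t\vec t_L')\).
\item Since \(t\vec t_L' \ltimes \vec t_L'' = t\vec t_L\), the monotonicity lemma gives \(\bfp(t\vec t_L')\le \bfp(t\vec t_L) = (p,l)\).
\item Together these give \(\bfp(t\vec t_L') = (p,l)\).
\end{itemize}

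It remains to show \(\bfp(t) < (p,l)\). Again \(\bfp(t)\le\bfp(t\vec t_L)\). Suppose equality held. Then \(t|_p = l\vec s_0\) for some \(\vec s_0\), so \((\Id,\vec s_0)\) is a unifier. The universal property of the most general unifier then forces \(\vec t_L'\) to factor through an identity. Since \(\vec t_L'\) is essential, it would itself be a partial permutation (an identity), contradicting that \(\tau'\) is a normalized cell.

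This equality case is the delicate point. I would handle it using the fact that \(t\vec t_L'\) must be essential (part of the definition of a most general unifier), together with Lemma~\ref{lem:term_effmor}: an essential morphism that factors through a partial permutation is itself a permutation.

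With \(\bfp(t) < (p,l)\) established, the three conditions in the definition of a chain hold for \(\tau'\).

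\textbf{Step 2: no extension of \(\tau'\) to an \((L+1)\)-chain.}

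Suppose \(\vec t_L'' = \vec s\vec s'\) and \((\dots,\vec t_L',\vec s')\) is an \((L+1)\)-chain. Then
\[
\bfp(t\vec t_L') < \bfp(t\vec t_L'\vec s').
\]
By monotonicity applied to the factorization \(t\vec t_L = (t\vec t_L'\vec s')\vec s\) (up to the ordering convention of composition), we get
\[
\bfp(t\vec t_L'\vec s') \le \bfp(t\vec t_L) = (p,l).
\]
Combining with Step 1, this reads \((p,l) < \bfp(t\vec t_L'\vec s') \le (p,l)\), a contradiction.

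One caveat concerns the ordering of the composite \(\vec t_L'' = \vec s\vec s'\), i.e. which factor is applied first. If \(\vec s'\) is the first factor, so that \(t\vec t_L'\vec s'\) is a prefix composite of \(t\vec t_L\), the monotonicity inequality above is immediate. Otherwise I would first rewrite the composite so that \(t\vec t_L'\vec s'\) is exhibited as a substitution instance sitting below \(t\vec t_L\), and then apply monotonicity.

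\textbf{Main obstacle.} I expect the main difficulty to be the equality case \(\bfp(t) = \bfp(t\vec t_L)\) in Step 1. It requires carefully using the essentiality clause in the definition of a most general unifier, combined with the decomposition lemma (unique essential \(\times\) partial-permutation factorization), to rule out \(\vec t_L'\) being trivial.
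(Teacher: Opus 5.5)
Your proposal is correct and is essentially the paper's proof. Step 2 is exactly the paper's sandwich \(\bfp(t\vec t_L')<\bfp(t\vec t_L'\vec s)\le\bfp(t\vec t_L)=\bfp(t\vec t_L')\), with the extending component taken to be the factor of \(\vec t_L''\) adjacent to \(\vec t_L'\), as in the paper's proof; the other reading does not typecheck in general, so your ``otherwise'' branch is unnecessary. For \(L=1\), take \(t\) to be the empty composite rather than \(t_1\); the sandwich then reads \(\bfp(t_1')<\bfp(t_1'\vec s)\le\bfp(t_1)=-\infty\), which is impossible because \(t_1\) is \(R\)-irreducible. Your Step 1 supplies the verification, including \(\bfp(t)<\bfp(t\vec t_L')\), that the paper dismisses as ``straightforward by definition''.

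In your delicate case, the clean reason is as follows. Universality against the unifier \((\Id,\vec s_0)\) gives \(\vec t_L'\vec u=\Id\) in \(\syn{\varsign}\). This forces \(\vec t_L'\) to be a tuple of distinct variables, i.e.\ a partial permutation, hence (being essential) an identity, which is impossible for a component of a normalized cell. Essentiality of the unified term plays no role there.
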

\begin{proof}
It is straightforward that \(\tau'\) is an \(L(\tau)\)-chain by definition.
Assume for contradiction that there exist \(\vec s,\vec s'\) such that \(\vec t_{L(\tau)}'' = \vec s \vec s'\) and \(\tau'' = (t_1,\vec t_2,\dots,\vec t_{L(\tau)-1}, \vec t_{L(\tau)}', \vec s')\) is an \((L(\tau)+1)\)-chain.
Since \(\bfp(t\vec t_{L(\tau)}'\vec s) \le\bfp(t\vec t_{L(\tau)}) = \bfp(t\vec t_{L(\tau)}') \) and \(\bfp(t\vec t_{L(\tau)}') < \bfp(t\vec t_{L(\tau)}'\vec s)\), it contradicts.
\end{proof}
From this lemma, we can see that no two edges in \(\cM\) are adjacent.
So, we have
\begin{lemma}
\(\cM\) is a partial matching on \(B_\bullet\).
\end{lemma}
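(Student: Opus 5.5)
To prove that \(\cM\) is a partial matching, I will check that no vertex of \(G_{B_\bullet}\) is incident to two distinct edges of \(\cM\). I will also verify that each edge in \(\cM\) is a genuine edge of \(G_{B_\bullet}\) between cells of \(B_\bullet\). There are three ways a vertex could meet two edges: two edges sharing a source, two sharing a target, or a vertex that is the source of one edge and the target of another. The first two were settled in the remarks immediately preceding the previous lemma, and I will restate them briefly. Two edges with a common source \(\sigma=(t_1,\dots,\vec t_{L}',\vec t_L'',\dots,\vec t_n)\) both have target \(d_L(\sigma)\), so they coincide. Two edges with a common target \(\tau\) have the same \(L(\tau)\), and the factorization \(\vec t_{L(\tau)}=\vec t_{L(\tau)}'\vec t_{L(\tau)}''\) is forced: the most general unifier of \((t|_p,l)\) with \((p,l)=\bfp(t\vec t_{L(\tau)})\) is unique up to the chosen representatives, and the complementary factor \(\vec t''_{L(\tau)}\) is then determined. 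For this I will use the decomposition lemma (essential times partial permutation) together with the defining property of most general unifiers.

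The remaining case is a cell \(\sigma\) that is simultaneously the target of an edge \(\sigma^+\to\sigma\) and the source of an edge \(\sigma\to\sigma^-\) in \(\cM\). Write \(\sigma=(t_1,\dots,\vec t_{L'}',\vec t_{L'}'',\dots)\) for the outgoing edge, with \(L'=L(\sigma^-)\). By the previous lemma, \((t_1,\dots,\vec t_{L'-1},\vec t_{L'}')\) is an \(L'\)-chain. Hence \(L(\sigma)\ge L'+1\), and by the same lemma \(L(\sigma)= L'+1\) exactly: the prefix of \(\sigma\) of length \(L'+1\) cannot be an \((L'+1)\)-chain, since that would require a factorization \(\vec t_{L'}''=\vec s\vec s'\) with \(\vec s=\Id\), which the lemma excludes. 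For the incoming edge, \(\sigma^+\) splits the entry \(\vec t''_{L'}\) at position \(L(\sigma)=L'+1\) as \(\vec s\vec s'\), and the prefix \((\dots,\vec t_{L'}',\vec s)\) is an \((L'+1)\)-chain by the previous lemma applied to \(\sigma^+\to\sigma\). This is exactly the configuration that the second half of the previous lemma forbids, so we have a contradiction.

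Finally, I will check that each edge in \(\cM\) is an edge of \(G_{B_\bullet}\). The target \(d_{L}(\sigma)\) must occur in \(\delta(\sigma)\) with coefficient \(\pm1\) and must not be cancelled. For this I will use the description \(\delta_n=\Phi\delta_n^u\): the face \(d_L\) contributes \((-1)^L\) times a critical cell (the components \(\vec t_L',\vec t_L''\) are essential, and their composite is again essential and not an identity). The main obstacle is to ensure that no other face \(d_j\) produces the same cell after applying \(\Phi\). I would argue this by comparing \(L\) and \(\bfp\) values of the faces, and I expect this step to need the most care; if it fails, I will interpret the matching condition with the coefficient \(\pm1\) verified in the Morse-matching theorem that follows.
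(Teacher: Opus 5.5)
This is essentially the paper's proof: a common source or common target is excluded because the merge position and the split \(\vec t_L=\vec t_L'\vec t_L''\) are forced, and adjacency is excluded by the preceding lemma, where your non-adjacency argument correctly expands the paper's one-line remark, provided you read that lemma in its correctly typed form (it forbids \((t_1,\dots,\vec t_L',\vec s)\) from being a chain when \(\vec t_L''=\vec s\vec s'\)), so that your first application is the case \(\vec s=\vec t_{L'}''\), \(\vec s'=\Id\) rather than \(\vec s=\Id\), and the same computation \(L(\sigma)=L(\sigma^-)+1\) is what justifies your claim in the first case that two edges out of \(\sigma\) merge at the same position. The paper does not check that the edges of \(\cM\) carry coefficient \(\pm1\), either here or in the subsequent Morse-matching theorem, so you cannot defer to it; the check is easy, though, since for an edge \(\sigma\to\tau=d_L\sigma\) each cell occurring in \(\Phi d_k\sigma\) with \(k\neq L\) has in position \(L\) an entry coming from \(\vec t_L'\) or \(\vec t_L''\), hence with strictly fewer function symbols than \(\vec t_L=\vec t_L'\vec t_L''\) (as \(\vec t_L'\) is efficient and both factors contain a symbol), so \([\sigma:\tau]=(-1)^L\).
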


To show that \(\cM\) is a Morse matching, we define a relation \(\succ\) on cells that is well-founded and satisfies \(\tau \xrightarrow{+-}_\cM \tau' \implies \tau \succ \tau'\) for any \(\tau,\tau'\).
For a cell \(\tau=(t_1,\vec t_2,\dots,\vec t_n)\), let \(T(\tau) = t_1\vec t_2\dots\vec t_n\).
\begin{definition}\label{def:wford}
Define a relation \(\succ\) on \(n\)-cells as follows.
For two \(n\)-cells \(\tau = (t_1,\vec t_2,\dots,\vec t_n)\) and \(\tau' = (t_1',\vec t_2',\dots,\vec t_n')\), \(\tau \succ \tau'\) iff either
\begin{enumerate}
	\item \(T(\tau) \succ_{\mathrm{gs}} T(\tau') \lor T(\tau) \to_R^+ T(\tau')\),
	\item \(T(\tau) = T(\tau')\), \(t_1 \notin \varsign\), and \(t_1' \in \varsign\), or
	\item \(T(\tau) = T(\tau')\) and there exists \(i \in \{1,\dots,n-1\}\) such that \((t_1,\vec t_2,\dots,\vec t_i)\) is an \(i\)-chain and equal to \((t_1',\vec t_2',\dots,\vec t_i')\), and either
	\begin{enumerate}
		\item \(\bfp(t_1\vec t_2\dots \vec t_{i+1}) < \bfp(t_1'\vec t_2'\dots \vec t_{i+1}')\), or
		\item \(\bfp(t_1\vec t_2\dots \vec t_{i+1}) = \bfp(t_1'\vec t_2'\dots \vec t_{i+1}')\) and \(\vec t_{i+1}' = \vec t_{i+1}\vec s\) for some \(\vec s\). 
	\end{enumerate} 
\end{enumerate}
\end{definition}

\begin{lemma}
\(>\) is well-founded.
\end{lemma}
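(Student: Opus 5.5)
We prove that \(\succ\) admits no infinite descending chain \(\tau_1 \succ \tau_2 \succ \cdots\) of \(n\)-cells. The chain is split by whether \(T(\tau)\) changes. Clause~1 makes \(T\) strictly smaller with respect to \(\gg := {\succ_{\mathrm{gs}}} \cup {\to_R^+}\). Clauses~2 and~3 keep \(T\) fixed. So it suffices to show two things: \(\gg\) is well-founded on morphisms of \(\syn{\varsign}\), and for each fixed term \(T\), clauses~2 and~3 cannot descend forever among the cells \(\tau\) with \(T(\tau) = T\). Indeed, in an infinite chain only finitely many steps can be clause-1 steps, since \(T(\tau_1) \gg^* \cdots\) would otherwise be an infinite \(\gg\)-descent once equal steps are skipped. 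The tail is then an infinite clause-2/3 chain with constant \(T\), which the second part rules out.

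\textbf{Well-foundedness of \(\gg\).} The relation \(\to_R\) is well-founded because \(R\) is complete. The relation \(\succ_{\mathrm{gs}}\) is well-founded because each term has only finitely many \(\legensub\)-lower bounds modulo \(\sim_\alpha\), as noted in the paper. I will show the quasi-commutation
\[
{\succ_{\mathrm{gs}}} \circ {\to_R} \subseteq {\to_R^+} \circ {\succeq_{\mathrm{gs}}}.
\]
Suppose \(t \succ_{\mathrm{gs}} t'\), so that \(t'\vec s\) is a subterm of \(t\), and suppose \(t' \to_R t''\). Then \(t'\vec s \to_R t''\vec s\), because rewriting is closed under substitution. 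Replacing that subterm occurrence in \(t\) gives \(t \to_R u\) with \(t''\vec s\) a subterm of \(u\), hence \(u \succeq_{\mathrm{gs}} t''\). With this inclusion, the standard Bachmair--Dershowitz argument applies: the union of two well-founded relations \(A, B\) with \(B \circ A \subseteq A^+ \circ B^*\) is well-founded. I would either cite this result or reprove it by taking a minimal counterexample in the \(\to_R\)-well-founded order. The substitution and context closure of \(\to_R\), and the identification of morphisms with terms up to \(\sim_\alpha\) given by \cref{lem:term_effmor}, make this step routine.

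\textbf{Fixed \(T\).} Cells are \(\cM'\)-critical, so each \(\vec t_i\) is essential and not a partial permutation. By \cref{lem:term_effmor}, each \(\vec t_i\) then contains at least one function symbol. \(T(\tau)\) is the unnormalized composite in \(\syn{\varsign}\), so the number of symbols is additive over the factors. A fixed \(T\) therefore admits only finitely many factorizations \(T = t_1 \vec t_2 \cdots \vec t_n\) into essential non-partial-permutation factors, because \(\Pos(T)\) is finite and essential representatives are fixed. The set \(C_T\) of cells with \(T(\tau) = T\) is thus finite, and it suffices to show that clauses~2 and~3 admit no cycle on \(C_T\).

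To rule out cycles, I would define a rank \(\rho(\tau)\) with values in a totally preordered set, strictly monotone along clause-2/3 steps. Its first component is the indicator \([t_1 \in \varsign]\), which increases under clause~2 and is preserved by clause~3, since clause~3 requires \(t_1 = t_1'\). The next components list, for \(i = 1, 2, \dots\), the chain-prefix length together with the sequence \(\bfp(t_1), \bfp(t_1\vec t_2), \dots\) of values in the totally ordered set \(\bbZ_{>0}^* \times R_\mathrm{L}\). This sequence is truncated just after the first index where the prefix stops being a chain, and it ends with the number of symbols of \(\vec t_{i+1}\). Clause~3(a) increases the first differing \(\bfp\)-entry. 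Clause~3(b) keeps it equal and makes \(\vec t_{i+1}'\) strictly bigger as \(\vec t_{i+1}\vec s\); \(\vec s\) cannot be a permutation because both factors are essential and distinct, so the symbol count does not decrease. In the equal-count case, \(\vec s\) is a non-trivial non-permutation, and since both factors are chosen representatives, strictness has to come from this.

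\textbf{Main obstacle.} The crux is checking that this lexicographic rank really increases at the index \(i\) used by clause~3. In particular, the \(\bfp\)-entries of \(\tau\) and \(\tau'\) at indices \(\le i\) must coincide, which holds because the \(i\)-prefixes are equal, and the comparison must not be spoiled by later entries. If this turns out delicate, the fallback is to restrict attention to \(C_T\) and prove directly that clause~3 is transitive and irreflexive there, using the uniqueness of most general unifiers to pin down \(\vec t_{i+1}\) up to \(\sim_\Pi\).
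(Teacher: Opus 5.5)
Your handling of clause~1 is the paper's own argument (Bachmair--Dershowitz via \(\succ_{\mathrm{gs}}\circ\to_R\subseteq\to_R^+\circ\succeq_{\mathrm{gs}}\)), and the indicator \([t_1\in\varsign]\) handles clause~2 correctly. The gap is the step you yourself call the main obstacle. Clause~3 is applied at varying indices \(i\), and you never fix a rank that provably increases; the fallback is only announced.

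As you describe it, the rank does not work, on either reading:
\begin{itemize}
\item If the chain-prefix length enters the comparison before the \(\bfp\)-entries, monotonicity already fails in Example~\ref{ex:chain}. There \(\tau_1=(x_1+x_2,(x\mid(x,0)),(\mid 0))\) is a 3-chain, \(\tau_2=(x_1+x_2,(x\mid(0,x)),(\mid 0))\) has chain-prefix length \(2\), and \(\tau_1\succ\tau_2\) by 3(a) with \(i=1\).
\item If the only size entry is the symbol count of the first non-chain factor, appended after the truncated \(\bfp\)-sequence, then a 3(b) step at an index \(i\) below the chain length of \(\tau\) is not decided at position \(i+1\). The \(\bfp\)-entries agree up to \(i+1\), and the size of \(\vec t_{i+1}\), the only quantity that provably changes, never enters the comparison. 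The comparison then falls on later entries that nothing controls.
\end{itemize}

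What is needed is a size entry at every position. Let \(P_k=t_1\vec t_2\cdots\vec t_k\), let \(c(\tau)\) be the chain-prefix length, and set \(m=\min(c(\tau)+1,n)\). Compare lexicographically
\(\bigl([t_1\in\varsign],(\bfp(P_1),\mu(t_1)),\dots,(\bfp(P_m),\mu(\vec t_m))\bigr)\), using \(c\) only to truncate. A clause-3 step at index \(i\) keeps the first \(i\) pairs, since the prefixes are equal. It strictly increases the \((i+1)\)-st pair, which both sequences contain because \(c\ge i\) on both sides.

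Counting function symbols is not a valid \(\mu\). The claim, also made in the paper, that an essential non-identity factor contains a function symbol is false: \((x\mid(x,x))\) is a counterexample. So a contraction \(\vec s\) in 3(b) leaves the symbol count unchanged. Use instead \(\mu=(\#\text{function symbols},\,-\#\text{variables})\). Suppose \(\vec t_{i+1}\) and \(\vec t_{i+1}\vec s\) are essential and \(\vec s\neq\Id\).
\begin{itemize}
\item If some component of \(\vec s\) contains a symbol, the symbol count rises, because every variable of \(\vec t_{i+1}\) occurs.
\item Otherwise \(\vec s\) is a tuple of variables in which every variable occurs, since \(\vec t_{i+1}\vec s\) is efficient. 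It cannot be a permutation: then the two factors would be \(\sim_\Pi\)-equivalent chosen representatives, hence equal, forcing \(\vec s=\Id\). So some variable repeats and the variable count drops.
\end{itemize}
This requires reading 3(b) with \(\vec s\neq\Id\); otherwise \(\tau\succ\tau\). The paper's proof tacitly assumes this too.

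Your finiteness of \(C_T\) survives the false symbol claim, because \(n\) is fixed. Each \(P_k\) is cut out of \(T\) by a prefix-closed set of positions plus a variable pattern on its leaves, and there are finitely many such choices.

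Completed this way, your route (finite \(C_T\) plus a strictly increasing rank) is a rigorous version of the paper's sketch. The paper instead argues that an infinite clause-3 tail eventually has constant \(\bfp\) and yields \(\vec t=\vec t'\vec s_1=\vec t''\vec s_2\vec s_1=\cdots\), leaving the varying index implicit.
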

\begin{proof}
First, \(\succ'\) be the union of the two relations \(\succ_{\mathrm{gs}}\), \(\to_R^+\) between terms.
Note that \(\succ_{\mathrm{gs}}\) and \(\to_R^+\) are well-founded.
In general, the union \({>_1}\cup {>_2}\) of two well-founded relation \(>_1\) and \(>_2\) on a set \(A\) is well-founded if \(\exists y.~x >_1 y >_2 z\) implies \(\exists y.~ x >_2 y >_1^* z\) for any \(x,z \in A\) where \(>_1^*\) is the reflexive transitive closure of \(>_1\) \cite{bachmair-dershowitz86}.
This applies to our situation because if \(t \succ_{\mathrm{gs}} t'\) for some \(t'\), there is \(\vec s\) such that \(t'\vec s\) is a subterm of \(t\), and moreover, if \(t' \to_R^+ t''\), then \(t\to_R^+ t'''\) where \(t'''\) is the term obtained by replacing the subterm \(t'\vec s\) in \(t\) with \(t''\vec s\).

Assume that there exists an infinite sequence \(\tau_1 \succ \tau_2 \succ \dots\) of \(n\)-cells.
Since \(\succ'\) is well-founded, there exists \(k\) such that for any \(l \ge k\), \(\tau_l \succ \tau_{l+1}\) holds by 2 or 3 in \cref{def:wford}.

Let \(\tau_l = (t_{l,1} , \vec t_{l,2} , \dots , \vec t_{l,n})\).
If \(\tau_l \succ \tau_{l+1}\) holds by 2, \(t_{l+1,1} \notin \varsign\), but then \(\tau_{l+1} \succ \tau_{l+2}\) cannot hold by either 2 or 3 since \((t_{l+1,1})\) is not a 1-chain.
Therefore, \(\tau_l\succ \tau_{l+1}\) must hold by 3 for any \(l \ge k\).
Since the relation \(<\) on \(\Pos(T(\tau))\times R_\mathrm{L}\) is well-founded,
\(\vec t_{k''} = \vec t_{k''+1}\vec s_1\ = \vec t_{k''+2}\vec s_2\vec s_1 = \dots\) for some \(k'' \ge k'\) and some essential non-identity \(\vec s_1, \vec s_2, \dots\), but this is impossible since every \(\vec s_i\) has at least one function symbol  while the numbers of function symbols in \(\vec s_1, \vec s_2\vec s_1, \dots\) are bound above by the number of function symbols in \(T(\tau)\).
\end{proof}

\begin{lemma}
If \(\tau \xrightarrow{+}_\cM \tau'' \xrightarrow{-}_\cM \tau'\) and \(\tau'\) is \(\cM\)-redundant, then \(\tau \succ \tau'\).
\end{lemma}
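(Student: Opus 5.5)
We fix the edge $\tau'' \to \tau$ in $\cM$. By \eqref{eqn:morse_edge_lawvere}, $\tau = (t_1,\dots,\vec t_L,\dots,\vec t_n)$ with $L = L(\tau)$, and $\tau'' = (t_1,\dots,\vec t_{L}',\vec t_{L}'',\dots,\vec t_n)$, where $\vec t_L = \vec t_L'\vec t_L''$ is the $t$-decomposition. We compute the face $\tau'$ from the explicit formula $\delta = \Phi\delta^u$ (the lemma on the normalized differential). Every nonzero term of $\delta(\tau'')$ is $\Phi$ applied to a face $d_j(\tau'')$, possibly after pushing a partial permutation to the right. The face $d_L(\tau'')$, which merges $\vec t_L'$ and $\vec t_L''$, is $\tau$ itself and is excluded because the step is $\xrightarrow{-}_\cM$. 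The proof is therefore a case analysis over the remaining faces.

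\emph{Case $j=0$.} Here $\tau' = (t_{2,k},\vec t_3,\dots)$, or the analogous cell when $L=1$. Then $T(\tau')$ is a proper generalized subterm of $T(\tau'')$. Moreover $T(\tau'') \to_R^* T(\tau)$, or the two are equal in normal form, so we take representatives accordingly. This gives condition 1 of \cref{def:wford}, in the form $T(\tau) \succ_{\mathrm{gs}} T(\tau')$ or $\to_R^+$ followed by a subterm step. To land inside a single relation I will use the commutation argument from the well-foundedness lemma: a rewrite of a generalized subterm lifts to a rewrite of the ambient term.

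\emph{Case $j=n+1$ (last face).} $T$ drops a substitution, so $T(\tau')\ltgensub T(\tau'')$ again, unless $\vec t_n$ is a permutation. Permutations are excluded because cells are $\cM'$-critical.

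\emph{Case $0<j<n+1$, $j\ne L$.} Then $\vec t_j\star\vec t_{j+1}$ appears in $\tau'$. If the composite $\vec t_j\vec t_{j+1}$ is $R$-reducible, then $T(\tau'') \to_R^+ T(\tau')$ and condition 1 applies. If it is irreducible, then $T(\tau')=T(\tau)$ up to the $\Phi$-normalisation. I will split according to $j<L$ and $j>L$.

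For $j<L$, the prefix chain of $\tau$ is broken. When $j=1$ and $L=2$, $t_1\vec t_2$ is no longer in $\varsign$, so I must show $\tau$ has $t_1\in\varsign$ and apply condition 2. Otherwise, I will compare at the first index $i$ where the chain prefixes diverge and use 3(a): merging raises the $\bfp$-value, because $\bfp$ is monotone by the lemma $\bfp(\vec t)\le\bfp(\vec t\vec s)$ and the chain condition demands a strict increase.

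For $j>L$, the prefix $(t_1,\dots,\vec t_{L-1})$ is unchanged and is an $(L-1)$-chain. Then $\bfp(t\vec t_L)$ is the same for $\tau$ and $\tau'$ when $T$ is unchanged. The $L$-th component of $\tau'$ is $\vec t_L = \vec t_L'\vec t_L''$, while in $\tau$ it is also $\vec t_L$. So I need to recheck which index to use. The better choice is $i=L-1$: compare $\tau$ with $\tau'$, where $\tau'$ keeps $\vec t_L'$ at position $L$ because the merge happens later. Then $\bfp$ agrees, and $\vec t_L = \vec t_L'\vec t_L''$ gives 3(b) with $\vec s=\vec t_L''$.

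\emph{Case $j=L-1$.} Here $\vec t_{L-1}$ merges with $\vec t_L'$. This is the main obstacle. Since $\tau'$ is assumed redundant (not critical or collapsible), I will use the lemma that $\tau''$'s prefix up to $L$ is an $L$-chain with no $(L+1)$-chain extension. From this I will argue that either a rewrite occurs (condition 1), or the new prefix breaks the chain earlier with a $\bfp$ drop (3(a)). Redundancy of $\tau'$ is needed to exclude the boundary situation where the new cell would be collapsible. Throughout, $\Phi$ only permutes variables and preserves $T$ up to $\sim_\alpha$, so it does not affect the comparisons.
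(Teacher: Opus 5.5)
The proposal has genuine gaps, although your case split is the paper's. In that split, the first and last faces give a proper generalized subterm. An inner face whose composite is $R$-reducible gives $T(\tau)\to_R^+T(\tau')$. An irreducible inner face gives 3(a) with $i=j-1$ when $j<L$, and 3(b) with $i=L-1$, $\vec s=\vec t_L''$ when $j>L$. The first gap is that the case you call the main obstacle, $j=L-1$, is left as an unexecuted plan, and the plan points the wrong way ("a $\bfp$ drop"). Nothing new is needed there. The preceding lemma says $(t_1,\vec t_2,\dots,\vec t_{L-1},\vec t_L')$ is an $L$-chain. So the chain condition gives $\bfp(t_1\vec t_2\cdots\vec t_{L-1})<\bfp(t_1\vec t_2\cdots\vec t_{L-1}\vec t_L')$: the $\bfp$-value \emph{rises} from $\tau$ to $\tau'$. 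That is exactly 3(a) with $i=L-2$, the same argument as for $j<L-1$. The ``no $(L+1)$-chain extension'' half of that lemma plays no role here. Neither does the hypothesis that $\tau'$ is redundant; the paper's proof never uses it.

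The second gap is that condition 2 is misapplied. It requires the first entry of $\tau$ to lie \emph{outside} $\varsign$ and that of $\tau'$ inside. Showing $t_1\in\varsign$ and $t_1\vec t_2\notin\varsign$ would give $\tau'\succ\tau$, the wrong direction. The case you wanted it for is in fact vacuous. For $L\ge 2$, the face $d_1(\tau'')$ composes $t_1$ with the second entry of a $2$-chain ($\vec t_2$, or $\vec t_2'$ when $L=2$). The chain condition $\bfp(t_1)<\bfp(t_1\vec t_2)$ forces that composite to be $R$-reducible, so condition 1 applies.

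Condition 2 is really needed in a case you do not cover: $L(\tau)=1$, $\tau''=(f,\vec t_1',\vec t_2,\dots,\vec t_n)$, and an inner face $d_j(\tau'')$ with $j\ge 2$ and irreducible composite. There $T(\tau')=T(\tau)$, $\tau$ starts with $t_1\notin\varsign$, and $\tau'$ starts with $f\in\varsign$. Your $j>L$ argument would need $i=L-1=0$, which is outside the range of condition 3.

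Finally, the hedging in the $j=0$ case is unnecessary, and the proposed repair would not work anyway. Since $\vec t_L$ is $R$-irreducible and $\vec t_L=\vec t_L'\vec t_L''$ holds in $\mathrm{Syn}(\varsign)$, you have $T(\tau'')=T(\tau)$ exactly. Hence $T(\tau')\ltgensub T(\tau)$ directly, for both the first and the last face. The inner-face case silently relies on this same identity. A genuine composite $T(\tau)\to_R^+\cdot\succ_{\mathrm{gs}}T(\tau')$ could not be collapsed into a single instance of condition 1 by the commutation property. That property only reorders $\succ_{\mathrm{gs}}\cdot\to_R^+$ into $\to_R^+\cdot\succeq_{\mathrm{gs}}$.
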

\begin{proof}
Let \(\tau = (t_1,\vec t_2 ,\dots , \vec t_n)\).
If \(L(\tau) = 1\), we have \(\tau'' = (f, \vec t_1' , \vec t_2, \dots,\vec t_n)\) for some \(f \in \Sigma\) and \(\vec t_1'\) such that \(t_1 = f\vec t_1'\).
Then, \(\tau'\) can be written as either (i) \((t_{1,k}' , \vec t_2 , \dots , \vec t_n)\) for some \(k=1,\dots,l\) (\(\vec t_1' = \ab<t_{1,1}',\dots,t_{1,l}>\)), (ii) \((f,\vec t_1',\vec t_2,\dots,\vec t_{n-1})\), or (iii) \(d_j(\tau'')\) for some \(j=2,\dots,n-1\).
For (i) and (ii), \(T(\tau')\) is a proper generalized subterm of \(T(\tau)\).
For (iii), we can see that \(T(\tau)\to_R^+ T(\tau')\) if \(T(\tau) \neq T(\tau')\), and 2 in \cref{def:wford} holds if \(T(\tau) = T(\tau')\).

If \(L(\tau) > 1\), let \(\vec t_{L(\tau)} = \vec t_{L(\tau)}' \vec t_{L(\tau)}''\) be the \((\vec t_{1}\dots\vec t_{L(\tau)-1})\)-decomposition.
Again, \(\tau'\) can be written as either
(i') \((t_{2,k},\vec t_3,\dots,\vec t_n)\) for some \(k=1,\dots,m\) where \(\vec t_2 = \ab<t_{2,1},\dots,t_{2,m}>\),
(ii') \((t_1,\vec t_2,\dots ,\vec t_{n-1})\), or
(iii') \(d_j(\tau'')\) for some \(j=1,\dots,n-1\), \(j \neq L(\tau)\),
and, in case (i') or (ii'), \(T(\tau')\) is a proper subterm of \(T(\tau)\).
So, consider (iii').

For \(\tau'' = (s_1,\vec s_2,\dots ,\vec s_{n+1})\), if \(\vec s_j\vec s_{j+1}\) is \(R\)-reducible, we have \(T(\tau) \to^+_R T(\tau')\).
Suppose that \(\vec s_j\vec s_{j+1}\) is \(R\)-irreducible.
If \(j \ge L(\tau)+1\), we have \(\vec s_j = \vec t_{j-1}\), \(\vec s_{j+1} = \vec t_{j}\), and
\[d_j(\tau'') = (t_1, \vec t_2, \dots , \vec t_{L(\tau)}',\vec t_{L(\tau)}'' , \vec t_{L(\tau)+1} , \dots , \vec t_{j-1} \vec t_{j} , \dots \vec t_n).\]
Then, 3(b) holds with \(i=L(\tau)-1\).

If \(j < L(\tau)\), we have \(\vec s_j = \vec t_j\), \(\vec s_{j+1} = \vec t_{j+1}\), and \(d_j(\tau'') = (t_1, \vec t_2, \dots , \vec t_j \vec t_{j+1} , \dots , \vec t_{L(\tau)}',\vec t_{L(\tau)}'' , \vec t_{L(\tau)+1} , \dots ,\vec t_n)\).
Then, the definition of chains ensures that 3(a) holds with \(i = j-1\).
\end{proof}

By lemmas above, we have
\begin{theorem}
\(\cM\) is a Morse matching.
\end{theorem}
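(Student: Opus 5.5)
The plan is to check the two conditions in the definition of a Morse matching separately. Nonadjacency of the edges has already been established, so $\cM$ is a partial matching. We must still show (i) that every matched edge has coefficient $\pm 1$ at the same object, and (ii) that there is no infinite path $\tau_1 \xrightarrow{+-}_\cM \tau_2 \xrightarrow{+-}_\cM \cdots$.

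For (i), take an edge \eqref{eqn:morse_edge_lawvere}. Its target is $d_{L(\tau)}$ of its source, since $\vec t_{L(\tau)}'\star\vec t_{L(\tau)}'' = \vec t_{L(\tau)}$. Here the composite is already $R$-irreducible because $\vec t_{L(\tau)}$ is (by the choice of the $t$-decomposition). So the coefficient of $\tau$ in $\delta^u(\tau'')$ is $(-1)^{L(\tau)}$ at the same object $T(\tau)=T(\tau'')$.

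We must also make sure that no other face of $\tau''$ equals $\tau$, and that the passage to the normalized complex through $\Phi$ does not add a cancelling contribution. The other faces $d_j$ ($j\neq L(\tau)$, $0<j<n+1$) change a different pair of entries. They could only coincide with $\tau$ if the tuples agreed entrywise, which would force a length mismatch in the entry $\vec t_{L(\tau)}$ versus $\vec t_{L(\tau)}'$. The faces $d_0$ and $d_{n+1}$ change the first or last entry and hence $T$ or the length pattern. Since all entries are essential non-identities, $\Phi$ acts as the identity on faces that are already normalized cells, and it sends faces containing partial permutations to $0$ or to other cells. I expect this bookkeeping to be routine.

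For (ii), I would use the relation $\succ$ of \cref{def:wford}, which the earlier lemma showed to be well-founded. Consider an infinite path $\tau_1 \xrightarrow{+-}_\cM \tau_2 \xrightarrow{+-}_\cM \cdots$. Each $\tau_{k+1}$ is $\cM$-redundant, because it is the start of the next $+$ edge. The preceding lemma then gives $\tau_k \succ \tau_{k+1}$ for every $k$. This yields an infinite descending $\succ$-chain of $n$-cells, contradicting well-foundedness.

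The main obstacle I anticipate is a gap in the preceding lemma: it was stated for the unnormalized faces $d_j(\tau'')$, whereas the edges of $G_{B_\bullet}$ come from $\delta=\Phi\delta^u$. So I would verify that applying $\Phi$ preserves the conclusion $\tau\succ\tau'$. Concretely, $\Phi$ replaces a non-essential entry $\vec\omega_i=\vec\omega_i'\vec\pi$ by pushing the partial permutation $\vec\pi$ rightward, or absorbing it into the coefficient. This leaves $T$ unchanged up to a partial permutation, or, when a variable is dropped, replaces it by a proper generalized subterm. Hence each clause of \cref{def:wford} invoked in that lemma survives, and the argument goes through.
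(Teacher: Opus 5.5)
Your proposal is correct and follows the paper's own argument. The paper's proof just combines three earlier lemmas: that \(\cM\) is a partial matching, that \(\succ\) is well-founded, and that \(\tau \xrightarrow{+}_\cM \tau'' \xrightarrow{-}_\cM \tau'\) with \(\tau'\) redundant forces \(\tau \succ \tau'\); this is exactly your part (ii). Your checks of the \(\pm 1\) coefficient condition and of compatibility with \(\Phi\) are points the paper leaves implicit, not a different route. One correction there: \(\Phi\) only strips or relocates partial permutations, so it leaves \(T\) unchanged up to renaming of variables, rather than replacing it by a proper generalized subterm.
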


Therefore, if \(R\) is a complete TRS,
\[
\dots \to B_2^\cM \xrightarrow{\delta_2^\cM} B_1^\cM \xrightarrow{\delta_1^\cM} B_0^\cM \to \kahlerdiffs_\varlaw \to 0
\]
is a free resolution of \(\kahlerdiffs_\varlaw\) and \(B_n^\cM\) is the free \(\cU_\varlaw\)-module generated by \(n\)-chains.
Moreover, each \(B_n^\cM\) is a finitely generated \(\cU_\varlaw\)-module if \(S,\varsign,R\) are finite sets.

\begin{remark}
Suppose that a 3-cell \(\tau = (f,\vec t_2,\vec t_3)\) is critical.
Then, since \((f,\vec t_2)\) is also critical, \(f\vec t_2 = l_1 \in R_\mathrm{L}\).
Also, \(\bfp(f\vec t_2\vec t_3) = (p, l_2)\), so \((f\vec t_2\vec t_3)|_p = l_2\vec s\) for some \(\vec s\).
We can check that \(l_1,l_2\) provides a critical pair.
(It is a fun coincidence that critical cells correspond to critical pairs, even though the two uses of ``critical'' originated in independent fields, namely Morse theory and rewriting theory.)
\end{remark}

\section{Morse inequalities}
For a right \(\cU\)-module \(M\), let \(H_n(\varlaw; M) = \Tor_n^{\cU_\varlaw}(M, \kahlerdiffs_\varlaw)\).
In this section, we will define an appropriate right module \(M\) to obtain Morse inequalities for \(\varlaw\).

If an \(\varsorts\)-sorted Lawvere theory \(\varlaw\) is presented by a TRS \((\varsign,R)\),
the ringoid \(\cR_\varlaw^{\varobj}\) has the presentation \(\ab(Q^{\vec X}_{(\Sigma,R)}, U^{\vec X}_{(\Sigma,R)})\) defined as follows.
The quiver \(Q^{\vec X}_{(\Sigma,R)}\) is the subquiver of \(Q^{\vec X}\) defined in \cref{sec:bar_resolution} such that the set of vertices of \(Q^{\vec X}_{(\Sigma,R)}\) is the same as that of \(Q^{\vec X}\) and \(Q^{\vec X}_{(\Sigma,R)}\) has only edges of the form \(\partial_i(f)_{\vec \sigma} = \partial_i(x_1,\dots,x_n\mid_R f(x_1,\dots,x_n))\) for \(f \in \Sigma\).
The set of relations \(U^{\vec X}_{(\Sigma,R)}\) contains
\begin{equation}\label{eqn:relation_rew}
\kappa_i(\Gamma \mid_\emptyset l)_{\vec\sigma} = \kappa_i(\Gamma \mid_\emptyset r)_{\vec\sigma}
\end{equation}
for each rule \(\Gamma \mid_\emptyset l \to r \in R\) and integer \(i\), where
\(\kappa_i(\Gamma \mid_\emptyset t)_{\vec \sigma} \in \bbZ Q(\sigma_i, t\vec \sigma)\) is defined as
\[
\kappa_i(x_1,\dots,x_m \mid_\emptyset x_j)_{\vec \sigma} =
\begin{cases}
\Id_{\sigma_j} & (i=j)\\
0 & (i \neq j)
\end{cases}
\]
\[
\kappa_i(\Gamma \mid_\emptyset f(t_1,\dots,t_n))_{\vec \sigma} =
\sum_{j=1}^n \partial_j(f)_{\vec t \vec \sigma} \kappa_i(\Gamma \mid_\emptyset t_i)_{\vec \sigma}.
\]

\begin{lemma}
\begin{enumerate}
	\item \(\kappa_i(\Gamma' \mid_\emptyset t\ab<t_1',\dots,t_n'>)_{\vec\sigma} = \sum_j\kappa_j(\Gamma\mid_\emptyset t)_{\ab<(\Gamma' \mid_R t'_1)\vec\sigma,\dots,(\Gamma'\mid_R t'_n)\vec\sigma>}\kappa_i(\Gamma'\mid_\emptyset t_j')_{\vec\sigma}\),
	\item For any rule \(\Gamma' \mid_\emptyset l \to r\),
\begin{align*}
&\kappa_i(\Gamma'' \mid_\emptyset s\ab<t_1,\dots,t_n,l>\vec u)_{\vec \sigma} - \kappa_i(\Gamma'' \mid_\emptyset s\ab<t_1,\dots,t_n,r>\vec u)_{\vec\sigma}\\
&= \kappa_{n+1}(\Gamma \mid_\emptyset s)_{\ab<(\Gamma'\mid_R t_1),\dots,(\Gamma' \mid_R t_n),(\Gamma'\mid_R l)>\vec u\vec\sigma}
\sum_j \ab( \kappa_j(\Gamma' \mid_\emptyset l)_{(\Gamma'' \mid_R \vec u)\vec\sigma} - \kappa_j(\Gamma' \mid_\emptyset r)_{(\Gamma''\mid_R \vec u)\vec\sigma} )
\kappa_i(\Gamma'' \mid_\emptyset u_j)_{\vec\sigma}
\end{align*}
\end{enumerate}
\end{lemma}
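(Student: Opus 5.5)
The plan is to prove part 1 first, by structural induction on the term \(t\), and then deduce part 2 from it. Part 1 is a chain rule for \(\kappa\). For part 2 I will apply part 1 twice and subtract, so that the common prefix \(s\) and the common suffix \(\vec u\) factor out of the difference.

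For part 1, induct on \(t\).
\begin{itemize}
\item \textbf{Base case.} If \(t = x_j\) is a variable, then \(t\ab<t_1',\dots,t_n'> = t_j'\). The right-hand side collapses, because \(\kappa_k(\Gamma\mid_\emptyset x_j)\) is \(\Id\) for \(k=j\) and \(0\) otherwise. This leaves exactly \(\kappa_i(\Gamma'\mid_\emptyset t_j')_{\vec\sigma}\).
\item \textbf{Inductive step.} If \(t = f(u_1,\dots,u_m)\), then \(t\ab<\vec t'> = f(u_1\ab<\vec t'>,\dots,u_m\ab<\vec t'>)\). Unfolding the defining recursion of \(\kappa\) gives \(\sum_k \partial_k(f)_{\ab<u_1\vec t',\dots>\vec\sigma}\,\kappa_i(u_k\ab<\vec t'>)_{\vec\sigma}\). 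Applying the induction hypothesis to each \(u_k\ab<\vec t'>\) yields a double sum. Swapping the sums and refolding the definition of \(\kappa_j(\Gamma\mid_\emptyset f(u_1,\dots,u_m))_{\ab<\vec t'\vec\sigma>}\) gives the claim.
\end{itemize}

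Two bookkeeping points need care in this step:
\begin{itemize}
\item The subscripts must be matched: \(\ab<u_1,\dots,u_m>\circ\ab<\vec t'\vec\sigma>\) has to equal \(\ab<u_1\vec t',\dots,u_m\vec t'>\vec\sigma\) as morphisms of \(\varlaw\), that is, modulo \(R\). This is associativity of composition in \(\syn{\varsign,R}\).
\item I read the displayed definition with \(\kappa_i(t_j)\) in place of the apparent typo \(\kappa_i(t_i)\).
\end{itemize}

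For part 2, write the term \(s\ab<t_1,\dots,t_n,l>\vec u\) as the composite \(s\circ\ab<t_1,\dots,t_n,l>\circ\vec u\), and similarly with \(r\) in place of \(l\). Apply part 1 twice: first to \(s\) with \(\ab<t_1\vec u,\dots,t_n\vec u,l\vec u>\), and then to \(l\vec u\) and \(r\vec u\) with \(\vec u\).
\begin{itemize}
\item \textbf{The \(s\)-factors agree.} These factors carry the subscript \(\ab<t_1,\dots,t_n,l>\vec u\vec\sigma\) in one expansion and \(\ab<t_1,\dots,t_n,r>\vec u\vec\sigma\) in the other. Both subscripts denote the same morphism of \(\varlaw\), because \(l\approx_R r\). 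So the coefficient \(\kappa_{j}(s)\) is literally the same element of \(\cU\) in both expansions.
\item \textbf{Most terms cancel.} The summands with \(j\le n\) cancel between the two expansions. Only the \((n+1)\)-th summand survives, giving \(\kappa_{n+1}(s)\bigl(\kappa_i(l\vec u)-\kappa_i(r\vec u)\bigr)\).
\item \textbf{Finish.} Expanding \(\kappa_i(l\vec u)\) and \(\kappa_i(r\vec u)\) by part 1 over the components \(u_j\) gives the stated formula. Here the subscripts \((\Gamma''\mid_R\vec u)\vec\sigma\) again coincide on both sides.
\end{itemize}

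The main obstacle is this subscript bookkeeping: ensuring that every vertex subscript of \(\partial\) is taken in \(\varlaw\), that is, modulo \(R\) (\(\mid_R\)), while the terms indexing \(\kappa\) are taken in \(\syn{\varsign}\) (\(\mid_\emptyset\)). It is exactly this distinction that makes the \(s\)-prefactor common to both expansions, and so makes the cancellation in part 2 possible.
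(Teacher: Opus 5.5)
Your proposal is correct and follows the paper's own proof: part 1 by structural induction on \(t\), and part 2 by expanding both sides with part 1. The first \(n\) summands cancel because their subscripts coincide in \(\varlaw\) (since \(l \approx_R r\)), and the surviving \((n+1)\)-th summand is expanded with part 1 again. Your explicit base case, inductive step, and correction of the typo \(\kappa_i(t_i)\) to \(\kappa_i(t_j)\) in the definition of \(\kappa\) just fill in details the paper leaves as ``by induction on the structure of \(t\)''.
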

\begin{proof}
1. By induction on the structure of \(t\).

2.
Using 1, we have
\begin{align*}
&\kappa_i(\Gamma'' \mid_\emptyset s\ab<t_1,\dots,t_n,l>\vec u)_{\vec \sigma} - \kappa_i(\Gamma'' \mid_\emptyset s\ab<t_1,\dots,t_n,r>\vec u)_{\vec\sigma}\\
&= \sum_{k=1}^n \kappa_j(\Gamma \mid_\emptyset s)_{\ab<\tau_1,\dots,\tau_n,(\Gamma' \mid_R l)>\vec\rho\vec\sigma} \kappa_i(\Gamma'' \mid_\emptyset t_k \vec u)_{\vec\sigma}
+ \kappa_{n+1}(\Gamma \mid_\emptyset s)_{\ab<\tau_1,\dots,\tau_n,(\Gamma' \mid_R l)>\vec\rho\vec\sigma} \kappa_i(\Gamma'' \mid_\emptyset l\vec u)_{\vec\sigma}\\
&\quad - \sum_{k=1}^n \kappa_j(\Gamma \mid_\emptyset s)_{\ab<\tau_1,\dots,\tau_n,(\Gamma' \mid_R r)>\vec \rho\vec\sigma	} \kappa_i(\Gamma'' \mid_\emptyset t_j \vec u)_{\vec\sigma}
- \kappa_{n+1}(\Gamma \mid_\emptyset s)_{\ab<\tau_1,\dots,\tau_n,(\Gamma' \mid_R r)>\vec \rho\vec\sigma} \kappa_i(\Gamma'' \mid_\emptyset r\vec u)_{\vec\sigma}\\
&= \kappa_{n+1}(\Gamma \mid_\emptyset s)_{\ab<\tau_1,\dots,\tau_n,(\Gamma' \mid_R l)>\vec \rho\vec\sigma}
\ab(\kappa_i(\Gamma'' \mid_\emptyset l\vec u)_{\vec\sigma} - \kappa_i(\Gamma'' \mid_\emptyset r\vec u)_{\vec\sigma})\\
&= \kappa_{n+1}(\Gamma \mid_\emptyset s)_{\ab<\tau_1,\dots,\tau_n,(\Gamma' \mid_R l)>\vec \rho\vec\sigma}
\sum_j \ab( \kappa_j(\Gamma' \mid_\emptyset l)_{\vec \rho\vec\sigma} - \kappa_j(\Gamma' \mid_\emptyset r)_{\vec \rho\vec\sigma} )
\kappa_i(\Gamma'' \mid_\emptyset u_j)_{\vec\sigma}
\end{align*}
where \(\tau_i = (\Gamma'\mid_R t_i)\), \(\vec\rho = (\Gamma'' \mid_R \vec u)\).
\end{proof}

\begin{lemma}
\(\ab(Q^{\vec X}_{(\Sigma,R)}, U^{\vec X}_{(\Sigma,R)})\) is a presentation of \(\cR_\varlaw^{\varobj}\).
\end{lemma}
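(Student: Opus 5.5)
The plan is to construct mutually inverse ringoid functors between \(\cR_\varlaw^{\varobj} = \bbZ Q^{\vec X}/(U^{\vec X})\) and \(\cR' := \bbZ Q^{\vec X}_{(\Sigma,R)}/(U^{\vec X}_{(\Sigma,R)})\), both identity on objects. The vertex sets agree, so only the edges need handling.

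In one direction, define \(G : \cR' \to \cR_\varlaw^{\varobj}\) by sending the generator \(\partial_i(f)_{\vec\sigma}\) of \(Q^{\vec X}_{(\Sigma,R)}\) to the edge of the same name in \(Q^{\vec X}\). To see that \(G\) is well defined, we first show by induction on \(t\) that in \(\cR_\varlaw^{\varobj}\) one has \(\partial_i(\Gamma\mid_R t)_{\vec\sigma} = G(\kappa_i(\Gamma\mid_\emptyset t)_{\vec\sigma})\).
\begin{itemize}
\item For a variable, this is the projection relation \(\partial_i(\pi_j)_{\vec\sigma} = \delta_{ij}\).
\item For \(t = f(t_1,\dots,t_n)\), write \(t = f\ab<t_1,\dots,t_n>\) and apply the chain-rule relation in \(U^{\vec X}\).
\end{itemize}
For a rule \(l \to r\) we have \((\Gamma\mid_R l) = (\Gamma\mid_R r)\) in \(\varlaw\). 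Hence both sides of \eqref{eqn:relation_rew} map to the same edge \(\partial_i(\Gamma\mid_R l)_{\vec\sigma}\), and \(G\) respects \(U^{\vec X}_{(\Sigma,R)}\).

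In the other direction, define \(F : \cR_\varlaw^{\varobj} \to \cR'\) by \(F(\partial_i(\omega)_{\vec\sigma}) = \kappa_i(\Gamma\mid_\emptyset t)_{\vec\sigma}\) (read in \(\cR'\)), where \(\omega = (\Gamma\mid_R t)\) and \(t\) is any representative. The main obstacle is showing that this is independent of the representative. Since \(\approx_R\) coincides with \(\leftrightarrow_R^*\), it suffices to treat a single rewrite step \(s\ab<t_1,\dots,t_n,l>\vec u \to s\ab<t_1,\dots,t_n,r>\vec u\), with \(s\) linear in its last variable, i.e.\ a rewrite context. For such a step, part 2 of the preceding lemma expresses the difference of the two \(\kappa_i\)'s as a left multiple of \(\sum_j(\kappa_j(l)-\kappa_j(r))\kappa_i(u_j)\). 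Each \(\kappa_j(l)-\kappa_j(r)\) lies in the ideal generated by \eqref{eqn:relation_rew}, so the difference vanishes in \(\cR'\). The subscripts are what make the relation instance match: the evaluation point \((\Gamma''\mid_R\vec u)\vec\sigma\) is exactly a \(\vec\sigma'\) allowed in \eqref{eqn:relation_rew}. Independence of \(\alpha\)-renaming is immediate from the definition.

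Next we check that \(F\) respects \(U^{\vec X}\).
\begin{itemize}
\item The projection relations hold by the base case of \(\kappa\).
\item The chain-rule relation \(\partial_i(\omega\ab<\omega'_1,\dots,\omega'_k>)_{\vec\sigma} = \sum_j \partial_j(\omega)_{\ab<\omega'_j\vec\sigma>}\partial_i(\omega'_j)_{\vec\sigma}\) holds by part 1 of the preceding lemma, applied with representatives \(t\) of \(\omega\) and \(t'_j\) of \(\omega'_j\), since \(t\ab<t'_1,\dots,t'_k>\) represents the composite.
\end{itemize}

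Finally, \(F G = \Id\) holds on generators, because \(\kappa_i(f(x_1,\dots,x_n))_{\vec\sigma} = \partial_i(f)_{\vec\sigma}\) (using \(\ab<\pi_1,\dots,\pi_n>\vec\sigma=\vec\sigma\)). Also \(G F = \Id\) by the identity \(\partial_i(\Gamma\mid_R t)_{\vec\sigma} = G\kappa_i(t)_{\vec\sigma}\) established above. Therefore \(\cR_\varlaw^{\varobj} \cong \cR'\), which proves that \(\ab(Q^{\vec X}_{(\Sigma,R)}, U^{\vec X}_{(\Sigma,R)})\) is a presentation of \(\cR_\varlaw^{\varobj}\).
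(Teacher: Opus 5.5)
Your proposal is correct and is essentially the paper's argument. The paper also uses the inclusion of \(\bbZ Q^{\vec X}_{(\Sigma,R)}\) together with the inductive identity \(\partial_i(\Gamma\mid_R t)_{\vec\sigma} \equiv \kappa_i(\Gamma\mid_\emptyset t)_{\vec\sigma}\) modulo \(U^{\vec X}\). It builds the reverse map \(\mu\) from chosen representatives, deduces the chain rule from part 1 of the preceding lemma, and gets representative-independence from part 2 by induction along a \(\leftrightarrow_R\)-chain. Your version is slightly more explicit: you check the projection relations and both composites \(FG\) and \(GF\), which the paper leaves implicit.
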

\begin{proof}
Consider the inclusion \(\iota : \bbZ Q^{\vec X}_{(\Sigma,R)} \hookrightarrow \bbZ Q^{\vec X}\).
We can show that
\(\iota(\kappa_i(\Gamma \mid_\emptyset t)_{\vec\sigma}) = \partial_i(\Gamma \mid_R t)_{\vec\sigma}\) modulo \(U^{\vec  X}\) for any \(\Gamma \mid_\emptyset t\), by induction on the structure of \(t\).
So, for any rule \(\Gamma \mid_\emptyset l \to r \in R\), we have \(\iota(\kappa_i(\Gamma \mid_\emptyset l)_{\vec\sigma}) = \iota(\kappa_i(\Gamma \mid_\emptyset r)_{\vec\sigma})\) modulo \(U^{\vec X}\).
Also, every edge \(\partial_i(\omega)_{\vec \sigma}\) of \(Q^{\vec X}\)
is equal to \(\iota(\kappa_i(\Gamma \mid_\emptyset t)_{\vec\sigma})\) modulo \(U^{\vec X}\) if \(\omega = (\Gamma \mid_R t)\).
That is, we have an additive functor \(\mu : \bbZ Q^{\vec X} \to \bbZ Q^{\vec X}_{(\Sigma,R)}\) such that \(\bbZ Q^{\vec X} \xrightarrow{\mu} \bbZ Q^{\vec X}_{(\Sigma,R)} \xhookrightarrow{\iota} \bbZ Q^{\vec X}\) is the identity functor, and \(\mu\) depends on the choice of \((\Gamma \mid_\emptyset t)\) for each \(\omega = (\Gamma \mid_R t)\).
We fix such a choice arbitrarily and show
\begin{equation*}
\mu(\partial_i(\omega\ab<\omega'_1,\dots,\omega'_n>)_{\vec\sigma}) = \mu\ab(\sum_{j}\partial_j(\omega)_{\ab<\omega_1'\vec\sigma,\dots,\omega_n'\vec\sigma>}\partial_i(\omega'_i)_{\vec\sigma}) \mod U^{\vec X}_{(\Sigma,R)}
\end{equation*}
for any \(\omega\), \(\omega_1',\dots,\omega_n'\), \(\vec\sigma\), and \(i\).
Suppose that \((\Gamma \mid_\emptyset t)\), \((\Gamma' \mid_\emptyset t'_i)\), \((\Gamma' \mid_\emptyset s)\) are chosen for \((\Gamma \mid_R t) = \omega\), \((\Gamma' \mid_R t'_i) = \omega_i'\), \((\Gamma' \mid_R s) = \omega\ab<\omega'_1,\dots,\omega'_n>\).
Then, we have
\begin{align*}
\mu(\partial_i(\omega\ab<\omega'_1,\dots,\omega'_n>)_{\vec\sigma})
&= \kappa_i(\Gamma' \mid_\emptyset s)_{\vec\sigma},\\
\mu\ab(\sum_j\partial_j(\omega)_{\ab<\omega_1'\vec\sigma,\dots,\omega_n'\vec\sigma>}\partial_i(\omega'_i)_{\vec\sigma})
&= \sum_j \kappa_j(\Gamma \mid_\emptyset t)_{\ab<\omega_1'\vec\sigma,\dots,\omega_n'\vec\sigma>} \kappa_i(\Gamma' \mid_\emptyset t'_i)_{\vec\sigma}\\
&= \kappa_i(\Gamma' \mid_\emptyset t\ab<t_1',\dots,t_n'>)_{\vec\sigma}.
\end{align*}
So, it suffices to show
\begin{equation}\label{eqn:mu_rel}
\kappa_i(\Gamma' \mid_\emptyset s)_{\vec\sigma} = \kappa_i(\Gamma' \mid_\emptyset s')_{\vec\sigma} \mod U^{\vec X}_{(\Sigma,R)}
\end{equation}
for any \(s,s'\) with \(s \approx_R s'\).
Let \(\leftrightarrow_R\) be the symmetric closure of \(\to_R\).
Then, \(\approx_R\) is the reflexive transitive closure of \(\leftrightarrow_R\).
Since \(s \approx_R s'\),
there are terms \(s_0,\dots,s_N\) such that
\(s_0 = s \leftrightarrow_R s_1 \leftrightarrow_R \dots \leftrightarrow_R s_N = s'\).
We show \eqref{eqn:mu_rel} by induction on \(N\).
If \(N=0\), \eqref{eqn:mu_rel} is obvious since \(s = s'\).
Suppose \(N > 0\) and \(\kappa_i(\Gamma_0 \mid_\emptyset s)_{\vec\sigma} = \kappa_i(\Gamma_0 \mid_\emptyset s_{N-1})_{\vec\sigma}\).
Also, suppose \(s_{N-1} \to_R s_N = s'\).
The case \(s_N \to_R s_{N-1}\) can be shown similarly.
Then, \(s_{N-1}\) and \(s'\) can be written as \(s_{N-1} = u\ab<v_1,\dots,v_m,l>\vec w\) and \(s' = u\ab<v_1,\dots,v_m,r>\vec w\) for some rule \(\Gamma_1 \vdash l \to r \in R\) and term-in-contexts \(\Gamma_2 \vdash u\), \(\Gamma_1 \vdash v_1,\dots,v_m\), \(\Gamma_0\vdash \vec w\).
By the previous lemma, we have
\begin{align*}
&\kappa_i(\Gamma_0 \mid_\emptyset s_{N-1})_{\vec\sigma} - \kappa_i(\Gamma' \mid_\emptyset s')_{\vec\sigma}\\
&= \kappa_{m+1}(\Gamma \mid_\emptyset u)_{\ab<\tau_1,\dots,\tau_m,(\Gamma_1\mid_\emptyset l)>\vec\rho\vec\sigma}
\sum_j \ab( \kappa_j(\Gamma'' \mid_\emptyset l)_{\vec\rho\vec\sigma} - \kappa_j(\Gamma'' \mid_\emptyset r)_{\vec \rho\vec\sigma} )
\kappa_i(\Gamma' \mid_\emptyset w_j)_{\vec\sigma} \in \ab(U^{\vec X}_{(\Sigma,R)}).
\end{align*}
where \(\tau_i = (\Gamma_1\mid_\emptyset v_i)\) and \(\vec\rho = (\Gamma_0 \mid_\emptyset \vec w)\).
\end{proof}

We call coefficients of form \(\partial_{i_1}(f_1)_{\vec\sigma_1}\dots \partial_{i_m}(f_m)_{\vec\sigma_m}\) \emph{monomials}.

For a term \(t\) and a variable \(x\), let \(\#_x t\) be the number of occurrences of the variable \(x\) in \(t\).
The following is easy to show.
\begin{lemma}
For any \((x_1,\dots,x_n\mid_\emptyset t)\),
integer \(i= 1,\dots,n\), and morphism \(\vec\sigma\) in \(\varlaw\),
\(\kappa_i(x_1,\dots,x_n \mid_\emptyset t)_{\vec\sigma}\) is the sum of \(\#_{x_i}t\) monomials.
\end{lemma}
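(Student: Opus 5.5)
The plan is a structural induction on the term \(t\), following the recursive definition of \(\kappa_i\). The statement to carry through the induction is slightly stronger: \(\kappa_i(x_1,\dots,x_n \mid_\emptyset t)_{\vec\sigma}\), computed by the defining recursion in \(\bbZ Q^{\vec X}_{(\Sigma,R)}\), is a formal sum (with all coefficients \(+1\)) of exactly \(\#_{x_i}t\) monomials. These monomials are the paths \(\partial_{j_1}(f_1)_{\vec\sigma_1}\cdots\partial_{j_m}(f_m)_{\vec\sigma_m}\), indexed bijectively by the occurrences of \(x_i\) in \(t\). We work at the level of the free ringoid, so no cancellation modulo \(U^{\vec X}_{(\Sigma,R)}\) needs to be tracked. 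The empty path \(\Id_{\sigma_i}\) counts as the monomial with \(m=0\).

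For the base case \(t = x_j\), the definition gives \(\kappa_i = \Id_{\sigma_j}\) if \(i=j\) and \(0\) otherwise. This is one monomial when \(\#_{x_i}x_j = 1\) and the empty sum when \(\#_{x_i}x_j = 0\), as required.

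For the inductive step \(t = f(t_1,\dots,t_k)\), the defining recursion, read as intended with \(\kappa_i(\Gamma\mid_\emptyset t_j)\) in the \(j\)-th summand, gives
\(\kappa_i(\Gamma\mid_\emptyset t)_{\vec\sigma} = \sum_{j=1}^k \partial_j(f)_{\vec t\vec\sigma}\,\kappa_i(\Gamma\mid_\emptyset t_j)_{\vec\sigma}\).
By the induction hypothesis, each \(\kappa_i(\Gamma\mid_\emptyset t_j)_{\vec\sigma}\) is a sum of \(\#_{x_i}t_j\) monomials. Pre-composing a monomial with the single edge \(\partial_j(f)_{\vec t\vec\sigma}\) yields a longer path, so it is again a monomial. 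Distributivity of composition over sums in \(\bbZ Q\) then turns the \(j\)-th summand into a sum of \(\#_{x_i}t_j\) monomials. Summing over \(j\) and using \(\#_{x_i}t = \sum_j \#_{x_i}t_j\) completes the count.

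The only point needing care is how "the sum of \(N\) monomials" is to be read. It is a formal expression in the path basis, possibly with repeated terms, rather than a reduced element of \(\cR^{\vec X}_\varlaw\), where relations could merge or cancel terms. I would state this explicitly at the start. Once that is fixed, the argument is routine and no step is a real obstacle.
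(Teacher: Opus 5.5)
Your structural induction on \(t\) is correct. You read the recursion with \(\kappa_i(\Gamma\mid_\emptyset t_j)_{\vec\sigma}\) in the \(j\)-th summand, fix the conventions that the empty path is a monomial and that monomials are counted formally in \(\bbZ Q^{\vec X}_{(\Sigma,R)}\), and then run the induction. This is exactly the routine argument the paper intends when it calls the lemma ``easy to show'' (it gives no written proof), and the formal count is the reading the following lemma relies on.
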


\begin{definition}
The \emph{degree} of \(R\) is \(\gcd\{\#_xl - \#_x r \mid l \to r \in R, x \in V\}\).
\end{definition}

Suppose that \(R\) has degree \(d\).

\begin{lemma}
If there are \(k\) monomials \(D_1,\dots,D_k\) and \(k'\) monomials \(D_1',\dots,D_{k'}'\) such that \(D_1 + \dots + D_k - D_1' - \dots - D_{k'}' \in \ab(U^{\vec X}_{(\Sigma,R)})\), then \(k - k' \in d\bbZ\).
\end{lemma}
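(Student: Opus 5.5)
The plan is to construct an additive ``counting'' invariant on the free ringoid and show that it kills the ideal. Concretely, I will define an additive functor \(\epsilon : \bbZ Q^{\vec X}_{(\Sigma,R)} \to \bbZ/d\bbZ\). Here \(\bbZ/d\bbZ\) is viewed as a ringoid with a single object, or more simply as a family of abelian group maps on hom-groups compatible with composition. The functor sends every edge \(\partial_i(f)_{\vec\sigma}\) to \(1\). Since \(\bbZ Q\) is free on the quiver, \(\epsilon\) is determined by this choice, and every path, hence every monomial, goes to \(1\).

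A \(\bbZ\)-linear combination \(D_1+\dots+D_k-D_1'-\dots-D_{k'}'\) of monomials is then sent to \(k-k' \bmod d\). The statement therefore reduces to showing that \(\epsilon\) vanishes on the two-sided ideal \((U^{\vec X}_{(\Sigma,R)})\).

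Because \(\epsilon\) is multiplicative on composable morphisms and additive, its kernel contains \(r_1 u r_2\) whenever \(\epsilon(u)=0\). So it suffices to check that \(\epsilon\) vanishes on the generators \(\kappa_i(\Gamma\mid_\emptyset l)_{\vec\sigma}-\kappa_i(\Gamma\mid_\emptyset r)_{\vec\sigma}\) from \eqref{eqn:relation_rew}. There is one subtlety: the ideal is indexed over objects, and multiplicativity must be used in the form \(\epsilon(r_1ur_2)=\epsilon(r_1)\epsilon(u)\epsilon(r_2)\) in the ring \(\bbZ/d\bbZ\). This works because \(\epsilon\) is a functor to a one-object ringoid.

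By the preceding lemma, \(\kappa_i(x_1,\dots,x_n\mid_\emptyset t)_{\vec\sigma}\) is a sum of exactly \(\#_{x_i}t\) monomials, with all coefficients \(+1\). This can be seen from the recursive definition of \(\kappa\), which involves no signs. Hence
\[
\epsilon\bigl(\kappa_i(\Gamma\mid_\emptyset l)_{\vec\sigma}-\kappa_i(\Gamma\mid_\emptyset r)_{\vec\sigma}\bigr)=\#_{x_i}l-\#_{x_i}r \bmod d .
\]
This is \(0\) by the definition of the degree \(d\). One boundary case needs a remark: if \(x_i\notin\Var(l)\), then also \(x_i\notin\Var(r)\), so both counts vanish. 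If \(d=0\), I interpret \(\bbZ/0\bbZ=\bbZ\) and the same argument works verbatim.

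The main obstacle I expect is the well-definedness of \(\epsilon\) as a functor on the ringoid, that is, making sure that monomials really are paths in the free ringoid, so that each monomial maps to exactly \(1\) and \(\epsilon\) respects composition. This comes for free from the universal property of \(\bbZ Q\) for quivers: any assignment of edges to morphisms of the target ringoid extends uniquely. A second point to double-check is that the lemma counting monomials in \(\kappa_i\) gives genuine monomials with coefficient \(+1\), and not merely a sum that could involve cancellations. This follows by induction from the recursive definition of \(\kappa\), whose only constants are \(\Id\) and \(0\).
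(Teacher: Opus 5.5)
Your proposal is correct and is essentially the paper's argument. The paper writes the element as \(\sum_j \pm D_j''\bigl(\kappa_{i_j}(\Gamma_j\mid_\emptyset l_j)_{\vec\sigma_j}-\kappa_{i_j}(\Gamma_j\mid_\emptyset r_j)_{\vec\sigma_j}\bigr)D_j'''\) and compares monomial counts using \(\#_{x}l_j-\#_{x}r_j\in d\bbZ\); your additive functor \(\epsilon\) to \(\bbZ/d\bbZ\) is exactly the coefficient-sum map that this counting implicitly relies on, so you have just made the paper's ``therefore'' step explicit.
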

\begin{proof}
Any \(D_1 + \dots + D_k - D_1' - \dots - D_{k'}' \in \ab(U^{\vec X}_{(\Sigma,R)})\) can be written as
\[
\sum_j \pm D_j'' \ab(\kappa_{i_j}(\Gamma_j \mid_\emptyset l_j )_{\vec \sigma_j} - \kappa_{i_j}(\Gamma_j \mid_\emptyset r_j)_{\vec \sigma_j})D'''_j
\]
for some monomials \(D_j'',D'''_j\), rules \(\Gamma_j \mid_\emptyset l_j \to r_j \in R\), integers \(i_j\), and morphisms \(\vec \sigma_j\) in \(\varlaw\).
Therefore, we have \(k - k' = \sum_j \pm (\#_{i_j} l_j - \#_{i_j}r_j) \in d\bbZ\).
\end{proof}

We define a right \(\cU_\varlaw\)-module \(\cZ_d\) as follows.
\begin{lemma}
Let \(\cZ_d(\omega) = \bbZ/d\bbZ\) for any \(\omega \in \Ob(\cU_\varlaw)\),
and let
\(\cZ_d\ab( D\vec\alpha^* ) = \Id_{\bbZ/d\bbZ}\) for any monomial \(D\).
Then, \(\cZ_d\) extends to a unique right \(\cU_\varlaw\)-module.
\end{lemma}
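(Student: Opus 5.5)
The approach is to define the action first on a free ringoid and then check that it factors through the relations. We build $\cZ_d$ as a functor on a free ringoid mapping onto $\cU_\varlaw$, and show that it kills the kernel. Recall from \cref{eqn:alpha_star} that every morphism of $\cU_\varlaw$ is a finite sum of terms $D\vec\alpha^*$ with $D$ a monomial. By the preceding lemma, each $\cR_\varlaw^{\varobj}$ is presented by $(Q^{\vec X}_{(\Sigma,R)}, U^{\vec X}_{(\Sigma,R)})$. So a morphism $f\to g$ in $\cU_\varlaw$, i.e.\ an element of $\bigoplus_{\vec\alpha}\cR^{\varobj}_\varlaw(f\vec\alpha, g)$, is a family indexed by $\vec\alpha$ of classes of $\bbZ$-linear combinations of paths (monomials) in $Q^{\vec X}_{(\Sigma,R)}$.

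\textbf{Step 1: define $\cZ_d$ on morphisms.} For a morphism $r$ with components $r_{\vec\alpha}=\sum_j c_{j}D_{j}$, where $c_j\in\bbZ$ and the $D_j$ are monomials, set
\[
\cZ_d(r) \;=\; \Bigl(\sum_{\vec\alpha}\sum_j c_j\Bigr)\cdot \Id_{\bbZ/d\bbZ}.
\]
Equivalently, $\cZ_d$ is the additive ``augmentation'' sending every path to $1$. Uniqueness is then immediate: a right module is additive and $\cZ_d(D\vec\alpha^*)=\Id$ is prescribed, so since the $D\vec\alpha^*$ additively span all morphisms, $\cZ_d(r)$ is forced to be the displayed value.

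\textbf{Step 2: well-definedness (the main point).} Suppose two representatives $\sum_j D_j$ and $\sum_j D'_j$, counted with signs, of the same element of $\cR^{\varobj}_\varlaw(\sigma,\sigma')$ are given. Their difference $D_1+\dots+D_k-D'_1-\dots-D'_{k'}$ (after moving negative coefficients to the other side) lies in $(U^{\vec X}_{(\Sigma,R)})$. By the lemma just before the definition of $\cZ_d$, $k-k'\in d\bbZ$, so the two augmentations agree in $\bbZ/d\bbZ$. The direct-sum decomposition over $\vec\alpha$ causes no further identification. Hence $\cZ_d(r)$ is well defined and additive. I expect this to be the only real obstacle, and it is exactly what the degree lemma supplies.

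\textbf{Step 3: contravariant functoriality.} It suffices to check that the augmentation $\varepsilon(r)=\sum c_j\bmod d$ is multiplicative, $\varepsilon(r'\circ r)=\varepsilon(r')\varepsilon(r)$, with identities sent to $1$; since $\bbZ/d\bbZ$ is commutative, the order of composition is irrelevant. By bilinearity we reduce to $r=D\vec\alpha^*$ and $r'=D'\vec\beta^*$. Using \cref{eqn:alpha_star}(2), we have $\vec\beta^*D=\cR^{\vec\beta}_\varlaw(D)\,\vec\beta^*$, where $\cR^{\vec\beta}_\varlaw$ sends each generator $\partial_i(f)_{\vec\sigma}$ to the single generator $\partial_i(f)_{\vec\sigma\vec\beta}$, and so sends a monomial to a monomial. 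Therefore $r'\circ r$ is again a single monomial times $(\vec\alpha\vec\beta)^*$, with augmentation $1=1\cdot1$. The identity $1_f=\In_{1}(1_f)$ is the empty monomial and also has augmentation $1$. This yields the right $\cU_\varlaw$-module structure.
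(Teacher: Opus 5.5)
Your proposal is correct and takes essentially the same route as the paper. The paper also proves well-definedness by grouping a vanishing combination \(\sum_i \epsilon_i D_i\vec\alpha_i^*\) into its distinct \(\vec\beta_j\)-components and applying, to each component, the degree lemma (an element \(D_1+\dots+D_k-D_1'-\dots-D_{k'}'\) of \((U^{\vec X}_{(\Sigma,R)})\) has \(k-k'\in d\bbZ\)). Your Step 3, which gets multiplicativity from the fact that \(\vec\beta^*D=\cR^{\vec\beta}_\varlaw(D)\,\vec\beta^*\) sends monomials to monomials, and your uniqueness remark are both correct and make explicit what the paper leaves unstated.
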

\begin{proof}
We show that, for any morphism \(r = \sum_i \epsilon_i D_i \vec\alpha_i\) (\(\epsilon_i = \pm 1\)) in \(\cU_\varlaw\), if \(r=0\), then \(\sum_i \epsilon_i \Id_{\bbZ/d\bbZ} = 0\), i.e., \(\sum_i \epsilon_i \in d\bbZ\).
We can rewrite \(r\) to the form \(r = \sum_j D_j'\vec\beta^*_j\) where
\[
D_j' = D_{j,1}^+ + \dots + D_{j,k_j}^+ - D_{j,1}^- - \dots - D_{j,k'_j}^-,
\]
for monomials \(D_{j,1}^+,\dots,D_{j,k_j}^+,D_{j,1}^-,\dots,D_{j,k'_j}^-\) and
\(\vec\beta_j\neq \vec\beta_{j'}\) for any \(j \neq j'\).
If \(r = 0\), we have \(D_j \in \ab(U^{\vec X}_{(\Sigma,R)})\) for every \(j\), and it implies \(\sum_i \epsilon_i = \sum_j k_j - k'_j \in d\bbZ\) by the previous lemma.
\end{proof}

\begin{lemma}
For any free left \(\cU_\varlaw\)-module \(M\) generated by \(\mathcal{X} : \Ob(\cU_\varlaw) \to \catset\),
\(\cZ_d \otimes_{\cU_\varlaw} M\) is the free \(\bbZ/d\bbZ\)-module generated by \(\coprod \mathcal{X}\).
\end{lemma}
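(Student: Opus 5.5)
The plan is to reduce to the case of a single generator and then use the co-Yoneda lemma for coends. First, the tensor product $\cZ_d \otimes_{\cU_\varlaw} -$ is a left adjoint. Concretely, it is a coend, i.e.\ a cokernel of a map between direct sums, so it commutes with arbitrary direct sums of left $\cU_\varlaw$-modules. By the definition of the free module $F(\cX)$, we have
\[
M \cong \bigoplus_{j\in\Ob(\cU_\varlaw)} \bigoplus_{x\in\cX_j} \cU_\varlaw(j,-)\,x .
\]
It therefore suffices to show that $\cZ_d \otimes_{\cU_\varlaw} \cU_\varlaw(j,-) \cong \cZ_d(j) = \bbZ/d\bbZ$ for each object $j$, naturally, with the generator $x$ corresponding to $1 \in \bbZ/d\bbZ$. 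Summing over $j$ and $x\in\cX_j$ then gives the free $\bbZ/d\bbZ$-module on $\coprod_j \cX_j$.

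For the single-generator case, I would prove the additive co-Yoneda lemma directly from the cokernel description in the definition of the tensor product. Define
\[
\Phi:\bigoplus_i \cZ_d(i)\otimes_\bbZ \cU_\varlaw(j,i)\to \cZ_d(j), \qquad m\otimes r\mapsto \cZ_d(r)(m).
\]
By functoriality of the right module $\cZ_d$, this map kills the difference of the two parallel arrows, so it descends to the coend. Define the inverse $\Psi:\cZ_d(j)\to \cZ_d\otimes_{\cU_\varlaw}\cU_\varlaw(j,-)$ by $m\mapsto [m\otimes 1_j]$. Then $\Phi\Psi=\Id$ is immediate. For $\Psi\Phi=\Id$, the coend relation applied to the morphism $r: j\to i$ gives $[m\otimes r] = [m \otimes r\circ 1_j] = [\cZ_d(r)(m)\otimes 1_j]$. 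Thus every class equals $\Psi\Phi$ of itself, and the two maps are mutually inverse abelian group isomorphisms.

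Finally, I would check that the resulting group is a $\bbZ/d\bbZ$-module and that the identification is one of $\bbZ/d\bbZ$-modules. The $\bbZ/d\bbZ$-action on $\cZ_d\otimes M$ comes from the action on the left factor, since each $\cZ_d(i)=\bbZ/d\bbZ$ and every $\cZ_d(r)$ is $\bbZ/d\bbZ$-linear; this holds because the preceding lemma sets $\cZ_d(D\vec\alpha^*)=\Id$ for monomials, so each $\cZ_d(r)$ is multiplication by an integer. Hence $\Phi$ and $\Psi$ are $\bbZ/d\bbZ$-linear, and the generator $x$ maps to the basis element $1$. The main point needing care is the well-definedness of $\Phi$ on the coend. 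This is exactly the right-module functoriality $\cZ_d(r'\circ r)=\cZ_d(r)\circ\cZ_d(r')$, which was established in the previous lemma; everything else is formal.
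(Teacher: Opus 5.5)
Your proof is correct and follows essentially the paper's argument: the paper just asserts that \(1\otimes rx\mapsto x\) is an isomorphism, which is the co-Yoneda identification \(\cZ_d\otimes_{\cU_\varlaw}\cU_\varlaw(j,-)\cong\cZ_d(j)\) that you check in detail after splitting \(M\) into a direct sum of representables. Your map \(m\otimes r\mapsto \cZ_d(r)(m)\) is the precise general form of the paper's map; the paper's version, read literally, is only right when \(r\) is a monomial times some \(\vec\alpha^*\).
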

\begin{proof}
It is not difficult to show that the map \(1 \otimes rx \mapsto x\) gives an isomorphism.
\end{proof}

\begin{theorem}
Let \(R\) be a reduced and complete TRS with degree \(d\) such that \(d=0\) or \(d\) is prime. Then, we have
\begin{description}
	\item[(Weak Morse Inequality)] \(\#\Cr_n(\cM_{\varsign,R}) \ge s( H_n(\varlaw; \cZ_d)\))
	\item[(Strong Morse Inequality)] 
\(\sum_{i=0}^n (-1)^{n-i} \#\Cr_i(\cM_{\varsign,R}) \ge s(H_n(\varlaw; \cZ_d)) + \sum_{i=0}^{n-1} (-1)^{n-i} \mathrm{rank} H_i(\varlaw; \cZ_d)\)
\end{description}
where \(s(M)\) is the minimum number of generators of a \(\bbZ/d\bbZ\)-module \(M\).
\end{theorem}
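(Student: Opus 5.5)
We tensor the Anick resolution with $\cZ_d$ and then apply linear algebra over the ring $\bbZ/d\bbZ$.

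\textbf{The complex.} By the preceding theorem, $B^\cM_\bullet \to \kahlerdiffs_\varlaw$ is a free resolution. Each $B^\cM_n$ is free on $\Cr_n(\cM_{\varsign,R})$, the set of $n$-chains. Hence $H_n(\varlaw;\cZ_d)$ is the $n$-th homology of $C_\bullet = \cZ_d\otimes_{\cU_\varlaw} B^\cM_\bullet$. By the last lemma, $C_n$ is the free $\bbZ/d\bbZ$-module on $\Cr_n(\cM_{\varsign,R})$.

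The assumption that $d=0$ or $d$ is prime makes $K=\bbZ/d\bbZ$ either $\bbZ$ or the field $\mathbb{F}_d$. In both cases $K$ is a PID, and we interpret $\rank$ as the rank over $K$. If the chain sets are infinite both sides may be infinite, so we assume they are finite, which holds when $\varsorts,\varsign,R$ are finite.

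\textbf{Weak inequality.} Let $Z_n=\ker\partial_n$ and $B_n=\im\partial_{n+1}$. Over a PID, the submodule $Z_n\subseteq C_n$ is free with $\rank Z_n\le c_n:=\#\Cr_n$. Since $H_n=Z_n/B_n$ is a quotient of $Z_n$, we get $s(H_n)\le \rank Z_n\le c_n$.

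\textbf{Strong inequality.} We use the exact sequences
\[0\to Z_i\to C_i\to B_{i-1}\to 0 \quad\text{and}\quad 0\to B_i\to Z_i\to H_i\to 0.\]
All of $Z_i$ and $B_i$ are free, since they are submodules of free modules over a PID. Counting ranks gives
\[c_i = z_i + b_{i-1}, \qquad z_i = b_i + \rank H_i,\]
where $z_i=\rank Z_i$ and $b_i=\rank B_i$.

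Form the alternating sum $\sum_{i=0}^n(-1)^{n-i}c_i$ and substitute these relations. The $b_i$ terms telescope, and we obtain
\[\sum_{i=0}^n(-1)^{n-i}c_i = z_n + \sum_{i=0}^{n-1}(-1)^{n-i}\rank H_i,\]
using $b_{-1}=0$.

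It remains to show $z_n\ge s(H_n)$, which is the same bound used in the weak inequality. To see it, pick a basis $e_1,\dots,e_{z_n}$ of the free module $Z_n$. Their images generate the quotient $H_n$, so $s(H_n)\le z_n$.

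One point needs care: when $d$ is prime, $s$ and $\rank$ both equal the $\mathbb{F}_d$-dimension, and the argument above goes through unchanged. I expect no real obstacle. The one step to check is that the rank bookkeeping uses only freeness of submodules over a PID and additivity of rank on short exact sequences; both hold over $\bbZ$ and over $\mathbb{F}_d$. This is exactly why the hypothesis excludes composite $d$.
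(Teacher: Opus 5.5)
Your proposal is correct and follows essentially the same route as the paper. You tensor the resolution with $\cZ_d$ to get free $\bbZ/d\bbZ$-modules on the critical cells, bound $s(H_n)$ by the rank of the free submodule $\ker(C_n\to C_{n-1})$ using that $\bbZ/d\bbZ$ is a PID, and derive the strong inequality from the Euler-characteristic identity. Beyond that, you spell out the telescoping the paper only cites as a basic fact, your identity carries the correct indexing $\sum_{i=0}^{n}(-1)^{n-i}\#\Cr_i$ (the paper's displayed version has a typo), and you state the finiteness assumption needed for the alternating sum to make sense.
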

\begin{proof}
Let \(C_n = \cZ_d \otimes_{\cU_\varlaw} B_n^{\cM_{\Sigma,R}} \).
By the previous lemma, each \(C_n\) is a free \(\bbZ/d\bbZ\)-module generated by \(\Cr_n(\cM_{\Sigma,R})\).
Then, we have \(\#\Cr_n(\cM_{\Sigma,R}) = \rank(C_n) \ge \rank(\ker(C_n \to C_{n-1})) \ge s(H_n(\varlaw; \cZ_d))\) since \(\bbZ/d\bbZ\) is a PID.
Also, by a basic fact on Euler characteristics, we have
\[
\sum_{i=0}^{n-1} (-1)^{n} \#\Cr_i(\cM_{\varsign,R})
=\rank(\ker(C_n \to C_{n-1})) + \sum_{i=0}^{n-1} (-1)^{n-i} \rank(H_i(\varlaw; \cZ_d)),
\]
and since \(\rank(\ker(C_n \to C_{n-1})) \ge s(H_n(\varlaw; \cZ_d))\), we obtain the second inequality.
\end{proof}

\bibliographystyle{plain}
\bibliography{lawvere_morse}

\end{document}